\title[Phase separation for spherical biomembranes]{Domain formation via phase separation  for spherical biomembranes with small deformations}
\author[C. M. Elliott and L. Hatcher]{%
C.\ns M.\ns E\ls L\ls L\ls I\ls O\ls T\ls T$\,^1$,\ns
\and	
L.\ns H\ls A\ls T\ls C\ls H\ls E\ls R$\,^1$\ns
}
\affiliation{%
$^1\,$Mathematics Institute, Zeeman Building, University of Warwick, Coventry. CV4 7AL. UK\\
email\textup{\nocorr: \texttt{C.M.Elliott@warwick.ac.uk, L.Hatcher@warwick.ac.uk}}}
\date{2019}
\newtheorem{theorem}{Theorem}[section]
\newtheorem{Proposition}[theorem]{Proposition}
\newtheorem{Problem}[theorem]{Problem}
\DeclareMathOperator{\Tr}{Tr}
\def\Xint#1{\mathchoice
{\XXint\displaystyle\textstyle{#1}}%
{\XXint\textstyle\scriptstyle{#1}}%
{\XXint\scriptstyle\scriptscriptstyle{#1}}%
{\XXint\scriptscriptstyle\scriptscriptstyle{#1}}%
\!\int}
\def\XXint#1#2#3{{\setbox0=\hbox{$#1{#2#3}{\int}$ }
	\vcenter{\hbox{$#2#3$ }}\kern-.6\wd0}}
\def\dashint{\Xint-}
\numberwithin{equation}{section}
\newcommand{\spon}{H_s}	
\newcommand{\mc}{H}
\newcommand{\gc}{K}
\newcommand{\gbr}{\kappa_G}
\newcommand{\mbr}{\kappa}
\newcommand{\st}{\sigma}
\newcommand{\lt}{b}
\newcommand{\hf}{u}
\newcommand{\op}{\phi}
\newcommand{\cp}{\Lambda}
\DeclareMathOperator{\T}{T_\mathnormal{x}\Gamma}
\begin{document}
\label{firstpage}
\maketitle
\begin{abstract}
We derive and analyse an energy to model lipid raft formation on biological membranes involving a coupling between the local 
mean curvature  and the local composition. We apply a perturbation method recently introduced by Fritz, Hobbs and the 
first author to describe the geometry of the surface  as a 
graph over an undeformed Helfrich energy minimising surface. The result is a surface Cahn-Hilliard functional coupled with  a 
small deformation energy  We show that suitable minimisers 
of this energy exist and consider a gradient flow with conserved Allen-Cahn dynamics, for which existence and uniqueness 
results are proven. Finally, numerical simulations show that for the long time behaviour raft-like structures can  emerge and 
stablise, and their parameter dependence is further explored.
\end{abstract}
\begin{keywords}
65N30, 65J10, 35J35
\end{keywords}	

\section{Introduction}
Biological membranes are permeable barriers which separate cells from their exterior, and consist of various molecules such as proteins embedded within a lipid bilayer structure. They are of partiular mathematical interest since they can exhibit a variety of shape transition behaviour such as bud-formation or vessicle fission and fusion \cite{mcmahon2005membrane}. Following the pioneering works of Canham and Helfrich \cite{canham1970minimum,helfrich1973elastic}, the established continuum model treats the biomembrane as an infinitesimally thin deformable surface. The associated elastic bending energy (the so called Canham-Helfrich energy), accounting for possible surface tension is given by,
\begin{equation}
\label{eqn-helfrich}
\mathcal{E}(\Gamma):=\int_{\Gamma}\left(\frac{1}{2}\mbr(\mc-\spon)^2+\st+\gbr\gc\right){\rm d}\Gamma.
\end{equation}
Here $\Gamma=\partial\Omega$ is a two-dimensional hypersurface in $\mathbb{R}^3$ modelling the biomembrane and is given by the boundary of an open, bounded, connected set $\Omega\subset \mathbb{R}^3$. The parameters $\mbr>0$ and $\gbr>0$ are bending rigidities, $\spon$ is called the spontaneous curvature which is a measure of stress within the membrane for the flat configuration, $\mc$ is the mean curvature, $\gc$ is the Gauss curvature and $\st\geq 0$ is the surface tension.

Biomembranes consisting of multiple differing lipid types can undergo phase separation, forming a disordered phase where the lipid molecules can diffuse more freely and an ordered phase where the lipid molecules are more tightly packed together. A connected field of study with large academic interest (for example see \cite{bassereau2018physics}) involves the nature of membrane rafts, more commonly referred to as lipid rafts which were first introduced in \cite{simons1997functional}. These are small (10-200nm), relatively ordered domains which are enriched with cholestorol and sphingolipids and are understood to compartmentalise cellular processes such as signal transduction, protein sorting and are important for other mechanisms such as host-pathogen interactions \cite{pike2006rafts}. 

Since the size of these rafts are beyond the diffraction limit, direct microscopic observation has not been possible. Experimental resuls have been limited to observations on larger artificial membranes whose composition lack the complexity of biomembranes, or using alternative microscopy techniques such as fluorescence microscopy which alters the composition of the membrane. In both cases the \emph{in vivo} inferences drawn are questionable and the field has remained controversial \cite{sezgin2017mystery}. 
Since the dynamics and processes governing the formation and maintenance of lipid rafts are not well understood a number of explanations have been offered. One suggestion is that raft formation is driven by cholestorol pinning, and a model for this was recently proposed by Garcke et al. \cite{AbeKam19,garcke2016coupled}. In this paper  we consider whether the membrane geometry is a sufficient mechanism driving the formation of these rafts via protein interactions.

Experimental results on artificial membranes have shown there exists a correlation between the composition of the different phases and the local membrane curvature \cite{baumgart2003imaging,rinaldin2018geometric,parthasarathy2006curvature,hess2007shape}. Proteins are both able to sense whether the local environment matches their curvature preference, as well as in large enough numbers induce that curvature upon the membrane \cite{callan2013curvature}. Since proteins have a preference for raft type regions we consider here whether phase dependent material parameters offers a possible explanation for the domain formation observed. To that end we introduce an order parameter $\phi$, and consider the energy
\begin{equation}
\label{eqn-helfrich+tension}
\mathcal{E}(\Gamma):=\int_{\Gamma}\left(\frac{1}{2}\mbr(\phi)(\mc-\spon(\phi))^2+\st+\gbr(\phi)\gc+\frac{\lt\epsilon}{2}|\nabla_\Gamma\phi|^2+\frac{\lt}{\epsilon}W(\phi)\right){\rm d}\Gamma.
\end{equation}

The energy \eqref{eqn-helfrich+tension} is a modified version of \eqref{eqn-helfrich} where we have included a Ginzburg-Landau energy functional with  coefficient $\lt>0$, to incorporate the line tension between the two phases as well as making explicit the dependence of the bending rigidities and spontanerous curvature on the phasefield. Here $W(\op)$ is a smooth double well potential, with the local minimisers corresponding to the value $\phi$ takes in the respective phases, and $\epsilon>0$ is a small parameter commensurate with the width of the interface. 

An energy of this type was first proposed by Leibler \cite{leibler1986curvature}. In that case, the only material property taken to be dependent on the phase field was the spontaneous curvature, which was assumed to take the form
\begin{equation}
\label{eqn-leibler}
\spon(\phi)=\Lambda\phi,
\end{equation}
where $\Lambda\in\mathbb{R}$ is the curvature coefficient. 

An energy of the general form given in \eqref{eqn-helfrich+tension} was considered in \cite{elliott2010modeling,elliott2010surface,elliott2013computation} from  computational and formal asymptotics perspectives. The associated variational  problem is highly nonlinear and leads to a free boundary on a free boundary.  Other models have been suggested, such as in \cite{healey2017symmetry,wang2008modelling}. Here we utilise a recent pertubation approach for approximately spherical biomembranes introduced in \cite{elliott2016small}, in order to simplify \eqref{eqn-helfrich+tension}. This approach for flat domains using the Monge gauge approximation was considered in \cite{elliott2016variational}. The result is a variational problem on a fixed spherical domain.

In order to apply the above mentioned perturbation approach we make the following additional assumptions on \eqref{eqn-helfrich+tension}: the only material parameter that depends on the phase field is the spontaneous curvature, which we take to have the form given in \eqref{eqn-leibler}; we rescale the coefficient $\cp$ and replace by $\rho\cp$, and rescale $\lt$ and replace by $\rho^2\lt$; the volume of $\Gamma$ is fixed, as well as the integral of the phasefield. The justification for these assumptions is as follows: a spontanous curvature of this type corresponds to the simple assumption that the proteins induce a curvature proportional to their area concentration; the $\rho$ scaling of the spontaneous curvature induces order $\rho$ deformations of the surface,  the $\rho^2$ scaling is motivated since the line tension for lipid rafts has been calculated to depend quadratically on the spontaneous curvature \cite{kuzmin2005line}; the volume contraint corresponds to the impermeability of the membrane, and the order parameter constraint corresponds to a conservation of mass law on the embedded membrane proteins. After making these assumptions we obtain the following energy from \eqref{eqn-helfrich+tension}
\begin{equation}
\label{eqn-helfrich+tension+assumptions}
\mathcal{E}(\Gamma):=\int_{\Gamma}\left(\frac{1}{2}\mbr(\mc-\rho\Lambda\phi)^2+\st+\gbr\gc+\frac{\rho^2\lt\epsilon}{2}|\nabla_\Gamma\phi|^2+\frac{\rho^2\lt}{\epsilon}W(\phi)\right){\rm d}\Gamma,
\end{equation}
subject to a volume constraint and mean value constraint.

	We remark that in the case that $\Gamma$ is a closed hypersurface (without boundary), then the Gauss-Bonnet Theorem gives that
	\begin{equation}
	\int_{\Gamma}K=2\pi\chi(\Gamma),
	\end{equation}
	where $\chi(\Gamma)$ is the Euler characteristic of $\Gamma$. So in the case the material parameter $\gbr$ is independent of the phase field, then the Gauss curvature term can be dropped from \eqref{eqn-helfrich+tension+assumptions}.

The rest of the paper is set out as follows. In section 2 we briefly cover the notation and some preliminaries on surface calculus. In Section 3 we give the details of the perturbation approach alluded to above and derive an energy  that approximates \eqref{eqn-helfrich+tension}. In section 4 we prove that within a suitable space minimisers exist to this approximate energy. In section 5 we consider a gradient flow and prove existence and uniquness results for these equations, before finally in section 6 we conduct some numerical experiments.
\section{Notation and preliminaries}
Within this section we state the basic definitions and results for a two dimensional $C^2-$hypersurface $\Gamma$ which will be used throughout this paper. For a thorough treatment of the material covered here we refer the reader to \cite{dziuk2013finite}.

Given a  point $x\in\Gamma$, with unit normal $\nu$, an open subset $U$ of $\mathbb{R}^{3}$ containing $x$, and a function $u\in C^1(U)$ we define the tangential gradient of $u,\nabla_\Gamma u$ by
\begin{align}
\nabla_\Gamma u=\nabla u-(\nabla u\cdot \nu)\nu,
\end{align}
and denote it's components by
\begin{align}
\nabla_\Gamma u=(\underline{D}_1u,\underline{D}_2u,\underline{D}_3u).
\end{align}

We can also define the Laplace-Beltrami operator of $u$ at $x$  by
\begin{align}
\Delta_\Gamma u(x)=\sum_{i=1}^{3}\underline{D}_i\underline{D}_iu(x).
\end{align}

Denoting the tangent space of $\Gamma$ at $x$ by $\T$, we define the Weingarten map $\mathcal{H}:\T\to\T$ by
$\mathcal{H}:=\nabla_\Gamma \nu$  with eigenvalues given by the principle curvatures $\kappa_1$ and $\kappa_2$. The mean curvature is then given by
\begin{align}
H:=\Tr(\mathcal{H})=\kappa_1+\kappa_2,
\end{align}
which differs from the normal definition by a factor of 2. The Gauss curvature is then given by
\begin{align}
K:=\det(\mathcal{H})=\kappa_1\kappa_2.
\end{align}
We can consider the extended Weingarten map $\mathcal{H}:\mathbb{R}^3\to\T$ by defining $\mathcal{H}$ to have zero eigenvalue in the normal direction.

For $p\in[1,\infty)$ we define $L^p(\Gamma)$ to be the space of functions $u:L^p(\Gamma)\to\mathbb{R}$ which are measurable with respect to the surface measure $\mathrm{d}\Gamma$ and have finite norm
\begin{align}
\|u\|_{L^p(\Gamma)}=\left(\int_{\Gamma}^{}|u|^p \:\mathrm{d}\Gamma\right)^\frac{1}{p}.
\end{align}

We say a function $u\in L^1(\Gamma)$ has the weak derivative $v_i=\underline{D}_iu$, if for every function $\phi\in C^1(\Gamma)$ with compact support $\overline{\{x\in\Gamma:\phi(x)\neq 0\}}$$\subset\Gamma$ we have the relation
\begin{align}
\int_{\Gamma}^{}u\underline{D}_i\phi \:\mathrm{d}\Gamma=-\int_{\Gamma}^{}\phi v_i \:\mathrm{d}\Gamma+\int_{\Gamma}^{}u\phi H\nu_i\:\mathrm{d}\Gamma.
\end{align}

We define the Hilbert spaces $H^1(\Gamma)$ and $H^2(\Gamma)$ by
\begin{align}
H^1(\Gamma):&=\left\{f\in L^2(\Gamma):f\text{ has weak derivatives }\underline{D}_if\in L^2(\Gamma), i\in\{1,2,3\}\right\},
\\
H^2(\Gamma):&=\left\{f\in H^1(\Gamma):f\text{ has weak derivatives }\underline{D}_i\underline{D}_jf\in L^2(\Gamma), i,j\in\{1,2,3\}\right\},
\end{align}
\
with inner products given by
\begin{align}
(u,v)_{H^1(\Gamma)}&=\int_{\Gamma}\left(\nabla_\Gamma\cdot\nabla_\Gamma v+uv\right) \:\mathrm{d}\Gamma,
\\
(u,v)_{H^2(\Gamma)}&=\int_{\Gamma}\left(\Delta_\Gamma u\Delta_\Gamma v+\nabla_\Gamma\cdot\nabla_\Gamma v+uv\right) \:\mathrm{d}\Gamma.
\end{align}
We comment that the inner products given above are not the standard ones used, but the induced norms are equivalent to the usual norms in the case $\Gamma$ is a closed surface, see \cite{dziuk2013finite}. 

Integration by parts is then given by
\begin{theorem}
\label{By Parts}
Let $\Gamma$ be a bounded $C^2-$hypersurface (without boundary) and suppose $u\in H^1(\Gamma)$ and $v\in H^2(\Gamma)$. Then
\begin{align}
\int_{\Gamma}\nabla_\Gamma u\cdot\nabla_\Gamma v \:\:{\rm d}\Gamma=-\int_{\Gamma}u\Delta_\Gamma v\:\: {\rm d}\Gamma.
\end{align}
\end{theorem}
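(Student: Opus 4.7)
The plan is to apply the weak-derivative identity stated just before the theorem componentwise to $\nabla_\Gamma v$, sum the three resulting relations, and observe that the mean-curvature ``boundary'' contribution collapses because $\nabla_\Gamma v$ is pointwise orthogonal to the unit normal $\nu$.

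Concretely, since $v \in H^2(\Gamma)$ each tangential derivative $\underline{D}_i v$ lies in $H^1(\Gamma)$ with weak derivatives $\underline{D}_j \underline{D}_i v \in L^2(\Gamma)$. I would first establish the identity for test functions $\varphi \in C^1(\Gamma)$; note that since $\Gamma$ is compact and without boundary, the compact-support requirement in the definition of weak derivative is automatically satisfied by every $C^1$ function. Applying that definition with $f = \underline{D}_i v$ and test function $\varphi$ gives
$$\int_\Gamma (\underline{D}_i v)\, \underline{D}_i \varphi \, \mathrm{d}\Gamma = -\int_\Gamma \varphi\, \underline{D}_i \underline{D}_i v \, \mathrm{d}\Gamma + \int_\Gamma (\underline{D}_i v)\, \varphi\, H \nu_i \, \mathrm{d}\Gamma.$$
Summing over $i=1,2,3$ and using the definitions of $\nabla_\Gamma$ and $\Delta_\Gamma$ yields
$$\int_\Gamma \nabla_\Gamma v \cdot \nabla_\Gamma \varphi \, \mathrm{d}\Gamma = -\int_\Gamma \varphi\, \Delta_\Gamma v \, \mathrm{d}\Gamma + \int_\Gamma H \varphi\, (\nabla_\Gamma v \cdot \nu) \, \mathrm{d}\Gamma.$$
The final integrand vanishes identically because, by the very definition of the tangential gradient, $\nabla_\Gamma v$ is orthogonal to $\nu$; this is the step that replaces the usual boundary integral from Euclidean integration by parts.

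To upgrade from $\varphi \in C^1(\Gamma)$ to an arbitrary $u \in H^1(\Gamma)$, I would invoke the density of $C^1(\Gamma)$ in $H^1(\Gamma)$ for compact $C^2$ hypersurfaces (as in the Dziuk--Elliott reference cited in the excerpt). Choosing $u_n \in C^1(\Gamma)$ with $u_n \to u$ in $H^1(\Gamma)$, the left-hand side of the desired identity passes to the limit by Cauchy--Schwarz using $\nabla_\Gamma v \in L^2(\Gamma)$, and the right-hand side by Cauchy--Schwarz using $\Delta_\Gamma v \in L^2(\Gamma)$. I do not anticipate a genuine obstacle: the only step that is not purely formal bookkeeping is the vanishing of the curvature term, which for a closed surface is automatic by tangentiality, though on a surface with boundary it would instead leave a true boundary integral that would need separate treatment.
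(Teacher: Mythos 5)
Your proposal is correct. The paper itself offers no proof of this theorem: it is quoted as standard background, with the reader referred to Dziuk and Elliott's survey, and your argument is precisely the standard one found there — apply the defining weak-derivative relation componentwise with $f=\underline{D}_i v$, sum over $i$, observe that the curvature term carries the factor $\nabla_\Gamma v\cdot\nu$, and upgrade from $C^1$ test functions to $u\in H^1(\Gamma)$ by density, with no boundary term since $\Gamma$ is closed. The only point worth flagging is that ``$\nabla_\Gamma v\cdot\nu=0$ by the very definition'' is literally true for the classical tangential gradient, whereas for the weak gradient of an $H^2$ (or $H^1$) function it is obtained a.e.\ by the same density-and-uniqueness argument you already invoke (smooth approximations have tangential gradients, and $L^2$ limits preserve this), so the step is sound but rests on density rather than on the definition alone.
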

Finally, given a family of evolving hypersurfaces $(\Gamma(t))_{t\in[0,T]}$ and velocity $v:\mathcal{G}\to\mathbb{R}^3$ where $\mathcal{G}=\cup_{t\in[0,T]}(\Gamma(t)\times\{t\})$ we consider $(x_0,t_0)\in\mathcal{G}$ and denote by $\gamma:(t_0-\delta,t_0+\delta)\to\mathbb{R}^3$ the unique solution to the initial value problem
\begin{align}
\gamma^\prime(t)=v(\gamma(t),t),\quad\gamma(t_0)=x_0.
\end{align}  
Then for a function $f:\mathcal{G}\to\mathbb{R}$ we define the material time derivative by
\begin{align}
\partial^\bullet_t f(x_0,t_0):=\left.\frac{d}{dt}f(\gamma(t),t)\right|_{t=t_0}.
\end{align}
The transport theorem is then given by
\begin{theorem}[Transport Theorem]
	\label{Transport}
	Let $\Gamma(t)$ be an evolving surface with velocity field $v$. Then assuming that $f$ is a function such that all the following quantities exist, then
	\begin{align}
	\frac{d}{dt}\int_{\Gamma(t)}f\:{\rm d}\Gamma(t)=\int_{\Gamma(t)}\partial^\bullet_t f+f\nabla_\Gamma\cdot v\:{\rm d}\Gamma.
	\end{align}
\end{theorem}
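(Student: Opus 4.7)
The plan is to reduce the statement to ordinary differentiation under the integral by pulling the integral back to a time-independent domain. Concretely, fix a reference hypersurface $\Gamma_0=\Gamma(t_0)$ and, for each $y\in\Gamma_0$, let $\Phi_t(y)=\gamma(t)$ solve $\gamma'(t)=v(\gamma(t),t)$ with $\gamma(t_0)=y$. By standard ODE theory the flow map $\Phi_t:\Gamma_0\to\Gamma(t)$ is a $C^1$ diffeomorphism for $|t-t_0|$ small. Writing $J(y,t)$ for the surface Jacobian of $\Phi_t$ at $y$, the change-of-variables formula yields
\begin{equation*}
\int_{\Gamma(t)}f(x,t)\,{\rm d}\Gamma(t)=\int_{\Gamma_0}f(\Phi_t(y),t)\,J(y,t)\,{\rm d}\Gamma_0(y),
\end{equation*}
so the left-hand side is now an integral over a fixed surface and the time derivative can be taken under the integral sign.

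Next I would differentiate the integrand pointwise. The chain rule together with the definition of the material derivative gives
\begin{equation*}
\frac{d}{dt}f(\Phi_t(y),t)=\partial^{\bullet}_tf(\Phi_t(y),t),
\end{equation*}
so the task reduces to identifying $\partial_tJ(y,t)$. The key geometric identity to establish is
\begin{equation*}
\partial_tJ(y,t)=\bigl(\nabla_\Gamma\cdot v\bigr)(\Phi_t(y),t)\,J(y,t).
\end{equation*}
This is where the real work lies and is the step I expect to be the main obstacle. I would prove it by choosing a local parametrisation $\psi:U\subset\mathbb{R}^2\to\Gamma_0$, writing $J(y,t)=\sqrt{\det g(u,t)}/\sqrt{\det g(u,t_0)}$ where $g_{ij}(u,t)=\partial_i(\Phi_t\circ\psi)\cdot\partial_j(\Phi_t\circ\psi)$ is the pulled-back metric, and using Jacobi's formula $\partial_t\det g=\det g\,\Tr(g^{-1}\partial_tg)$. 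One then computes $\partial_tg_{ij}=\partial_iv\cdot\partial_j\psi+\partial_i\psi\cdot\partial_jv$ (evaluated along the flow), and recognises the resulting trace $g^{ij}\partial_i\psi\cdot\partial_jv$ as the coordinate expression of the intrinsic divergence $\nabla_\Gamma\cdot v$, after noting that only the tangential components of $v$ contribute to the $g^{ij}$-contraction beyond the mean-curvature normal term, which combine to give the full tangential divergence $\underline{D}_1v^1+\underline{D}_2v^2+\underline{D}_3v^3$.

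Combining the two computations and pushing forward back to $\Gamma(t)$ via $\Phi_t$,
\begin{equation*}
\frac{d}{dt}\int_{\Gamma(t)}f\,{\rm d}\Gamma(t)=\int_{\Gamma_0}\bigl[\partial^{\bullet}_tf+f\,\nabla_\Gamma\cdot v\bigr]\bigl(\Phi_t(y),t\bigr)\,J(y,t)\,{\rm d}\Gamma_0(y)=\int_{\Gamma(t)}\bigl(\partial^{\bullet}_tf+f\,\nabla_\Gamma\cdot v\bigr)\,{\rm d}\Gamma,
\end{equation*}
which is the claimed formula. The regularity hypotheses in the theorem (``all the following quantities exist'') are precisely those needed to justify differentiation under the integral and to make sense of $\partial^{\bullet}_tf$ and $\nabla_\Gamma\cdot v$ pointwise almost everywhere.
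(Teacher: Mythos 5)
Your proof is correct: the paper states this transport theorem as a standard preliminary without giving a proof (citing the Dziuk--Elliott survey), and your argument --- pulling back to a reference surface via the flow map and proving $\partial_t J=(\nabla_\Gamma\cdot v)\,J$ by Jacobi's formula for $\det g$ --- is exactly the standard proof found in that reference. One small point of precision: the contraction $g^{ij}\,\partial_i\Psi\cdot\partial_j(v\circ\Phi_t)$ is already the coordinate expression of the full tangential divergence $\nabla_\Gamma\cdot v$ (the normal part of $v$ entering through the term $(v\cdot\nu)H$), so no separate tangential/normal bookkeeping is needed at that step.
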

\section{Derivation of Model}
In this section we apply the perturbation approach detailed in \cite{elliott2016small} in order to obtain an approximate energy to \eqref{eqn-helfrich+tension}. We first consider the Lagrangian 
\begin{align}
\label{eqn-Lagrangian}
\mathcal{L}(\Gamma,\lambda):=\mbr\mathcal{W}(\Gamma)+\sigma\mathcal{A}(\Gamma)+\lambda(\mathcal{V}(\Gamma)-V_0).
\end{align}
where
\begin{align}
\mathcal{W}(\Gamma)&=\int_{\Gamma}\frac{1}{2}H^2\:{\rm d}\Gamma,&
\mathcal{A}(\Gamma)&=\int_{\Gamma}1\:{\rm d}\Gamma,&
\mathcal{V}(\Gamma)&=\int_{\Gamma}\frac{1}{3}\text{Id}_\Gamma\cdot\nu\:{\rm d}\Gamma.
\end{align}
 Here $\mathcal{W}$ denotes the Willmore energy, $\mathcal{A}$ the area functional and $\mathcal{V}$  the enclosed volume.  Since the Willmore energy is scale invariant and  the area is not, the volume is constrained using a Lagrange multiplier $\lambda$ and with fixed volume $V_0$. In addition $\mathcal{A}(\Gamma)$ and $\mathcal{V}(\Gamma)$ must satisfy the isoperimetric inequality
 \begin{equation}
 \mathcal{A}^3(\Gamma)\geq 36\pi \mathcal{V}^2(\Gamma).
 \end{equation}

In \cite{elliott2016small} it was shown that \eqref{eqn-Lagrangian} has a critical point $(\Gamma_0,\lambda_0)$, where $\Gamma_0=S(0,R)$, the sphere of radius $R$ centred at the origin and $\lambda_0=-\frac{2\sigma}{R}$. Applying a small forcing term $\rho\mathcal{F}$ we expect a critical point of the perturbed Lagrangian
\begin{align}
\label{eqn-lagrangian}
\mathcal{L}_\rho(\Gamma,\phi,\lambda,\mu):=&\mbr\mathcal{W}(\Gamma)+\sigma\mathcal{A}(\Gamma)+\lambda(\mathcal{V}(\Gamma)-V_0)
+\rho\mathcal{F}(\Gamma,\phi,\mu).
\end{align}
to be of the form $(\Gamma_\rho,\phi_\rho,\lambda_\rho,\mu_\rho)$ where $\Gamma_\rho$ and $\lambda_\rho$ are perturbations given by
\begin{align}
\Gamma_\rho&=\{p+\rho(\hf\nu_0)(p):p\in\Gamma_0\},\\
\lambda_\rho&=\lambda_0+\rho\lambda_1,
\label{eqn-deform}
\end{align}
of the critical point $(\Gamma_0,\lambda_0)$ for the non-perturbed Lagrangian $\mathcal{L}$.
Here $\nu_0$ is the unit normal to $\Gamma_0$, $\rho\in\mathbb{R}$ such that $\rho\ll 1$ and $\hf\in C^2(\Gamma,\mathbb{R})$ is the height function that describes the deformation.

Since $(\Gamma_\rho,\phi_\rho,\lambda_\rho,\mu_\rho)$ is a critical point it follows that
\begin{align}
\label{Lagrangian2}
\begin{cases}
\left.\frac{d}{ds}\mathcal{L}_\rho(\Gamma_\rho,\phi_\rho,\lambda_\rho,\mu_\rho+s\zeta)\right|_{s=0}=0&\qquad\forall\zeta\in\mathbb{R},
\\
\left.\frac{d}{ds}\mathcal{L}_\rho(\Gamma_\rho,\phi_\rho,\lambda_\rho+s\xi,\mu_\rho)\right|_{s=0}=0&\qquad\forall\xi\in\mathbb{R},
\\
\left.\frac{d}{ds}\mathcal{L}_\rho(\Gamma_\rho,\phi_\rho+s\eta,\lambda_\rho,\mu_\rho)\right|_{s=0}=0&\qquad\forall\eta\in C^1(\Gamma_\rho),
\\
\left.\frac{d}{ds}\mathcal{L}_\rho(\Gamma_\rho^s,\phi_\rho,\lambda_\rho,\mu_\rho)\right|_{s=0}=0&\qquad\forall g\in C^2(\Gamma_\rho),
\end{cases}
\end{align}
where $\Gamma_\rho^s:=\left\{x+sg\nu_\rho(x):x\in\Gamma_\rho\right\}$ and $\nu_\rho$ is the unit normal to $\Gamma_\rho$.

We apply this perturbation method for the case that the forcing term $\mathcal{F}$ is given by
\begin{align}
\mathcal{F}(\Gamma,\phi,\mu)=\mathcal{F}_1(\Gamma,\phi)+\rho\mathcal{F}_2(\Gamma,\phi)+\mu(\mathcal{C}(\Gamma,\phi)-\alpha),
\end{align}
where
\begin{align}
\mathcal{F}_1(\Gamma,\phi)&:=-\int_{\Gamma}H\phi \:{\rm d}\Gamma,&
\mathcal{F}_2(\Gamma,\phi)&:=\int_{\Gamma}\lt\left(\frac{\epsilon}{2}|\nabla_\Gamma\op|^2+\frac{1}{\epsilon} W(\phi)+\frac{\mbr\cp^2\op^2}{2\lt}\right)\:{\rm d}\Gamma,
\end{align}
are two forcing terms obtained from \eqref{eqn-helfrich+tension+assumptions} and $\mu$ is a Lagrange multiplier for the mean value constraint functional
\begin{align}
\mathcal{C}(\Gamma,\phi)&:=\dashint_{\Gamma}\phi \:{\rm d}\Gamma=\alpha.
\end{align}

Since we are interested in doing a Taylor approximation of \eqref{eqn-lagrangian}, we need to calculate the first and second variations of some of the energy functionals above. We remark that in our case when determing the second variation it is sufficient to find the first variation of the first variation, although in general this need not be the case, see Remark 3.2 in \cite{elliott2016small}. 


We first state the following results, proofs of which can be found in the appendix of \cite{elliott2016small}.

\begin{align}
\mathcal{W}^\prime(\Gamma_0)[u\nu_0]&=0,
&
\mathcal{W}^{\prime\prime}(\Gamma_0)[u\nu_0,u\nu_0]&=\int_{\Gamma_0}\left((\Delta_{\Gamma_0}\hf)^2-\frac{2}{R^2}|\nabla_{\Gamma_0} \hf|^2\right)\:{\rm d}\Gamma_0,
\\
\mathcal{V}^\prime(\Gamma_0)[u\nu_0]&=\int_{\Gamma_0} \hf\:{\rm d}\Gamma_0,&
\mathcal{V}^{\prime\prime}(\Gamma_0)[u\nu_0,u\nu_0]&=\int_{\Gamma_0} H_0\hf^2\:{\rm d}\Gamma_0,
\\
\mathcal{A}^\prime(\Gamma_0)[u\nu_0]&=\int_{\Gamma_0} H_0\hf\:{\rm d}\Gamma_0,&
\mathcal{A}^{\prime\prime}(\Gamma_0)[u\nu_0,u\nu_0]&=\int_{\Gamma_0}\left( |\nabla_{\Gamma_0}\hf|^2+\frac{2\hf^2}{R^2}\right)\:{\rm d}\Gamma_0,
\end{align}
where we have denoted the mean curvature on $\Gamma_0$ and $\Gamma_\rho$ by $H_0$ and $H_\rho$ respectively. Similarly we will denote the extended Weingarten map on $\Gamma_0$ and $\Gamma_\rho$ by $\mathcal{H}_0$ and $\mathcal{H}_\rho$. For ease of notation we will also write $\tau_0=\left.\tau_\rho\right|_{\rho=0}$ and $\tau_1=\left.\partial^\bullet_\rho\tau_\rho\right|_{\rho=0}$ where $\tau$ is a placeholder for $\phi$ and $\mu$. 

It will be sufficient for our purposes to additionally only calculate the first variation of $\mathcal{F}(\Gamma,\phi,\mu)$,
\begin{align}
\begin{split}
\mathcal{F}^\prime(\Gamma_0,\phi_0,\mu_0)[u\nu,\phi_1,\mu_1]=&\mathcal{F}_1^\prime(\Gamma_0,\phi_0)[u\nu,\phi_1]+\mathcal{F}_2(\Gamma_0,\phi_0)
\\
&+\mu_1\left(\mathcal{C}(\Gamma_0,\phi_0)-\alpha\right)+\mu_0\mathcal{C}^\prime(\Gamma_0,\phi_0)[u\nu,\phi_1],
\end{split}
\end{align} 
which amounts to calculating the first variation of $\mathcal{F}_1(\Gamma,\phi)$ and $\mathcal{C}(\Gamma,\phi)$.	By applying Theorem \ref{Transport} and using that $\partial^\bullet_\rho H_\rho=-\Delta_{\Gamma_\rho} u-|\mathcal{H}_\rho|^2 u$ (see Corollary A.1 in \cite{elliott2016small}) we obtain
	\begin{align}
	\begin{split}
	\left.\frac{d}{d\rho}\mathcal{F}_1(\Gamma_\rho,\phi_\rho)\right|_{\rho=0}&=-\left.\int_{\Gamma_\rho}\partial^\bullet_\rho(H_\rho\phi_\rho)+H_\rho\phi_\rho\nabla_{\Gamma_\rho}\cdot(\hf\nu_\rho)\:{\rm d}\Gamma_\rho\right|_{\rho=0}
	\\
	&=\int_{\Gamma_0}\phi_0 \Delta_{\Gamma_0}\hf+\phi_0|\mathcal{H}_0|^2\hf-H_0\phi_1- H_0^2\phi_0\hf\:{\rm d}\Gamma_0,
	\end{split}
	\end{align}
	and hence using that $|\mathcal{H}_0|^2=\frac{2}{R^2}$ and $H_0=\frac{2}{R}$ gives that,
	\begin{align}
	\mathcal{F}_1^\prime(\Gamma_0,\phi_0)[u\nu_0,\phi_1]=\int_{\Gamma_0}\phi_0\left(\Delta_{\Gamma_0}\hf-\frac{2\hf}{R^2}\right)-\frac{2\phi_1}{R}\:{\rm d}\Gamma_0.
	\end{align}
	
	Similarly we obtain
	\begin{align}
	\begin{split}
	\mathcal{C}^\prime(\Gamma_0,\phi_0)[u\nu_0,\phi_1]&=\left.\dashint_{\Gamma_0}\phi_1+\phi_0\nabla_\Gamma\cdot(\hf\nu_0)\:{\rm d}\Gamma_0\right.-\frac{\left.\frac{d}{d\rho}\int_{\Gamma_\rho}1\:{\rm d}\Gamma_\rho\right|_{\rho=0}}{\int_{\Gamma_0}1\:{\rm d}\Gamma_0}\dashint_{\Gamma_0}\phi_0\:{\rm d}\Gamma_0
	\\
	&=\dashint_{\Gamma_0}\left(\phi_1+\frac{2\phi_0u}{R}\right)\:{\rm d}\Gamma_0-\frac{2}{R}\left(\dashint_{\Gamma_0}\hf\:{\rm d}\Gamma_0\right)\left(\dashint_{\Gamma_0}\phi_0\:{\rm d}\Gamma_0\right).
	\end{split}
	\end{align}
	We can determine $\mu_0$ explicitly since from \eqref{Lagrangian2} we have that
	\begin{align}
	\frac{d}{ds}\rho\mathcal{F}(\Gamma_\rho,\phi_\rho+s\eta,\mu_\rho)=0,
	\end{align}
	and therefore
	\begin{align}
	\mathcal{F}_1(\Gamma_0,\eta)+\mu_0\mathcal{C}(\Gamma_0,\eta)=0,
	\end{align}
	from which we obtain that $\mu_0=\frac{2|\Gamma_0|}{R}$.
	It therefore follows that
	\begin{align}
	\nonumber
	\mathcal{F}^\prime(\Gamma_0,\phi_0,\mu_0)[u\nu,\phi_1,\mu_1]=&\int_{\Gamma_0}\left[\phi_0\Delta_{\Gamma_0}u+\frac{2\phi_0u}{R^2}+\frac{b\epsilon}{2}|\nabla_{\Gamma_0}\phi_0|^2+\frac{b}{\epsilon}W(\phi_0)+\frac{\kappa\Lambda^2\phi_0^2}{2}\right]\:{\rm d}\Gamma_0,
	\end{align}
	where above we have also used the linearised Lagrange multiplier constraints
	\begin{align}
	\int_{\Gamma_0}u\:{\rm d}\Gamma_0&=0&\dashint_{\Gamma_0}\phi_0\:{\rm d}\Gamma_0&=\alpha
	\end{align}
	which are obtained from \eqref{Lagrangian2}.
	 
We can now prove the following result.
\begin{theorem}
	\label{Taylor}
	With the assumptions given above it follows that
	\begin{align}
	\mathcal{L}_\rho(\Gamma_\rho,\phi_\rho,\lambda_\rho,\mu_\rho)=C_1+\rho C_2+\rho^2\mathcal{E}(\phi_0,\hf)+\mathcal{O}(\rho^3),
	\end{align}
	where
	\begin{align}
	\label{eqn-peturb-energy}
	\begin{split}
	\mathcal{E}(\phi_0,\hf):=&\int_{\Gamma_0}\frac{\kappa}{2}(\Delta_{\Gamma_0}\hf)^2+\frac{1}{2}\left(\sigma-\frac{2\kappa}{R^2}\right)|\nabla_{\Gamma_0}\hf|^2-\frac{\sigma \hf^2}{R^2}
	\\
	&\quad+\kappa\Lambda\phi_0\Delta_{\Gamma_0}\hf+\frac{2\kappa\Lambda \hf\phi_0}{R^2}+\frac{b\epsilon}{2}|\nabla_{\Gamma_0}\phi_0|^2+
	\frac{b}{\epsilon}W(\phi_0)+\frac{\kappa\Lambda^2\phi_0^2}{2}\:{\rm d}\Gamma_0
	\end{split}
	\end{align}
	for $C_1$ and $C_2$ constant.
\end{theorem}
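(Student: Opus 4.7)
The strategy is to view $\rho\mapsto\mathcal{L}_\rho(\Gamma_\rho,\phi_\rho,\lambda_\rho,\mu_\rho)$ as a scalar function of $\rho$, Taylor expand to second order about $\rho=0$, and identify the coefficients of $\rho^0$, $\rho^1$ and $\rho^2$ with $C_1$, $C_2$ and $\mathcal{E}(\phi_0,u)$ respectively. Using $\lambda_\rho=\lambda_0+\rho\lambda_1$, $\phi_\rho=\phi_0+\rho\phi_1+\mathcal{O}(\rho^2)$, $\mu_\rho=\mu_0+\rho\mu_1+\mathcal{O}(\rho^2)$ together with the deformation $\Gamma_\rho$, each of $\mathcal{W}$, $\mathcal{A}$, $\mathcal{V}$, $\mathcal{F}_1$, $\mathcal{F}_2$ and $\mathcal{C}$ becomes smooth in $\rho$, so the chain rule combined with the first and second variations recorded above produces each coefficient, and Taylor's theorem supplies the $\mathcal{O}(\rho^3)$ remainder.

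At order zero, the volume constraint $\mathcal{V}(\Gamma_0)=V_0$ annihilates the Lagrange contribution and only the geometric functionals survive, yielding the constant $C_1=\kappa\mathcal{W}(\Gamma_0)+\sigma\mathcal{A}(\Gamma_0)$. The order-one coefficient assembles as $\kappa\mathcal{W}'_0+\sigma\mathcal{A}'_0+\lambda_0\mathcal{V}'_0+\mathcal{F}_1(\Gamma_0,\phi_0)+\mu_0(\mathcal{C}(\Gamma_0,\phi_0)-\alpha)$; here $\mathcal{W}'_0[u\nu_0]=0$, the identity $\sigma\mathcal{A}'_0[u\nu_0]+\lambda_0\mathcal{V}'_0[u\nu_0]=(2\sigma/R+\lambda_0)\int_{\Gamma_0}u$ vanishes by the choice $\lambda_0=-2\sigma/R$, and the constraint $\mathcal{C}(\Gamma_0,\phi_0)=\alpha$ kills the final term. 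What survives is $\mathcal{F}_1(\Gamma_0,\phi_0)=-(2/R)\int_{\Gamma_0}\phi_0=-2\alpha|\Gamma_0|/R$, a constant under the mean-value constraint which is identified as $C_2$.

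The second-order coefficient is the main content. Collecting $\rho^2$ contributions gives
\begin{equation*}
\tfrac{\kappa}{2}\mathcal{W}''_0+\tfrac{\sigma}{2}\mathcal{A}''_0+\tfrac{\lambda_0}{2}\mathcal{V}''_0+\lambda_1\mathcal{V}'_0+\mathcal{F}_1'(\Gamma_0,\phi_0)[u\nu_0,\phi_1]+\mathcal{F}_2(\Gamma_0,\phi_0)+\mu_0\mathcal{C}'(\Gamma_0,\phi_0)[u\nu_0,\phi_1],
\end{equation*}
all $\mu_1$ contributions being annihilated by Leibniz against $\mathcal{C}(\Gamma_0,\phi_0)-\alpha=0$. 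Inserting the explicit $\mathcal{W}''_0$, $\mathcal{A}''_0$, $\mathcal{V}''_0$, using $\lambda_0=-2\sigma/R$, and invoking the linearised volume constraint $\int_{\Gamma_0}u=0$ (which annihilates $\lambda_1\mathcal{V}'_0$) reproduces the purely $u$-dependent terms of $\mathcal{E}$. The expected main obstacle is the delicate cancellation of the $\phi_1$-linear parts of $\mathcal{F}_1'$ and $\mu_0\mathcal{C}'$; this is forced precisely by $\mu_0=2|\Gamma_0|/R$, the value fixed by the zeroth-order variational equation in $\phi$. After this cancellation only the $(u,\phi_0)$ coupling $\phi_0\Delta_{\Gamma_0}u+2\phi_0 u/R^2$ persists, and $\mathcal{F}_2(\Gamma_0,\phi_0)$ supplies the Cahn-Hilliard contribution together with the $\Lambda^2\phi_0^2/2$ term, completing the identification of $\mathcal{E}(\phi_0,u)$.
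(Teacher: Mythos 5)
Your proposal is correct and follows essentially the same route as the paper: a second-order Taylor expansion of $\rho\mapsto\mathcal{L}_\rho(\Gamma_\rho,\phi_\rho,\lambda_\rho,\mu_\rho)$, with the zeroth- and first-order coefficients reduced to constants via the criticality of $(\Gamma_0,\lambda_0)$ and the volume and mean-value constraints, and the $\rho^2$ coefficient assembled from the listed first and second variations together with $\mathcal{F}_2(\Gamma_0,\phi_0)$, the linearised constraint $\int_{\Gamma_0}\hf=0$, and the value $\mu_0=2|\Gamma_0|/R$ that cancels the $\phi_1$-linear contributions. The only cosmetic difference is that you track the $\kappa\Lambda$ prefactor on $\mathcal{F}_1$ slightly differently than the paper does, which does not affect the identification of $\mathcal{E}(\phi_0,\hf)$.
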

\begin{proof}
	We wish to apply Taylor's Theorem so that we can obtain a good approximation to the perturbed Langrangian $\mathcal{L}_\rho(\Gamma_\rho,\phi_\rho,\lambda_\rho,\mu_\rho)$. Performing a second order Taylor expansion in $\rho$ we obtain that
	\begin{align}
	\begin{split}
	\mathcal{L}_\rho(\Gamma_\rho,\phi_\rho,\lambda_\rho,\mu_\rho)=&\mathcal{L}_0(\Gamma_0,\phi_0,\lambda_0,\mu_0)+\rho\left.\frac{d}{d\rho}\mathcal{L}_\rho(\Gamma_\rho,\phi_\rho,\lambda_\rho,\mu_\rho)\right|_{\rho=0}
	\\
	&+\frac{\rho^2}{2}\left.\frac{d^2}{d\rho^2}\mathcal{L}_\rho(\Gamma_\rho,\phi_\rho,\lambda_\rho,\mu_\rho)\right|_{\rho=0}+\mathcal{O}(\rho^3).
	\end{split}
	\end{align}
	We first observe that $\mathcal{L}_0(\Gamma_0,\phi_0,\lambda_0,\mu_0)=\kappa\mathcal{W}(\Gamma_0)+\sigma\mathcal{A}(\Gamma_0)$. For the second term we use that $(\Gamma_0,\lambda_0)$ is a critical point of $\mathcal{L}$ and obtain that
	\begin{align}
	\left.\frac{d}{d\rho}\mathcal{L}_\rho(\Gamma_\rho,\phi_\rho,\lambda_\rho,\mu_\rho)\right|_{\rho=0}&=\kappa\Lambda\mathcal{F}_1(\Gamma_0,\phi_0)=-\frac{2\kappa\Lambda}{R}\int_{\Gamma_0}\phi_0\:{\rm d}\Gamma_0=-8\kappa\Lambda\pi R\alpha.
	\end{align}
	We therefore see that the second order term is the lowest order term which depends on any of the variables. It remains to determine the form of this second order term. To do this we write
	\begin{align}
	\begin{split}
	\left.\frac{d^2}{d\rho^2}\mathcal{L}_\rho(\Gamma_\rho,\phi_\rho,\lambda_\rho,\mu_\rho)\right|_{\rho=0}=&\mbr\mathcal{W}^{\prime\prime}(\Gamma_0)[\hf\nu_0,\hf\nu_0]+\sigma\mathcal{A}^{\prime\prime}(\Gamma_0)[\hf\nu_0,\hf\nu_0]+\lambda_0\mathcal{V}^{\prime\prime}(\Gamma_0)[\hf\nu_0,\hf\nu_0]
	\\
	&+2\lambda_1\mathcal{V}^\prime(\Gamma_0)[\hf\nu_0]+2\mathcal{F}^\prime(\Gamma_0,\phi_0,\mu_0)[u\nu_0,\phi_1,\mu_1]
	\\
	=&2\mathcal{E}(\phi_0,u),
	\end{split}
	\end{align}
	as required.
\end{proof}
We note that formally taking $R\to\infty$ in \eqref{eqn-peturb-energy} we obtain the approximation given in \cite{leibler1986curvature} and more recently considered in \cite{fonseca2016domain} for a flat domain. It is this energy which we will study in the remainder of the paper. For ease of notation from now on we will denote $\Gamma_0$ by $\Gamma$ and $\phi_0$ by $\phi$.
\section{Energy minimisers}
We will restrict ourselves to considering the energy
$\mathcal{E}(\cdot,\cdot):\mathcal{K}\to \mathbb{R}$ given in \eqref{eqn-peturb-energy} for a $W:\mathbb{R}\to\mathbb{R}$ that satisfies the following properties:
\begin{enumerate}
\item $W(\cdot)\in C^1(\mathbb{R},\mathbb{R})$,
\item There exists $c_0\in \mathbb{R}^+$ such that $(W^\prime(r)-W^\prime(s))(r-s)\geq -c_0|r-s|^2$ $\forall r,s\in \mathbb{R}$,
\item There exists $c_1, c_2\in \mathbb{R}^+$ such that $c_1r^4-c_2\leq W(r)$, $\forall r\in\mathbb{R}$,
\item There exists $c_3, c_4\in \mathbb{R}^+$ such that $W^\prime(r)\leq c_3W(r)+c_4$,
\item There exists $c_5\in\mathbb{R}^+$ such that $W^\prime(r)r\geq-c_5r^2$,
\end{enumerate}
and for $\mathcal{K}$ given by
\begin{equation}
\mathcal{K}:=\left\{(\phi,\hf)\in H^1(\Gamma)\times H^2(\Gamma):\dashint_{\Gamma}\phi\:{\rm d}\Gamma=\alpha \text{ and }\hf\in \text{span}\{1,\nu_1,\nu_2,\nu_3\}^\perp\right\}.
\end{equation}
where the $\nu_i$ are the components of the normal $\nu$ of $\Gamma$ and orthogonality is understood in the $H^2(\Gamma)$ sense; although in this case it's equivalent to orthogonality in the $L^2(\Gamma)$ sense.
We motivate this choice of $\mathcal{K}$ as follows. The regularity required means a subspace of $H^1(\Gamma)\times H^2(\Gamma)$ is the natural choice to make. $\int_{\Gamma} u \:{\rm d}\Gamma=0$ is a linearised volume constraint which corresponds to membrane impermeability, $\dashint_\Gamma \phi \:{\rm d}\Gamma=\alpha$ is a linearised conservation of mass constraint on the membrane particles and $\int_{\Gamma}\hf\nu_i\:{\rm d}\Gamma=0$ for $i\in\{1,2,3\}$ are linearised translation invariance constraints on the membrane. Mathematically, these translation invariances arise since $\{\nu_1,\nu_2,\nu_3\}$ lie in the nullspace of $\mathcal{E}(\phi,\cdot)$.

We first address the question of existence.
\begin{Proposition}
\label{Prop-Direct}
There exists $(\phi^*,\hf^*)\in\mathcal{K}$ such that
\begin{displaymath}
\mathcal{E}(\phi^*,\hf^*)=\inf_{(\phi,\hf)\in\mathcal{K}}\mathcal{E}(\phi,\hf).
\end{displaymath}
\end{Proposition}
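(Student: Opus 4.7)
The plan is to apply the direct method of the calculus of variations. I will show that (i) $\mathcal{K}$ is weakly sequentially closed in $H^1(\Gamma) \times H^2(\Gamma)$, (ii) $\mathcal{E}$ is coercive on $\mathcal{K}$ so that every minimising sequence is bounded, and (iii) $\mathcal{E}$ is weakly sequentially lower semicontinuous on $\mathcal{K}$. Step (i) is immediate: the mass constraint $\dashint_\Gamma \phi\,\mathrm{d}\Gamma = \alpha$ and the orthogonality conditions $\int_\Gamma u\,\mathrm{d}\Gamma = \int_\Gamma u\nu_i\,\mathrm{d}\Gamma = 0$ for $i=1,2,3$ are all continuous linear conditions, hence preserved under weak limits.

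The central step is coercivity. Since $\Gamma$ is the sphere of radius $R$ and the admissible $u$ is orthogonal to $\{1,\nu_1,\nu_2,\nu_3\}$, which are precisely the spherical harmonics of degrees $0$ and $1$ (eigenvalues $0$ and $2/R^2$ of $-\Delta_\Gamma$), a decomposition into spherical harmonics yields the Poincaré-type inequalities
\[ \int_\Gamma u^2 \,\mathrm{d}\Gamma \leq \tfrac{R^2}{6}\int_\Gamma |\nabla_\Gamma u|^2\,\mathrm{d}\Gamma, \qquad \int_\Gamma |\nabla_\Gamma u|^2 \,\mathrm{d}\Gamma \leq \tfrac{R^2}{6}\int_\Gamma (\Delta_\Gamma u)^2\,\mathrm{d}\Gamma, \]
so that $\|\Delta_\Gamma u\|_{L^2}$ is an equivalent norm to $\|u\|_{H^2(\Gamma)}$ on the admissible subspace. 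Consequently the $u$-only quadratic part
\[ \int_\Gamma \Bigl[\tfrac{\kappa}{2}(\Delta_\Gamma u)^2 + \tfrac{1}{2}\bigl(\sigma - \tfrac{2\kappa}{R^2}\bigr)|\nabla_\Gamma u|^2 - \tfrac{\sigma u^2}{R^2}\Bigr]\mathrm{d}\Gamma \]
can be bounded below by $\eta\,\|\Delta_\Gamma u\|_{L^2}^2$ for some $\eta>0$, trading part of the biharmonic term against the other two via the Poincaré inequalities above. For the $\phi$ part, assumption (3) gives $\int_\Gamma W(\phi)\,\mathrm{d}\Gamma \geq c_1\|\phi\|_{L^4(\Gamma)}^4 - c_2|\Gamma|$, and combining $\|\nabla_\Gamma\phi\|_{L^2}^2$ with the mass constraint via the standard surface Poincaré inequality yields an $H^1(\Gamma)$ bound. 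The cross terms $\kappa\Lambda\phi\Delta_\Gamma u$ and $2\kappa\Lambda u\phi/R^2$ are handled by Young's inequality with small parameter $\delta$: the resulting $(\Delta_\Gamma u)^2$ and $u^2$ contributions are absorbed into the $u$ quadratic form, while the $\phi^2$ contributions are absorbed by a small fraction of the $c_1\phi^4$ term (Young again) and by the positive $\kappa\Lambda^2\phi^2/2$. This simultaneously proves $\mathcal{E}$ is bounded below on $\mathcal{K}$ and that every minimising sequence $(\phi_n,u_n)$ is bounded in $H^1(\Gamma) \times H^2(\Gamma)$.

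For weak lower semicontinuity, the integrands $(\Delta_\Gamma u)^2$, $|\nabla_\Gamma u|^2$ and $|\nabla_\Gamma \phi|^2$ are convex, so their integrals are weakly l.s.c.\ in $H^2$ and $H^1$ respectively. The compact embedding $H^1(\Gamma) \hookrightarrow L^p(\Gamma)$ for every finite $p$ (Sobolev on a closed $2$-surface) gives strong convergence of $\phi_n$ to $\phi$ in $L^p$ along a subsequence. For $W(\phi)$, assumption (2) means $r \mapsto W(r) + \tfrac{c_0}{2}r^2$ is convex, so
\[ \int_\Gamma W(\phi)\,\mathrm{d}\Gamma = \int_\Gamma\bigl[W(\phi)+\tfrac{c_0}{2}\phi^2\bigr]\mathrm{d}\Gamma - \tfrac{c_0}{2}\|\phi\|_{L^2}^2 \]
is the difference of a weakly l.s.c.\ functional and a strongly continuous one, hence weakly l.s.c. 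The cross terms $\int_\Gamma \phi\,\Delta_\Gamma u$ and $\int_\Gamma u\phi$ are in fact continuous along the subsequence, since $\phi_n \to \phi$ strongly in $L^2$ while $\Delta_\Gamma u_n \rightharpoonup \Delta_\Gamma u$ and $u_n \to u$ strongly in $L^2$. Combining (i)–(iii) in the standard direct-method argument produces a minimiser $(\phi^*, u^*) \in \mathcal{K}$.

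The principal obstacle is coercivity. The sign-ambiguous prefactor $\sigma - 2\kappa/R^2$ on $|\nabla_\Gamma u|^2$, the outright negative term $-\sigma u^2/R^2$, and the indefinite couplings $\phi\Delta_\Gamma u$ and $u\phi$ together mean the quadratic form has no obvious lower bound; it is only the Poincaré inequalities made available by the orthogonality constraint on $u$ that allow the negative and cross terms to be absorbed. This is precisely what the subspace condition in the definition of $\mathcal{K}$ is engineered to supply.
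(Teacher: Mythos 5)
Your proposal is correct and follows essentially the same route as the paper: the direct method, with coercivity obtained from the Poincar\'e inequalities valid on $\mathrm{span}\{1,\nu_1,\nu_2,\nu_3\}^{\perp}$ together with Young's inequality and the quartic growth of $W$, and weak lower semicontinuity from the quadratic structure plus compact embeddings; you simply supply more detail than the paper's brief sketch. One small imprecision worth fixing: in the semicontinuity step, weak l.s.c.\ of $\int_\Gamma|\nabla_\Gamma u|^2\,\mathrm{d}\Gamma$ via convexity only helps when its coefficient $\sigma-\tfrac{2\kappa}{R^2}$ is nonnegative; when it is negative this term, like $-\sigma u^2/R^2$, should instead be treated as continuous along the subsequence, which follows at once from the compact embedding $H^2(\Gamma)\hookrightarrow H^1(\Gamma)$ that is already implicit in your argument (the paper sidesteps this by viewing the whole $u$-quadratic part as a single bounded bilinear form, positive definite on the constrained subspace by the same Poincar\'e inequality).
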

\begin{proof}
We have that $H^1(\Gamma)\times H^2(\Gamma)$ is a Hilbert space so it is reflexive and since $\mathcal{K}$ is a sequentially weakly closed subset of $H^1(\Gamma)\times H^2(\Gamma)$ then existence of a minimiser will follow from the Direct method (See Theorem 9.3-1 in \cite{ciarlet2013linear}) provided $\mathcal{E}(\cdot,\cdot):\mathcal{K}\to\mathbb{R}$ is coercive and sequentially weakly lower semicontinuous.

We note the  Poincare  type inequality given by
\begin{equation}
\label{eqn-poincare}
\int_{\Gamma}^{}u^2\:{\rm d}\Gamma\leq\frac{R^2}{6}\int_{\Gamma}^{}|\nabla_\Gamma u|^2\:{\rm d}\Gamma\leq\frac{R^4}{36}\int_{\Gamma}^{}(\Delta_\Gamma u)^2\:{\rm d}\Gamma,
\end{equation}
which holds for all  $\hf\in\text{span}\{1,\nu_1,\nu_2,\nu_3\}^\perp$ (see \cite{elliott2016small}). Using this, Young's inequality  and property (3) of $W(\cdot)$ it follows that there exists $C_1,C_2$ and $C_3\in\mathbb{R}^+$ such that,
\begin{equation}
\mathcal{E}(\phi,\hf)\geq C_1\|\hf\|^2_{H^2(\Gamma)}+ C_2\|\phi\|^2_{H^1(\Gamma)}-C_3.
\end{equation} 
Hence $\mathcal{E}(\cdot,\cdot):\mathcal{K}\to\mathbb{R}$ is coercive.

To prove that $\mathcal{E}(\cdot,\cdot):\mathcal{K}\to\mathbb{R}$ is sequentially weakly lower semi continuous we first note that the quadratic terms in $\hf$ form a bounded, symmetric and positive definite bilinear form and hence are weakly lower semi-continuous. A similar argument can be applied for the $|\nabla_\Gamma\phi|^2$ term. The remaining terms are also weakly lower semi-continuous by an application of a Rellich-Kondrachov type compactness embedding theorem \cite{aubin1982nonlinear}. This then completes the proof.

\end{proof}
\subsection{Euler-Lagrange equations}
\label{332}
Knowing that minimisers of \eqref{eqn-peturb-energy} exist, we want to say something about their structure. Therefore we compute the Euler equations associated with the energy functional $\mathcal{E}(\cdot,\cdot)$ over the space $\mathcal{K}$, and secondly  over the full space $H^1(\Gamma)\times H^2(\Gamma)$, by introducing the constraints as Lagrange multipliers. By applying Euler's Theorem (See Theorem 7.1-5 in \cite{ciarlet2013linear}) it follows that a critical point (and hence a minimiser $(\phi^*,\hf^*)$ of Proposition \ref{Prop-Direct}) is a solution of the following problem:
\begin{Problem}
\label{Prob-Euler}
Find $(\phi,\hf)\in \mathcal{K}$ such that
\begin{align}
\label{EL2}
\int_{\Gamma}\frac{b}{\epsilon}W^\prime(\phi)w+b\epsilon\nabla_\Gamma \phi\cdot\nabla_\Gamma w+\kappa\Lambda w\Delta_\Gamma \hf+\frac{2\kappa\Lambda \hf w}{R^2}+\kappa\Lambda^2\phi w\:{\rm d}\Gamma&=0,
\\
\label{EL1}
\int_{\Gamma}\kappa\Delta_\Gamma \hf\Delta_\Gamma v+\left(\sigma-\frac{2\kappa}{R^2}\right)\nabla_\Gamma \hf\cdot\nabla_\Gamma v-\frac{2\sigma}{R^2}\hf v+\kappa\Lambda\phi\Delta_\Gamma v+\frac{2\kappa\Lambda\phi v}{R^2}\:{\rm d}\Gamma&=0,
\end{align}
for all $w\in W:=\left\{\eta\in H^1(\Gamma):\int_{\Gamma}\eta\:{\rm d}\Gamma=0\right\}$ and for all $v\in V:=\{\eta\in H^2(\Gamma):\eta\in \text{span}\{1,\nu_1,\nu_2,\nu_3\}^\perp\}$. 
\end{Problem}

By defining
\begin{align*}
\varphi_0&:=\int_{\Gamma}\hf\:{\rm d}\Gamma,
&
\varphi_i&:=\int_{\Gamma}\nu_i \hf\:{\rm d}\Gamma,
&
\varphi_4&:=\int_{\Gamma}(\phi-\alpha)\:{\rm d}\Gamma,
\end{align*}
for $i\in\{1,2,3\}$ and observing that their Frechet derivatives exist and are continuous, linear and bijective it follows from  the Euler-Lagrange Theorem (Theorem 7.15-1 in \cite{ciarlet2013linear}) that if $(\phi,\hf)$ is a solution of Problem \ref{Prob-Euler} then there exists $\lambda\in\mathbb{R}^5$ such that $(\phi,\hf,\lambda)$ is a solution of the problem given below.
\begin{Problem}
Find $(\phi,\hf,\lambda)\in \mathcal{K}\times \mathbb{R}^5$ such that for all $w\in H^2(\Gamma)$ and for all $v\in H^2(\Gamma)$, 
\begin{align}
\label{EL4}
\int_{\Gamma}\left(\frac{b}{\epsilon}W^\prime(\phi)w+b\epsilon\nabla_\Gamma \phi\cdot\nabla_\Gamma w+\frac{2\kappa\Lambda \hf w}{R^2}+\kappa\Lambda\Delta_\Gamma \hf w+\kappa\Lambda^2\phi w+\lambda_0w\right)\:{\rm d}\Gamma&=0,
\\
\label{EL3}	
\begin{split}
\int_{\Gamma}\left(\kappa\Delta_\Gamma \hf\Delta_\Gamma v+\left(\sigma-\frac{2\kappa}{R^2}\right)\nabla_\Gamma \hf\cdot\nabla_\Gamma v-\frac{2\sigma}{R^2}\hf v+\right.\qquad\qquad\qquad\qquad\qquad\:
\\
\left.\kappa\Lambda\phi\Delta_\Gamma v+\frac{2\kappa\Lambda\phi v}{R^2}+\sum_{i=1}^{3}\lambda_iv\nu_i+\lambda_4v\right)\:{\rm d}\Gamma&=0.
\end{split}
\end{align}
\end{Problem} 

By testing with appropriate functions we can determine the values of the Lagrange multipliers $\lambda_i$ for $i\in\{0,1,2,3,4\}$. Testing equation (\ref{EL4}) with 1 it follows that the Lagrange multiplier $\lambda_0$ is given by
\begin{displaymath}
\lambda_0=-\kappa\Lambda^2\alpha-\frac{b}{\epsilon}\dashint_{\Gamma}W^\prime(\phi)\:{\rm d}\Gamma.
\end{displaymath}
Testing equation (\ref{EL3}) with $\nu_i$, and using the fact that  $-\Delta_\Gamma\nu_i=\frac{2}{R^2}\nu_i$ and $\int_{\Gamma}\nu_i\nu_j\:{\rm d}\Gamma=\frac{4\pi R^2}{3}\delta_{ij}$ it follows that
\begin{align*}
\lambda_i&=0\qquad\text{ for }i=1,2,3.
\end{align*}
Finally testing equation (\ref{EL3}) it follows that
\begin{displaymath}
\lambda_4=-\frac{2\kappa\Lambda\alpha}{R^2}.
\end{displaymath}

The PDEs corresponding with \eqref{EL4} and \eqref{EL3} are then given by
\begin{align}
\label{eqn-EL1}
\frac{b}{\epsilon}W^\prime(\phi)-b\epsilon\Delta_\Gamma\phi+\kappa\Lambda\Delta_\Gamma \hf+\frac{2\kappa\Lambda \hf}{R^2}+\kappa\Lambda^2\phi+\lambda_0=&0,
\\
\label{eqn-EL2}
\kappa\Delta_\Gamma^2 \hf-\left(\sigma-\frac{2\kappa}{R^2}\right)\Delta_\Gamma \hf-\frac{2\sigma \hf}{R^2}+\Lambda\Delta_\Gamma\phi+\frac{2\Lambda\phi }{R^2}+\lambda_4=&0.
\end{align}
\subsection{Reduced Order Derivation}
The Euler-Lagrange equations given in \eqref{eqn-EL1} and \eqref{eqn-EL2} can be simplified to a system of two second order equations. We rewrite \eqref{eqn-EL2} as follows
\begin{align}
\label{eqn-EL2*}
\left(\Delta_\Gamma+\frac{2}{R^2}\right)\left(\frac{\sigma}{\kappa}-\Delta_\Gamma \right)\hf=\Lambda\left(\Delta_\Gamma+\frac{2}{R^2}\right)(\phi-\alpha),
\end{align}
and note that if
\begin{align}
\left(\Delta_\Gamma +\frac{2}{R^2}\right)z=0,
\end{align}
then $z$ is an eigenfunction of $-\Delta_\Gamma$ with eigenvalue $\frac{2}{R^2}$ and hence $z\in\text{span}\{\nu_1,\nu_2,\nu_3\}$. Therefore it follows from \eqref{eqn-EL2*} that there exists some $\beta\in\text{span}\{\nu_1,\nu_2,\nu_3\}$ such that
\begin{align}
\label{eqn:red-ord-1}
\left(\frac{\sigma}{\kappa}-\Delta_\Gamma\right)u=\Lambda(\phi-\alpha)+\beta.
\end{align}
Now writing $\mathcal{V}=\text{span}\{1,\nu_1,\nu_2,\nu_3\}^\perp$ it follows from a simple calculation that since $u\in\mathcal{V}$ then $\left(\frac{\sigma}{\kappa}-\Delta_\Gamma\right)u\in\mathcal{V}$ also. Denoting the projection onto $\mathcal{V}$ by $\mathbf{P}$ and applying this projection to \eqref{eqn:red-ord-1} results in
\begin{align}
\label{eqn:red-ord-2}
\left(\frac{\sigma}{\kappa}-\Delta_\Gamma\right)u=\Lambda\mathbf{P}\phi.
\end{align}
This motivates introducing an operator $\mathcal{G}:\mathcal{V}\to \mathcal{V}$ where given $\eta\in\mathcal{V}$, $\mathcal{G}(\eta)$ denotes the unique solution $v\in\mathcal{V}$ of the elliptic equation
\begin{align}
\left(\frac{\sigma}{\kappa}-\Delta_\Gamma \right)v=\Lambda\eta.
\end{align} 
From this and \eqref{eqn:red-ord-2} it follows that
\begin{align}
\label{eqn-reducedHeight}
\hf=\mathcal{G}(\mathbf{P}\phi).
\end{align}
Therefore  we can rewrite \eqref{eqn-EL1} as
\begin{align}
\label{RedOrd PDE}
\frac{b}{\epsilon}\left(W^\prime(\phi)-\dashint_\Gamma W^\prime(\phi)\:{\rm d}\Gamma\right)-b\epsilon\Delta_\Gamma\phi+\kappa\Lambda\left(\Delta_\Gamma +\frac{2}{R^2}\right)\mathcal{G}(\mathbf{P}\phi)+\kappa\Lambda^2(\phi-\alpha)=0,
\end{align}
or equivalently
\begin{align}
\label{RedOrd PDE2}
\frac{b}{\epsilon}\left(W^\prime(\phi)-\dashint_\Gamma W^\prime(\phi)\:{\rm d}\Gamma\right)-b\epsilon\Delta_\Gamma\phi+\kappa\Lambda\mathcal{G}\left(\left(\Delta_\Gamma +\frac{2}{R^2}\right)(\phi-\alpha)\right)+\kappa\Lambda^2(\phi-\alpha)=0.
\end{align}
Using \eqref{eqn-reducedHeight} we can define a new energy $\widetilde{\mathcal{E}}$ given by
\begin{align}
\widetilde{\mathcal{E}}(\phi):=\mathcal{E}(\phi,\mathcal{G}(\phi)),
\end{align}
which simplifies to
\begin{align}
\label{eqn-reducedenergy}
\widetilde{\mathcal{E}}(\phi)=\int_{\Gamma}\frac{\kappa\Lambda}{2}\mathbf{P}\phi\left(\Delta_\Gamma+\frac{2}{R^2}\right)\mathcal{G}(\mathbf{P}\phi)+\frac{b\epsilon}{2}|\nabla_{\Gamma}\phi|^2+
\frac{b}{\epsilon}W(\phi)+\frac{\kappa\Lambda^2\phi^2}{2}\:{\rm d}\Gamma.
\end{align}
We note that if $(\phi^*,u^*)$ is a minimiser of $\mathcal{E}$ then $u^*=\mathcal{G}(\mathbf{P}\phi^*)$ since it is also a critical point and must satisfy \eqref{eqn-EL2*}. Let us further suppose that $\widetilde{\phi}^*$ is a minimiser of $\widetilde{\mathcal{E}}$ then it follows that
\begin{align}
\label{eqn:red-ord-3}
\mathcal{E}(\phi^*,u^*)\leq\mathcal{E}(\widetilde{\phi}^*,\mathcal{G}(\mathbf{P}\widetilde{\phi}^*))=\widetilde{\mathcal{E}}(\widetilde{\phi}^*)\leq\widetilde{\mathcal{E}}(\phi^*)=\mathcal{E}(\phi^*,\mathcal{G}(\mathbf{P}\phi^*))=\mathcal{E}(\phi^*,u^*),
\end{align}
and hence all the inequalities in \eqref{eqn:red-ord-3} are equalities so $\phi^*$ is a minimiser of $\widetilde{\mathcal{E}}$ and $(\widetilde{\phi}^*,\mathcal{G}(\mathbf{P}\widetilde{\phi}^*))$ is a minimiser of $\mathcal{E}$. Therefore we find that finding minimisers of $\widetilde{\mathcal{E}}$ is equivalent to finding minimisers of $\mathcal{E}$.
\section{Gradient Flow}
We observe that the first variation of $\mathcal{E}(\cdot,\cdot)$ is given by
\begin{align*}
\mathcal{E}^\prime(\phi,\hf)[w,v]=\int_{\Gamma}&\frac{b}{\epsilon}W^\prime(\phi)w+b\epsilon\nabla_\Gamma\phi\cdot\nabla_\Gamma w+\left(\sigma-\frac{2\kappa}{R^2}\right)\nabla_\Gamma \hf\cdot\nabla_\Gamma v+\kappa\Delta_\Gamma \hf\Delta_\Gamma v
\\
&-\frac{2\sigma \hf v}{R^2}+\kappa\Lambda w\Delta_\Gamma \hf+\kappa\Lambda\phi\Delta_\Gamma v-\frac{2\kappa\Lambda \hf w}{R^2}-\frac{2\kappa\Lambda \phi v}{R^2}+\kappa\Lambda^2\phi w\:{\rm d}\Gamma.
\end{align*}
We consider the equations
\begin{align}
\label{GF1}
\begin{split}
-\alpha_1(\phi_t,w)_{L^2(\Gamma)}&=\int_{\Gamma}\frac{b}{\epsilon}W^\prime(\phi)w+b\epsilon\nabla_\Gamma\phi\cdot\nabla_\Gamma w
\\
&\qquad+\kappa\Lambda w\Delta_\Gamma \hf +\frac{2\kappa\Lambda \hf w}{R^2}+\kappa\Lambda^2\phi w\:{\rm d}\Gamma,
\end{split}
\\
\label{GF2}
\begin{split}
-\alpha_2(\hf_t,v)_{L^2(\Gamma)}&=\int_{\Gamma}\left(\sigma-\frac{2\kappa}{R^2}\right)\nabla_\Gamma \hf\cdot\nabla_\Gamma v+\kappa\Delta_\Gamma \hf\Delta_\Gamma v
\\
&\qquad-\frac{2\sigma \hf v}{R^2}+\kappa\Lambda\phi\Delta_\Gamma v+\frac{2\kappa\Lambda \phi v}{R^2}\:{\rm d}\Gamma,
\end{split}
\end{align}
for all $v\in V$ and for all $w\in W$, which can be seen to give rise to a gradient flow of $\mathcal{E}(\phi,\hf)$ in $W\times V$ since
\begin{align}
\frac{d}{dt}\mathcal{E}(\phi,\hf)=-\alpha_1\|\phi_t||^2_{L^2(\Gamma)}-\alpha_2\|\hf_t\|^2_{L^2(\Gamma)}\leq 0.
\end{align}
By applying the Euler-Lagrange theorem, and introducing Lagrange multipliers $\lambda_i$ for $i\in\{0,1,2,3,4\}$ this implies that for all $w\in H^1(\Gamma)$ and for all $v\in H^2(\Gamma)$,
\begin{align}
\label{EL5}
\begin{split}
-\alpha_1(\phi_t,w)_{L^2(\Gamma)}&=\int_{\Gamma}\frac{b}{\epsilon}W^\prime(\phi)w+b\epsilon\nabla_\Gamma\phi\cdot\nabla_\Gamma w
\\
&\qquad+\kappa\Lambda w\Delta_\Gamma \hf +\frac{2\kappa\Lambda \hf w}{R^2}+\kappa\Lambda^2\phi w+\lambda_0w\:{\rm d}\Gamma,
\end{split}
\\
\label{EL6}
\begin{split}
-\alpha_2(\hf_t,v)_{L^2(\Gamma)}&=\int_{\Gamma}\left(\sigma-\frac{2\kappa}{R^2}\right)\nabla_\Gamma \hf\cdot\nabla_\Gamma v+\kappa\Delta_\Gamma \hf\Delta_\Gamma v
\\
&\qquad-\frac{2\sigma \hf v}{R^2}+\kappa\Lambda\phi\Delta_\Gamma v+\frac{2\kappa\Lambda \phi v}{R^2}+\sum_{i=1}^{3}\lambda_iv\nu_i+\lambda_4v\:{\rm d}\Gamma,
\end{split}
\end{align}
where $\lambda_i$ for $i\in\{0,1,2,3\}$ are Lagrange multipliers. Testing equation \eqref{EL5} with $1$ and equation \eqref{EL6} with $1,\nu_1,\nu_2$ and $\nu_3$ as in subsection \ref{332}, we observe that the Lagrange multipliers $\lambda_i$ for $i\in\{0,1,2,3,4\}$ are again given by
\begin{align}
\lambda_0&=-\kappa\Lambda^2\alpha-\frac{b}{\epsilon}\dashint_\Gamma W^\prime(\phi)\:{\rm d}\Gamma,&&
\lambda_1=\lambda_2=\lambda_3=0,&&
\lambda_4=-\frac{2\kappa\Lambda\alpha}{R^2}.
\end{align}
Hence, a gradient flow of $\mathcal{E}(\cdot,\cdot)$ in $W\times V$ is given by
\begin{align}
\label{AC PDE}
\begin{cases}
\alpha_1\phi_t+\frac{b}{\epsilon}W^\prime(\phi)-b\epsilon\Delta_\Gamma\phi+\kappa\Lambda\Delta_\Gamma \hf+\frac{2\kappa\Lambda \hf}{R^2}+\kappa\Lambda^2\phi+\lambda_0=0&\Gamma\times(0,T),\\
\alpha_2 \hf_t-\left(\sigma-\frac{2\kappa}{R^2}\right)\Delta_\Gamma \hf+\kappa\Delta_\Gamma^2 \hf-\frac{2\sigma \hf}{R^2}+\kappa\Lambda\Delta_\Gamma\phi+\frac{2\kappa\Lambda \phi}{R^2}+\lambda_4=0&\Gamma\times (0,T),\\
\phi(\cdot,0)=\phi_0(\cdot)&\Gamma\times\{t=0\},\\
\hf(\cdot,0)=\hf_0(\cdot)&\Gamma\times\{t=0\}.
\end{cases}
\end{align}
%
\subsection{Existence}
Before turning to consider numerical simulations of \eqref{AC PDE}, we first address questions related to well-posedness. Beginning with existence we will prove the following result.
\begin{theorem}
\label{Existence}
Suppose $(\phi_0,\hf_0)\in\mathcal{K}$, then there exists $(\phi,\hf)\in\mathcal{K}$ such that
\begin{align*}
\phi&\in L^\infty(0,T;H^1(\Gamma))\cap C([0,T];L^2(\Gamma)),
\\
\hf&\in L^\infty(0,T; H^2(\Gamma))\cap C([0,T];L^2(\Gamma)),
\\
\phi^\prime&\in L^2(0,T;L^2(\Gamma)),
\\
\hf^\prime&\in L^2(0,T;L^2(\Gamma)),
\\
\hf_0&=\hf(0),
\\
\phi_0&=\phi(0),
\end{align*}
and satisfying
\begin{align}
\begin{split}
-\int_{0}^{T}\alpha_1\left<\phi^\prime,\eta\right>\:{\rm dt}=\int_0^T\left[\int_{\Gamma}\right.&\frac{b}{\epsilon}\left(W^\prime(\phi)-\dashint_\Gamma W^\prime(\phi)\:{\rm d}\Gamma\right)\eta+b\epsilon\nabla_\Gamma\phi\cdot\nabla_\Gamma\eta
\\
&\left.-\kappa\Lambda\nabla_\Gamma \hf\cdot\nabla_\Gamma\eta+\frac{2\kappa\Lambda \hf\eta}{R^2}+\kappa\Lambda^2(\phi-\alpha)\eta\:{\rm d}\Gamma	\right]\:{\rm dt},
\end{split}
\\
\begin{split}
-\int_{0}^{T}\alpha_2\left<\hf^\prime,\xi\right>\:{\rm dt}=\int_0^T\left[\int_{\Gamma}\right.& \kappa\Delta_\Gamma \hf\Delta_\Gamma\xi+\left(\sigma-\frac{2\kappa}{R^2}\right)\nabla_\Gamma \hf\cdot\nabla_\Gamma\xi
\\
&\left.-\frac{2\sigma \hf\xi}{R^2}+\frac{2\kappa\Lambda(\phi-\alpha)\xi}{R^2}-\kappa\Lambda\nabla_\Gamma\phi\cdot\nabla_\Gamma\xi\:{\rm d}\Gamma\right]\:{\rm dt},
\end{split}
\end{align}
for all $\eta\in L^2(0,T;H^1(\Gamma))$ and for all $\xi\in L^2(0,T;H^2(\Gamma))$.
\end{theorem}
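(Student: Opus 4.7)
The plan is to use a Galerkin method based on spherical harmonics adapted to the constraint spaces $W$ and $V$. The spherical harmonics $\{\psi_k\}_{k \geq 0}$ are a complete orthonormal basis of $L^2(\Gamma)$ consisting of eigenfunctions of $-\Delta_\Gamma$, with $\psi_0$ constant and $\psi_1,\psi_2,\psi_3$ proportional to the normal components $\nu_1,\nu_2,\nu_3$. For each $N \geq 4$ I would set
\begin{equation*}
\phi_N(t) = \alpha + \sum_{k=1}^{N} a_k^N(t)\psi_k, \qquad u_N(t) = \sum_{k=4}^{N} b_k^N(t)\psi_k,
\end{equation*}
so that the mass constraint on $\phi_N$ and the orthogonality constraints on $u_N$ are automatically built in. Testing \eqref{GF1} against $\psi_j$ for $1 \leq j \leq N$ and \eqref{GF2} against $\psi_j$ for $4 \leq j \leq N$ makes the Lagrange multipliers drop out of the Galerkin equations, leaving a coupled ODE system for the coefficients $a_k^N,b_k^N$ whose right-hand side is continuous in its arguments, so Peano's theorem yields a local-in-time solution.

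Next I would use the gradient flow structure to upgrade to a global solution with uniform bounds. Since $(\phi_N)_t$ and $(u_N)_t$ lie in the respective Galerkin subspaces, they are admissible test functions, and the resulting energy identity
\begin{equation*}
\mathcal{E}(\phi_N(t),u_N(t)) + \int_0^t \bigl( \alpha_1\|(\phi_N)_s\|_{L^2(\Gamma)}^2 + \alpha_2\|(u_N)_s\|_{L^2(\Gamma)}^2 \bigr)\,ds = \mathcal{E}(\phi_N(0),u_N(0))
\end{equation*}
combined with the coercivity estimate derived in the proof of Proposition \ref{Prop-Direct} and the Poincar\'e type inequality \eqref{eqn-poincare}, delivers the uniform bounds
\begin{equation*}
\|\phi_N\|_{L^\infty(0,T;H^1(\Gamma))} + \|u_N\|_{L^\infty(0,T;H^2(\Gamma))} + \|(\phi_N)_t\|_{L^2(0,T;L^2(\Gamma))} + \|(u_N)_t\|_{L^2(0,T;L^2(\Gamma))} \leq C,
\end{equation*}
independent of $N$. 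This bound both extends the solution to $[0,T]$ and provides the compactness needed for the limit.

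Via Banach-Alaoglu I would extract a subsequence with $\phi_N \rightharpoonup^* \phi$ in $L^\infty(0,T;H^1(\Gamma))$, $u_N \rightharpoonup^* u$ in $L^\infty(0,T;H^2(\Gamma))$, and $(\phi_N)_t,(u_N)_t$ converging weakly in $L^2(0,T;L^2(\Gamma))$. The Aubin-Lions lemma applied with the compact embeddings $H^1(\Gamma) \hookrightarrow\hookrightarrow L^2(\Gamma)$ and $H^2(\Gamma) \hookrightarrow\hookrightarrow L^2(\Gamma)$ then gives strong convergence in $C([0,T];L^2(\Gamma))$ and, up to a further subsequence, pointwise almost everywhere. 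The constraints defining $\mathcal{K}$ are linear and preserved under weak limits, so $(\phi,u) \in \mathcal{K}$, and the initial conditions are recovered from continuity into $L^2(\Gamma)$. The form of the weak equations in the statement is then obtained by substituting the explicit values of $\lambda_0 = -\kappa\Lambda^2\alpha - \frac{b}{\epsilon}\dashint_\Gamma W'(\phi)$ and $\lambda_4 = -\frac{2\kappa\Lambda\alpha}{R^2}$ derived in Subsection \ref{332}, and by integrating by parts (using Theorem \ref{By Parts}) the cross-coupling terms $\kappa\Lambda w \Delta_\Gamma u$ and $\kappa\Lambda \phi \Delta_\Gamma v$ to produce the gradient pairings $-\kappa\Lambda\nabla_\Gamma u\cdot\nabla_\Gamma\eta$ and $-\kappa\Lambda\nabla_\Gamma\phi\cdot\nabla_\Gamma\xi$ appearing in the theorem.

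The main obstacle is identifying the weak limit of the nonlinear term $W'(\phi_N)$. The energy bound gives $W(\phi_N) \in L^\infty(0,T;L^1(\Gamma))$, and property (4) then gives $W'(\phi_N) \in L^\infty(0,T;L^1(\Gamma))$; the 2D Sobolev embedding $H^1(\Gamma) \hookrightarrow L^p(\Gamma)$ for every $p<\infty$ provides improved integrability of $\phi_N$ that, together with property (4), should yield equi-integrability of $W'(\phi_N)$ in a better space. Combined with almost everywhere convergence of $\phi_N$, an application of Vitali's theorem then identifies the limit as $W'(\phi)$; alternatively one can exploit the semi-monotonicity property (2) to carry out a Minty-Browder type identification of the limit. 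The same passage handles the nonlocal term $\dashint_\Gamma W'(\phi_N)$.
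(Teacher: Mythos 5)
Your proposal follows essentially the same route as the paper: a Galerkin approximation built from eigenfunctions of the Laplace--Beltrami operator, an energy identity from the gradient-flow structure combined with the coercivity of $\mathcal{E}$ from Proposition \ref{Prop-Direct} to get the uniform bounds, and then weak-$*$ compactness plus Aubin--Lions to pass to the limit, with the growth assumptions on $W$ identifying the nonlinear term. The only deviations (building the constraints into the ansatz rather than showing they are propagated by testing with $1,\nu_1,\nu_2,\nu_3$, and using Vitali/Minty in place of the strong $L^2(0,T;L^p(\Gamma))$ convergence) are minor variants of the same argument and are sound.
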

\subsubsection{Galerkin problem}

We prove Theorem \ref{Existence} using a Galerkin method. Using that there exist smooth eigenfunctions $\left\{z_j\right\}$ of the Laplace-Beltrami operator $-\Delta_\Gamma$ which form an orthonormal basis of $H^1(\Gamma)$ and are orthogonal in $L^2(\Gamma)$, we define $V^m$ as
\begin{displaymath}
V^m:=span\left\{z_1,z_2,...,z_m\right\},
\end{displaymath}
and set $\mathcal{P}_m:L^2(\Gamma)\to V^m$ to be the Galerkin projection given by
\begin{displaymath}
(P_mv-v,u_m)=0\qquad\forall v\in L^2(\Gamma), u_m\in V^m.
\end{displaymath}
$P_m$ then satisifies the following strong convergence results,
\begin{align}
\label{eqn-pm1}
\mathcal{P}_mv\to& v\text{ in }L^2(\Gamma)\quad\forall v\in L^2(\Gamma),\\
\mathcal{P}_mv\to& v\text{ in }H^1(\Gamma)\quad\forall v\in H^1(\Gamma),\\
\mathcal{P}_mv\to& v\text{ in }H^2(\Gamma)\quad\forall v\in H^2(\Gamma).
\label{eqn-pm3}
\end{align}

Therefore, the Galerkin system we are considering is given by
\begin{align}
\label{discretephi}
\begin{split}
-\alpha_1\left<\phi_m^\prime,\eta_m\right>=\int_{\Gamma}&\frac{b}{\epsilon}\left(W^\prime(\phi_m)-\dashint_\Gamma W^\prime(\phi_m)\right)\eta_m+b\epsilon\nabla_\Gamma\phi_m\cdot\nabla_\Gamma\eta_m
\\
&-\kappa\Lambda\nabla_\Gamma \hf_m\cdot\nabla_\Gamma\eta_m+\frac{2\kappa\Lambda \hf_m\eta_m}{R^2}+\kappa\Lambda^2(\phi_m-\alpha)\eta_m\:{\rm d}\Gamma,
\end{split}
\\
\label{discreteh}
\begin{split}
-\alpha_2\left<\hf_m^\prime,\xi_m\right>=\int_{\Gamma}& \kappa\Delta_\Gamma \hf_m\Delta_\Gamma\xi_m+\left(\sigma-\frac{2\kappa}{R^2}\right)\nabla_\Gamma \hf_m\cdot\nabla_\Gamma\xi_m
\\
&-\frac{2\sigma \hf_m\xi_m}{R^2}+\frac{2\kappa\Lambda(\phi_m-\alpha)\xi_m}{R^2}-\kappa\Lambda\nabla_\Gamma\phi_m\cdot\nabla_\Gamma\xi_m\:{\rm d}\Gamma,
\end{split}
\end{align}
for all $\eta_m, \xi_m \in V^m$.

This system can then be written as an initial value problem for a system of ordinary differential equations with locally Lipschitz right hand sides, for which there exists a unique solution at least locally in time.

We observe that 
\begin{displaymath}
\left<\phi_m^\prime,\eta_m\right>=(\phi_m^\prime,\eta_m)\qquad\text{and}\qquad\left<\hf_m^\prime,\mu_m\right>=(\hf_m^\prime,\mu_m).
\end{displaymath}
Testing \eqref{discretephi} and \eqref{discreteh} with $\eta_m=1$ and $\xi_m=1,\nu_1,\nu_2,\nu_3$, and applying standard ODE results  it follows that if $(\phi_m(0),\hf_m(0))\in\mathcal{K}$ then the solution $(\phi_m(t),\hf_m(t))\in\mathcal{K}$ for $t\in[0,T]$, where $T$ comes from the local existence result used above.
\subsubsection{Energy estimates}
In order to pass to the limit, and prove  existence of the  full system we derive some \emph{a priori} estimates by considering the discrete energy $\mathcal{E}(\phi_m,\hf_m)$.
\begin{theorem}
\label{Energy Estimates}
Suppose $(\phi_m, \hf_m)\in\mathcal{K}$ satisfy equations ~\eqref{discretephi}~--~\eqref{discreteh} then there exists a constant $C$ independent of $m$ such that
\begin{align}
\label{En Est 1}
\|\phi_m\|_{L^\infty(0,T;H^1(\Gamma))}\leq C,
\\
\label{En Est 3}
\|\hf_m\|_{L^\infty(0,T;H^2(\Gamma))}\leq C,
\\
\label{En Est 4}
\|\phi^\prime_m\|_{L^2(0,T;L^2(\Gamma))}\leq C,
\\
\label{En Est 5}
\|\hf^\prime_m\|_{L^2(0,T;L^2(\Gamma))}\leq C,
\end{align} 
\end{theorem}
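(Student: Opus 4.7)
The plan is to derive a discrete energy dissipation identity by testing the Galerkin equations with their own time derivatives, then invoke the coercivity of $\mathcal{E}$ established in the proof of Proposition \ref{Prop-Direct}. Since $(\phi_m(t),\hf_m(t))\in\mathcal{K}$ throughout $[0,T]$, both $\phi_m'(t)$ and $\hf_m'(t)$ lie in $V^m$, with $\phi_m'$ of zero mean and $\hf_m'$ orthogonal to $\text{span}\{1,\nu_1,\nu_2,\nu_3\}$, so they are admissible test functions. Taking $\eta_m=\phi_m'$ in \eqref{discretephi} and $\xi_m=\hf_m'$ in \eqref{discreteh}, the constant-in-space contributions from $\dashint_\Gamma W^\prime(\phi_m)$ and from the shift by $\alpha$ integrate against $\phi_m'$ and $\hf_m'$ to zero. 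Summing the two resulting equations, the right-hand side reassembles into $\frac{d}{dt}\mathcal{E}(\phi_m,\hf_m)$ for the energy in \eqref{eqn-peturb-energy}, yielding
\begin{equation*}
\frac{d}{dt}\mathcal{E}(\phi_m,\hf_m)=-\alpha_1\|\phi_m'\|_{L^2(\Gamma)}^2-\alpha_2\|\hf_m'\|_{L^2(\Gamma)}^2\leq 0.
\end{equation*}

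Integrating over $(0,T)$ gives the identity
\begin{equation*}
\mathcal{E}(\phi_m(T),\hf_m(T))+\int_0^T\!\bigl(\alpha_1\|\phi_m'\|^2_{L^2(\Gamma)}+\alpha_2\|\hf_m'\|^2_{L^2(\Gamma)}\bigr)\,dt=\mathcal{E}(\phi_m(0),\hf_m(0)).
\end{equation*}
To bound the right-hand side uniformly in $m$, I would take the Galerkin initial data to be $\mathcal{P}_m\phi_0$ and $\mathcal{P}_m\hf_0$; the strong convergences \eqref{eqn-pm1}--\eqref{eqn-pm3}, combined with continuity of $\mathcal{E}$ on $H^1(\Gamma)\times H^2(\Gamma)$ (using the growth property (4) on $W$ together with the Sobolev embedding $H^1(\Gamma)\hookrightarrow L^q(\Gamma)$ available on the two-dimensional surface $\Gamma$), yield $\mathcal{E}(\mathcal{P}_m\phi_0,\mathcal{P}_m\hf_0)\to\mathcal{E}(\phi_0,\hf_0)$ and hence a uniform bound. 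The coercivity estimate $\mathcal{E}(\phi,\hf)\geq C_1\|\hf\|_{H^2(\Gamma)}^2+C_2\|\phi\|_{H^1(\Gamma)}^2-C_3$ proved for Proposition \ref{Prop-Direct} then gives \eqref{En Est 1} and \eqref{En Est 3}, while the dissipation integral directly furnishes \eqref{En Est 4} and \eqref{En Est 5}.

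The main obstacle is the careful bookkeeping needed to verify that every term on the right-hand sides of \eqref{discretephi} and \eqref{discreteh}, when tested with $\phi_m'$ and $\hf_m'$, recombines into the time derivative of the corresponding integrand in \eqref{eqn-peturb-energy}. The subtle pieces are the symmetric cross-couplings $\kappa\Lambda\phi\Delta_\Gamma\hf$ and $2\kappa\Lambda\phi\hf/R^2$: their time derivatives split as $\kappa\Lambda(\phi_m'\Delta_\Gamma\hf_m+\phi_m\Delta_\Gamma\hf_m')$ and $\frac{2\kappa\Lambda}{R^2}(\phi_m'\hf_m+\phi_m\hf_m')$ respectively, and one must check that exactly these recombinations emerge after integrating by parts the $\nabla_\Gamma\hf_m\cdot\nabla_\Gamma\phi_m'$ and $\nabla_\Gamma\phi_m\cdot\nabla_\Gamma\hf_m'$ contributions coming from the two Galerkin equations.
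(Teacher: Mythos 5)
Your proposal is correct and takes essentially the same route as the paper: the paper likewise derives the dissipation identity $\frac{d}{dt}\mathcal{E}(\phi_m,\hf_m)=-\alpha_1\|\phi_m^\prime\|^2_{L^2(\Gamma)}-\alpha_2\|\hf_m^\prime\|^2_{L^2(\Gamma)}$ at the Galerkin level, integrates in time, bounds the initial energy uniformly in $m$, and concludes via the coercivity estimate from Proposition \ref{Prop-Direct}. The only difference is that you make explicit what the paper leaves implicit --- testing with $\phi_m^\prime,\hf_m^\prime\in V^m$, the cancellation of the mean-value and $\alpha$-shift terms, the recombination of the cross-coupling terms after integration by parts, and the choice of projected initial data behind the assumption $\mathcal{E}(\phi_m(0),\hf_m(0))\leq C$.
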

\begin{proof}
By differentiating the energy functional $\mathcal{E}(\cdot,\cdot)$ with respect to $t$ we obtain,
\begin{align}
\frac{d}{dt}\mathcal{E}(\phi_m,\hf_m)=-\alpha_1\|\phi_m^\prime\|^2_{L^2(\Gamma)}-\alpha_2\|\hf_m^\prime\|^2_{L^2(\Gamma)}.
\end{align}
Integrating and using the coercivity of $\mathcal{E}(\cdot,\cdot)$ proven in Proposition \ref{Prop-Direct} it follows that for all $t\in(0,T)$,
\begin{align}
\|\hf_m\|^2_{H^2(\Gamma)}+\|\phi_m\|^2_{H^1(\Gamma)}+\int_{0}^{t}\|\phi^\prime_m\|^2_{L^2(\Gamma)}\:{\rm dt}+\int_{0}^{t}\|\hf_m^\prime\|^2_{L^2(\Gamma)}\:{\rm dt}\leq C,
\end{align}
where in the above line we have used that $\mathcal{E}(\phi_m(0),\hf_m(0))\leq C$ where $C$ is some constant independent of $m$. From which it follows that for all $t\in(0,T)$,
\begin{align}
\sup_{t\in(0,T)}\|\hf_m\|^2_{H^2(\Gamma)}+\sup_{t\in(0,T)}\|\phi_m\|^2_{H^1(\Gamma)}+\int_{0}^{t}\|\phi^\prime_m\|^2_{L^2(\Gamma)}\:{\rm dt}+\int_{0}^{t}\|\hf_m^\prime\|^2_{L^2(\Gamma)}\:{\rm dt}\leq C
\end{align}
which give the required energy bounds.
\end{proof}
\subsubsection{Existence theorem proof}
Applying the energy estimates proven in Theorem \ref{Energy Estimates}
and considering subsequences as neccessary, there exist $\phi^*$ and $\hf^*$ in the indicated spaces such that the following convergence results hold in the weak sense,
\begin{align}
\label{eqn-weakprime}
\phi_m^\prime\rightharpoonup\left(\phi^*\right)^\prime&\text{ in }L^2(0,T;L^2(\Gamma)),&\hf_m^\prime\rightharpoonup\left(\hf^*\right)^\prime&\text{ in }L^2(0,T;L^2(\Gamma)),
\\
\phi_m\rightharpoonup \phi^*&\text{ in }L^2(0,T;H^1(\Gamma)),&\hf_m\rightharpoonup \hf^*&\text{ in }L^2(0,T;H^2(\Gamma)),
\end{align}
and applying standard compactness results (Aubin-Lions Lemma and Kondrachov's Theorem) the following convergence results hold in the strong sense,
\begin{align}
\phi_m\to\phi^*&\text{ in }C([0,T];L^2(\Gamma)),&\hf_m\to \hf^*&\text{ in }C([0,T];L^2(\Gamma)),
\\
\phi_m\to \phi^*&\text{ in }L^2(0,T;L^p(\Gamma)),
\label{eqn-strongkon}
\end{align}
where $p\geq 1$. Furthermore since $\phi_m(0)\to\phi^*(0)$ and $\hf_m(0)\to \hf^*$ in $L^2(\Gamma)$ it holds that
\begin{align}
\phi^*(0)&=\phi_0,&\hf^*(0)&=\hf_0.
\end{align}

Taking $\eta\in L^2(0,T;H^1(\Gamma))$,  and $\xi\in L^2(0,T;H^2(\Gamma))$ we have that
\begin{align}
\begin{split}
-\int_{0}^{T}&\alpha_1\left<\phi_m^\prime,\mathcal{P}_m\eta\right>\:{\rm dt}\\
=\int_0^T&\left[\int_{\Gamma}\right.\frac{b}{\epsilon}\left(W^\prime(\phi_m)-\dashint_\Gamma W^\prime(\phi_m)\:{\rm d}\Gamma\right)\mathcal{P}_m\eta+b\epsilon\nabla_\Gamma\phi_m\cdot\nabla_\Gamma\mathcal{P}_m\eta
\\
&
\left.\quad-\kappa\Lambda\nabla_\Gamma \hf_m\cdot\nabla_\Gamma\mathcal{P}_m\eta+\frac{2\kappa\Lambda \hf_m\mathcal{P}_m\eta}{R^2}+\kappa\Lambda^2(\phi_m-\alpha)\mathcal{P}_m\eta\:{\rm d}\Gamma	\right]\:{\rm dt},
\end{split}
\end{align}
and
\begin{align}
\begin{split}
-\int_{0}^{T}&\alpha_2\left<\hf_m^\prime,\mathcal{P}_m\xi\right>\:{\rm dt}\\
=\int_0^T&\left[\int_{\Gamma}\right.\kappa\Delta_\Gamma \hf_m\Delta_\Gamma\mathcal{P}_m\xi+\left(\sigma-\frac{2\kappa}{R^2}\right)\nabla_\Gamma \hf_m\cdot\nabla_\Gamma\mathcal{P}_m\xi
\\
&\left.\quad-\frac{2\sigma \hf_m\mathcal{P}_m\xi}{R^2}+\frac{2\kappa\Lambda(\phi_m-\alpha)\mathcal{P}_m\xi}{R^2}
-\kappa\Lambda\nabla_\Gamma\phi_m\cdot\nabla_\Gamma\mathcal{P}_m\xi\:{\rm d}\Gamma\right]\:{\rm dt},
\end{split}
\end{align}

Using the convergence results \eqref{eqn-pm1}-\eqref{eqn-pm3} and \eqref{eqn-weakprime}-\eqref{eqn-strongkon} we can pass to the limit to obtain
\begin{align}
\begin{split}
-\int_{0}^{T}\alpha_1\left<(\phi^*)^\prime,\eta\right>\:{\rm dt}=\int_0^T\left[\int_{\Gamma}\right.&\frac{b}{\epsilon}\left(W^\prime(\phi^*)-\dashint_\Gamma W^\prime(\phi^*)\:{\rm d}\Gamma\right)\eta+b\epsilon\nabla_\Gamma\phi^*\cdot\nabla_\Gamma\eta
\\
&\left.-\kappa\Lambda\nabla_\Gamma \hf^*\cdot\nabla_\Gamma\eta
+\left.\frac{2\kappa\Lambda \hf^*\eta}{R^2}\right.
+\kappa\Lambda^2(\phi^*-\alpha)\eta\:{\rm d}\Gamma\right]\:{\rm dt},
\end{split}
\end{align}
\begin{align}
\begin{split}
-\int_{0}^{T}\alpha_2\left<(\hf^*)^\prime,\xi\right>\:{\rm dt}=\int_0^T\left[\int_{\Gamma}\right. &\kappa\Delta_\Gamma \hf^*\Delta_\Gamma\xi+\left(\sigma-\frac{2\kappa}{R^2}\right)\nabla_\Gamma \hf^*\cdot\nabla_\Gamma\xi
\\
&\left.-\frac{2\sigma \hf^*\xi}{R^2}+\frac{2\kappa\Lambda(\phi^*-\alpha)\xi}{R^2}-\kappa\Lambda\nabla_\Gamma\phi^*\cdot\nabla_\Gamma\xi\:{\rm d}\Gamma\right]\:{\rm dt},
\end{split}
\end{align}
$\forall\eta\in L^2(0,T;H^1(\Gamma))$, and $\forall\xi\in L^2(0,T;H^2(\Gamma))$. This completes the proof of Theorem \ref{Existence}.
\subsection{Uniqueness Theory}
\begin{theorem}[Uniqueness]
	\label{Uniqueness}
There exists at most one solution pair.
\end{theorem}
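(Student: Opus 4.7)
The plan is a standard energy-based uniqueness argument via Gronwall's inequality. Suppose $(\phi_1, u_1)$ and $(\phi_2, u_2)$ are two solutions in the sense of Theorem \ref{Existence} sharing the same initial data. Define the differences $\phi := \phi_1 - \phi_2$ and $u := u_1 - u_2$. Since both solutions lie in $\mathcal{K}$ for all $t$, the differences satisfy $\int_\Gamma \phi\,{\rm d}\Gamma = 0$ and $u \in \text{span}\{1,\nu_1,\nu_2,\nu_3\}^\perp$, so the constant Lagrange multipliers cancel upon subtracting the two weak formulations, giving a coupled pair of equations for $(\phi, u)$ which are linear in everything except the term $\frac{b}{\epsilon}(W'(\phi_1) - W'(\phi_2))$.

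Next I would test the first difference equation with $\eta = \phi$ and the second with $\xi = u$, which is admissible since $\phi \in L^2(0,T;H^1(\Gamma))$ and $u \in L^2(0,T;H^2(\Gamma))$. After integration by parts, this yields
\begin{align*}
\tfrac{\alpha_1}{2}\tfrac{d}{dt}\|\phi\|_{L^2}^2 &+ b\epsilon\|\nabla_\Gamma\phi\|_{L^2}^2 + \kappa\Lambda^2\|\phi\|_{L^2}^2 + \tfrac{b}{\epsilon}\textstyle\int_\Gamma(W'(\phi_1)-W'(\phi_2))\phi\,{\rm d}\Gamma \\
&+ \kappa\Lambda\textstyle\int_\Gamma\phi\Delta_\Gamma u\,{\rm d}\Gamma + \tfrac{2\kappa\Lambda}{R^2}\textstyle\int_\Gamma\phi u\,{\rm d}\Gamma = 0,
\end{align*}
and analogously for $u$, with terms $\kappa\|\Delta_\Gamma u\|_{L^2}^2$, $(\sigma - \tfrac{2\kappa}{R^2})\|\nabla_\Gamma u\|_{L^2}^2$, a problematic $-\tfrac{2\sigma}{R^2}\|u\|_{L^2}^2$, and the cross terms $\kappa\Lambda\int u\Delta_\Gamma\phi + \tfrac{2\kappa\Lambda}{R^2}\int u\phi$. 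The mean-zero constraint on $\phi$ kills the Lagrange multiplier $\lambda_0$ contribution, and the orthogonality of $u$ to $\text{span}\{1,\nu_1,\nu_2,\nu_3\}$ kills the $\lambda_4,\lambda_1,\lambda_2,\lambda_3$ contributions.

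Adding the two identities, the coupling contributes $2\kappa\Lambda\int\phi\Delta_\Gamma u + \tfrac{4\kappa\Lambda}{R^2}\int\phi u$, which I would rewrite as $-2\kappa\Lambda\int\nabla_\Gamma\phi\cdot\nabla_\Gamma u + \tfrac{4\kappa\Lambda}{R^2}\int\phi u$ via Theorem \ref{By Parts} and bound using Cauchy--Schwarz and Young's inequality to absorb a small fraction of $b\epsilon\|\nabla_\Gamma\phi\|_{L^2}^2$ and of $\|\nabla_\Gamma u\|_{L^2}^2$ (or $\|\Delta_\Gamma u\|_{L^2}^2$ if $\sigma < 2\kappa/R^2$, using the Poincaré-type inequality \eqref{eqn-poincare} available because $u \in \text{span}\{1,\nu_1,\nu_2,\nu_3\}^\perp$). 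The nonlinear term is handled by property (2) of $W$, yielding $\int(W'(\phi_1)-W'(\phi_2))\phi\,{\rm d}\Gamma \geq -c_0\|\phi\|_{L^2}^2$. The bad-sign term $-\tfrac{2\sigma}{R^2}\|u\|_{L^2}^2$ is left on the right as part of the Gronwall source.

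Combining everything produces a differential inequality of the form
\begin{equation*}
\tfrac{d}{dt}\left(\alpha_1\|\phi\|_{L^2}^2 + \alpha_2\|u\|_{L^2}^2\right) \leq C\left(\|\phi\|_{L^2}^2 + \|u\|_{L^2}^2\right),
\end{equation*}
and since $\phi(0) = 0$ and $u(0) = 0$ by the initial condition match, Gronwall's lemma forces $\phi \equiv 0$ and $u \equiv 0$ on $[0,T]$. The main obstacle I anticipate is managing the coupling: the equations are not of variational form for a single unknown, so one must verify that every cross term can be absorbed by the good quadratic terms on the left, and in the regime $\sigma < 2\kappa/R^2$ this requires invoking \eqref{eqn-poincare} to dominate the $\nabla_\Gamma u$ contributions by $\|\Delta_\Gamma u\|_{L^2}^2$, which is the subtle point of the argument.
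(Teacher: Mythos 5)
Your proposal is correct and follows essentially the same route as the paper's proof: subtract the two weak formulations, test with the differences $\theta^\phi$ and $\theta^\hf$, control the nonlinearity via structural property (2) of $W$, absorb the cross terms with Young's inequality and the Poincar\'e-type inequality \eqref{eqn-poincare} (needed precisely when $\sigma-2\kappa/R^2$ is not positive), and conclude with Gronwall's inequality from zero initial differences. No substantive differences from the paper's argument.
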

\begin{proof}
Let $(\phi_i,\hf_i)$, $i=1,2$ be two solution pairs. Set $\theta^\phi=\phi_1-\phi_2$ and $\theta^\hf=\hf_1-\hf_2$. By subtracting the equations, testing with $\eta=\theta^\phi$ and $\xi=\theta^\hf$ and using that
\begin{align}
\frac{d}{dt}\|\theta^\phi\|^2_{L^2(\Gamma)}&=2\left<\left(\theta^\phi\right)^\prime,\theta^\phi\right>,&\frac{d}{dt}\|\theta^\hf\|^2_{L^2(\Gamma)}&=2\left<\left(\theta^\hf\right)^\prime,\theta^\hf\right>,
\end{align}
for a.e. $0\leq t\leq T$
 we obtain
 \begin{align}
 \begin{split}
  -\frac{\alpha_1}{2}\frac{d}{dt}\|\theta^\phi\|^2_{L^2(\Gamma)}=&\int_{\Gamma}\frac{b}{\epsilon}\left(W^\prime(\phi^1)-W^\prime(\phi^2)\right)\theta^\phi\:{\rm d}\Gamma
  +b\epsilon\|\nabla_\Gamma \theta^\phi\|^2_{L^2(\Gamma)}
  \\
  &+\kappa\Lambda^2\|\theta^\phi\|^2_{L^2(\Gamma)}+\int_{\Gamma}\frac{2\Lambda\kappa\theta^\hf\theta^\phi}{R^2}-\Lambda\kappa\nabla_\Gamma\theta^\phi\cdot\nabla_\Gamma\theta^\hf\:{\rm d}\Gamma,
 \end{split}
 \end{align}
 \begin{align}
 \begin{split}
 -\frac{\alpha_2}{2}\frac{d}{dt}\|\theta^\hf\|^2_{L^2(\Gamma)}=&\kappa\|\Delta_\Gamma \theta^\hf\|^2_{L^2(\Gamma)}+\left(\sigma-\frac{2\kappa}{R^2}\right)\|\nabla_\Gamma\theta^\hf\|^2_{L^2(\Gamma)}
 \\
 &-\frac{2\sigma}{R^2}\|\theta^\hf\|^2_{L^2(\Gamma)}+\int_{\Gamma}\frac{2\Lambda\kappa\theta^\hf\theta^\phi}{R^2}-\Lambda\kappa\nabla_\Gamma\theta^\phi\cdot\nabla_\Gamma\theta^\hf\:{\rm d}\Gamma.
 \end{split}
 \end{align}

Using the Poincare type inequality \eqref{eqn-poincare}, structural property (2) of $W(\cdot)$ and Youngs inequality we obtain,
\begin{equation}
\frac{d}{dt}\left(\|\theta^\hf\|^2_{L^2(\Gamma)}+\|\theta^\phi\|^2_{L^2(\Gamma)}\right)+c_1\|\theta^\hf\|^2_{H^2(\Gamma)}+c_2\|\theta^\phi\|^2_{L^2(\Gamma)}
\leq C\left(\|\theta^\hf\|^2_{L^2(\Gamma)}+\|\theta^\phi\|^2_{L^2(\Gamma)}\right),
\end{equation}
where $c_1,c_2$ and $C$ are strictly positive constants. Uniqueness then follows by Gronwall's inequality.
\end{proof}
\subsection{Gradient Flow for the reduced energy}
Returning to consider the reduced energy \eqref{eqn-reducedenergy}, we can likewise obtain the gradient flow equation
\begin{align}
\label{RedGradFlow}
\alpha_1\phi_t+\frac{b}{\epsilon}\left(W^\prime(\phi)-\dashint_\Gamma W^\prime(\phi)\:{\rm d}\Gamma\right)-b\epsilon\Delta_\Gamma\phi+\kappa\Lambda\left(\Delta_\Gamma +\frac{2}{R^2}\right)\mathcal{G}(\mathbf{P}\phi)+\kappa\Lambda^2(\phi-\alpha)=0,
\end{align}
satisfying
\begin{align}
\frac{d}{dt}\widetilde{\mathcal{E}}(\phi)=-\alpha_1\|\phi_t||^2_{L^2(\Gamma)}\leq 0.
\end{align}
However, by defining $u=\mathcal{G}(\mathbf{P}\phi)$ as in \eqref{eqn-reducedHeight} then we obtain the system of equations
\begin{align}
\label{Reduced PDE}
\begin{split}
\alpha_1\phi_t+\frac{b}{\epsilon}\left(W^\prime(\phi)-\dashint_\Gamma W^\prime(\phi)\:{\rm d}\Gamma\right)-b\epsilon\Delta_\Gamma\phi+\kappa\Lambda\left(\Delta_\Gamma +\frac{2}{R^2}\right)u+\kappa\Lambda^2(\phi-\alpha)=0,\\
-\Delta_\Gamma u+\frac{\sigma}{\kappa}u=\Lambda\mathbf{P}\phi
\end{split}
\end{align}
which coincides with \eqref{AC PDE} in the case $\alpha_2=0$. In this instance we can again apply a Galerkin approximation and obtain the \emph{a priori} bounds
\begin{align}
\|\phi_m\|_{L^\infty(0,T;H^1(\Gamma))}\leq C,
\\
\|\hf_m\|_{L^\infty(0,T;H^2(\Gamma))}\leq C,
\\
\|\phi^\prime_m\|_{L^2(0,T;L^2(\Gamma))}\leq C.
\end{align}
From these estimates existence and uniqueness can be shown analagously to Theorem \ref{Existence} and Theorem \ref{Uniqueness}. 
The case $\alpha_2=0$ can be physically understood as instantaneous relaxation of the surface energy.

\section{Numerical Simulations}
In this section we present some numerical results for the longtime behaviour  of the system of PDEs given by \eqref{Reduced PDE}. We suppose the double well potential is given by
\begin{align}
W(r)=\frac{1}{4}(r^2-1)^2.
\end{align}
This choice of $W(\cdot)$ satisifies the structural assumptions given earlier. 

\subsection{Numerical Scheme}
We implement an iterative method as follows. Given a solution $\left(\phi^{(n)},\hf^{(n)}\right)$ at the previous time 
step we consider a sequence $\{\phi_k,\hf_k,\lambda_k\}_{k=1}^\infty$ where $(\phi_k,\hf_k)$ is a solution to
\begin{align}
\label{secant1}
\begin{split}
\int_{\Gamma}&\frac{\phi_k-\phi^{(n)}}{\tau}\eta+\frac{b}{\epsilon}W^{\prime\prime}\left(\phi^{(n)}\right)\left(\phi_k-\phi^{(n)}\right)
+\frac{b}{\epsilon}W^\prime\left(\phi^{(n)}\right)\eta
\\
&+b\epsilon\nabla_\Gamma \phi_k\cdot\nabla_\Gamma\eta-\kappa\Lambda\nabla_\Gamma \hf_k\cdot\nabla_\Gamma\eta+\frac{2\kappa\Lambda}{R^2}\hf_k\eta-\lambda_k\eta+\Lambda^2\kappa(\phi_k-\alpha)\:{\rm d}\Gamma=0,
\end{split}
\end{align}
\begin{align}
\label{secant2}
\begin{split}
\int_{\Gamma}\frac{\sigma}{\kappa}u_k\chi+\nabla_\Gamma u_k\cdot\nabla_\Gamma\chi-\Lambda(\phi_k-\alpha)\chi\:{\rm d}\Gamma&=0,
\end{split}
\end{align}
where in the above, a linearisation has been used for $W^\prime$. The mean value constraint on the height function 
is directly enforced by \eqref{secant2} provided $\sigma\neq 0$. The mean value constraint on $\phi$ is imposed by the secant method, (following \cite{BloEll93-a}),  using the sequence $\{\lambda_{k}\}_{k\geq 1}$ which is constructed  as follows
\begin{displaymath}
\lambda_{k+1}=\lambda_k+\frac{(\lambda_k-\lambda_{k-1})\left(\alpha-\int_{\Gamma}\phi_k\right)}{\left(\int_{\Gamma}\phi_k-\int_{\Gamma}\phi_{k-1}\right)}.
\end{displaymath}
with $\lambda_1=-\frac{b}{\epsilon}$ and $\lambda_2=\frac{b}{\epsilon}$. We stop the iteration when $|\lambda_{k+1}-\lambda_k|<tol$ and set $\phi^{(n+1)}=\phi_{k+1}$ and $\hf^{(n+1)}=\mathbf{P}\hf_{k+1}$. We note that it is not neccessary to consider $\mathbf{P}u_k$ in order to obtain $\phi_k$ since $\left(\Delta_\Gamma+\frac{2}{R^2}\right)\mathbf{P}u_k=\left(\Delta_\Gamma+\frac{2}{R^2}\right)u_k$.

DUNE software was used to implemement a surface finite element method. Specifically we used a PYTHON module (c.f. \cite{dedner2018dune}) which implemented a GMRES method with ILU preconditing to solve the system of linear equations \eqref{secant1}-\eqref{secant2}. For the secant iteration we set $tol=10^{-8}$ and for the GMRES iteration we set the residual tolerance and absolute tolerance both to $10^{-10}$. For the case $\sigma=0$ we additionally used a nullspace method from PETSc \cite{dalcin2011parallel,petsc-user-ref,petsc-efficient}.

Unless stated otherwise, we used a base grid containing 1026 vertices, and at each time step applied an adaptive grid method on each element $K$ if the condition
\begin{align}
\label{eqn:adaptive-grid}
\|\nabla\phi\|_{L^\infty(K)}>\frac{\mu\epsilon}{|K|},
\end{align} 
is satisfied, where $\mu=0.05$. For most of our simulations we will use $\epsilon=0.02$ which typically leads to a grid consisting of around 30,000 vertices. Figure  \ref{fig:adaptive-grid} illustrates an example of such a grid around an interface.
\begin{figure}
	\centering
	\includegraphics[width=0.4\linewidth]{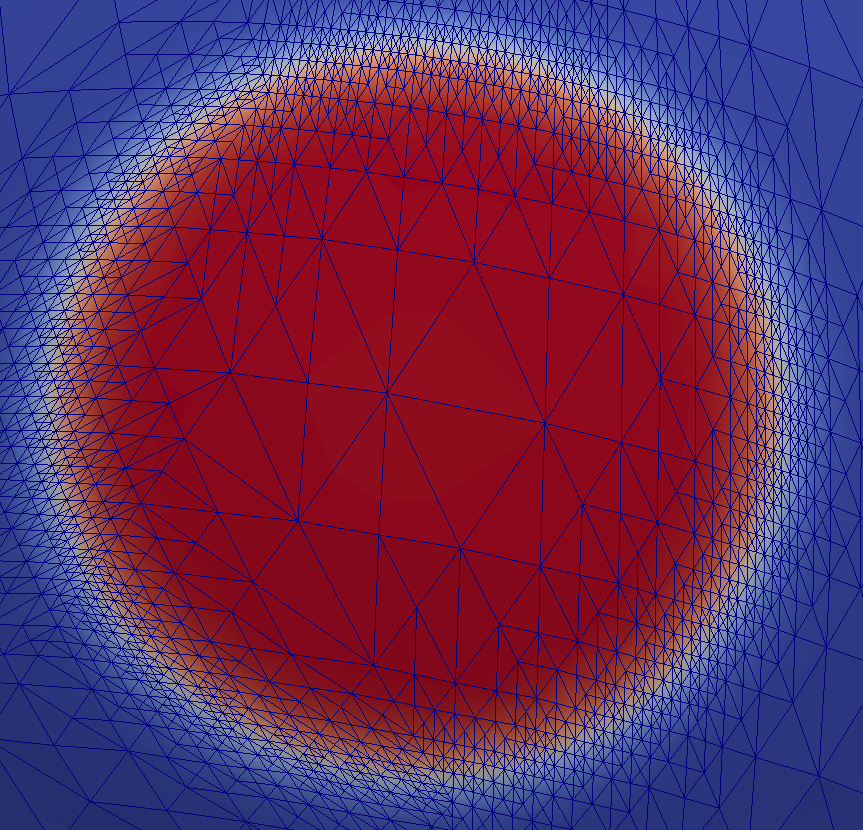}
	\caption{An example of how the adaptive grid method given in \eqref{eqn:adaptive-grid} resolves the interface for the case $\epsilon=0.02$.}
	\label{fig:adaptive-grid}
\end{figure}

 We also used an adaptive time stepping strategy initially using a uniform time step while phase separation occured and then using an adaptive time step (within bounds) that is inversely proportional to
\begin{align}
\max_{x\in\Gamma_h}\frac{\left|\phi_h^{(m)}(x)-\phi_h^{(m-1)}(x)\right|}{\tau^{(m)}},
\end{align}
which should be interpreted as the normal velocity of the interface. 

To graphically represent the numerical solutions, we deform the surface as described by \eqref{eqn-deform}. Here, for visualisation purposes we exagerate the size of the deformation $u_h$ by setting $\rho=1$ whereas in reality it should be significantly smaller. The colouring of the resulting surface is given by $\phi_h$ with red indicating $+1$ regions and blue $-1$ regions.
\subsection{Stabilisation of multiple domains}
We first explore whether there exists stable steady state solutions composed of multiple lipid rafts ($+1$ phase domains), a property observed in biological membranes. We choose $\kappa=1, R=1, b=1, \epsilon=0.02$ and $\sigma=10$, and use a uniform time step of $\tau= 10^{-2}$. We choose initial conditions with an increasing number of lipid rafts and investigate the impact of varying the spontaneous curvature $\Lambda$, which acts as the coupling parameter between the phasefield and the deformation. In each case the inital conditions are chosen such that $\alpha=-0.5$.

\begin{figure}
	\centering
	\begin{subfigure}{.24\linewidth}
		\centering
		\includegraphics[width=\linewidth]{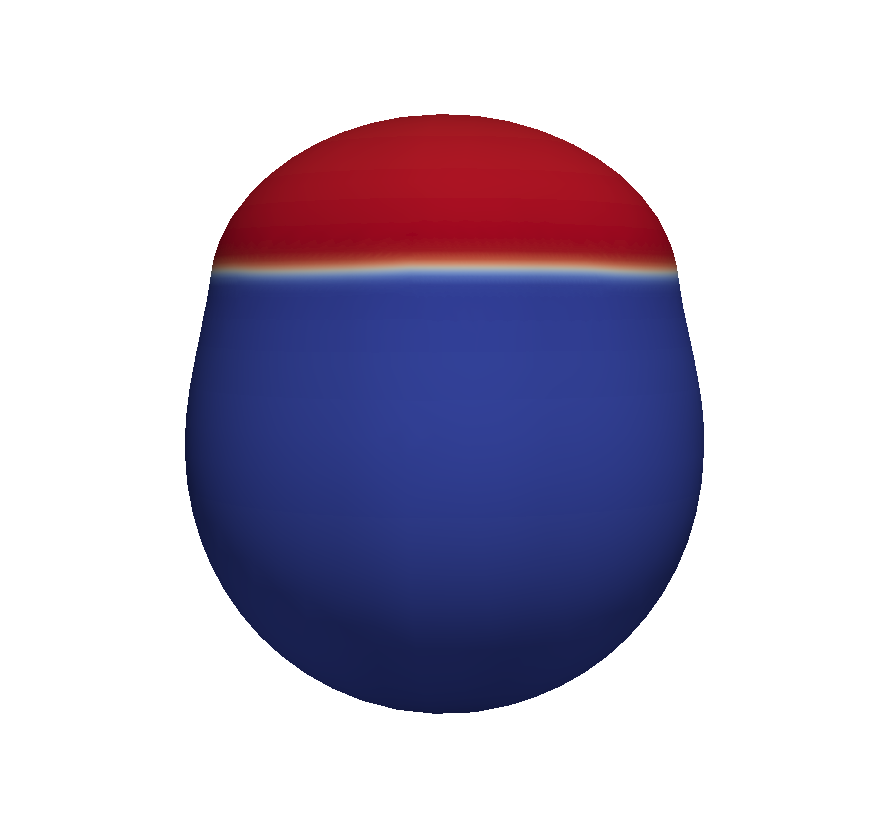}
		\caption{N=1}
	\end{subfigure}
	\begin{subfigure}{.24\linewidth}
		\centering
		\includegraphics[width=\linewidth]{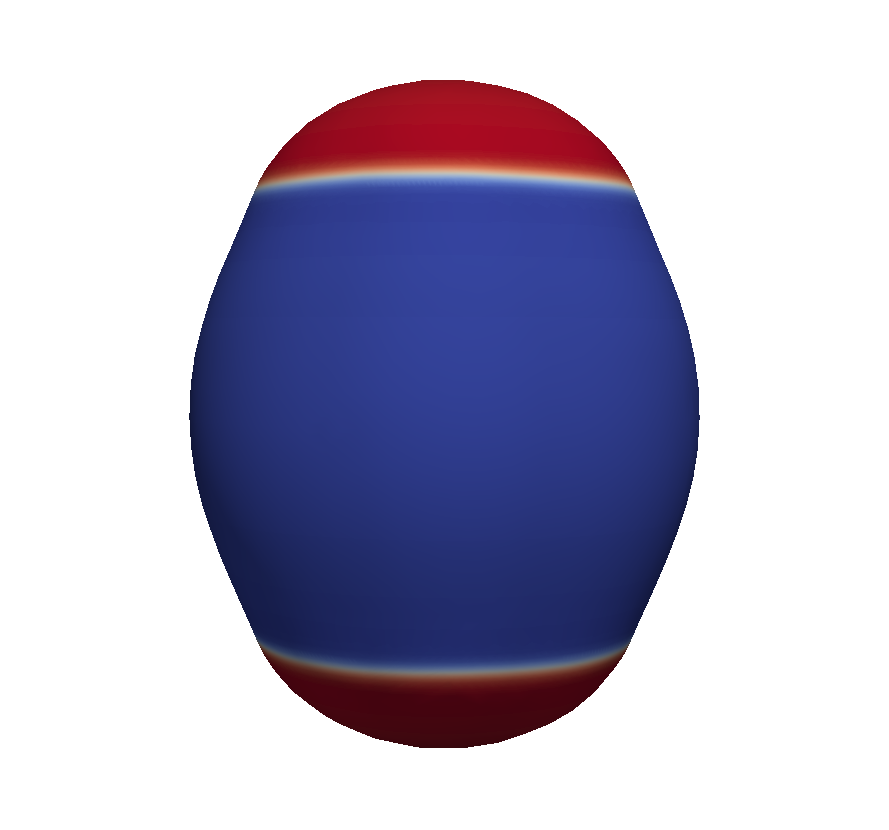}
		\caption{N=2}
	\end{subfigure}
	\begin{subfigure}{.24\linewidth}
		\centering
		\includegraphics[width=\linewidth]{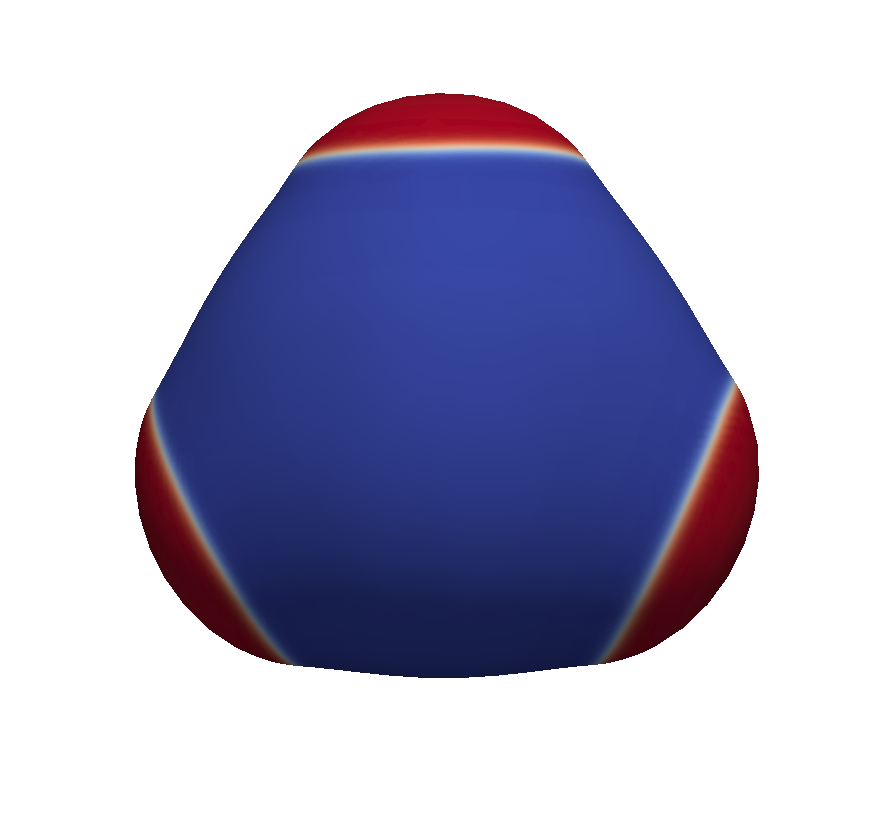}
		\caption{N=3}
	\end{subfigure}
	\begin{subfigure}{.24\linewidth}
		\centering
		\includegraphics[width=\linewidth]{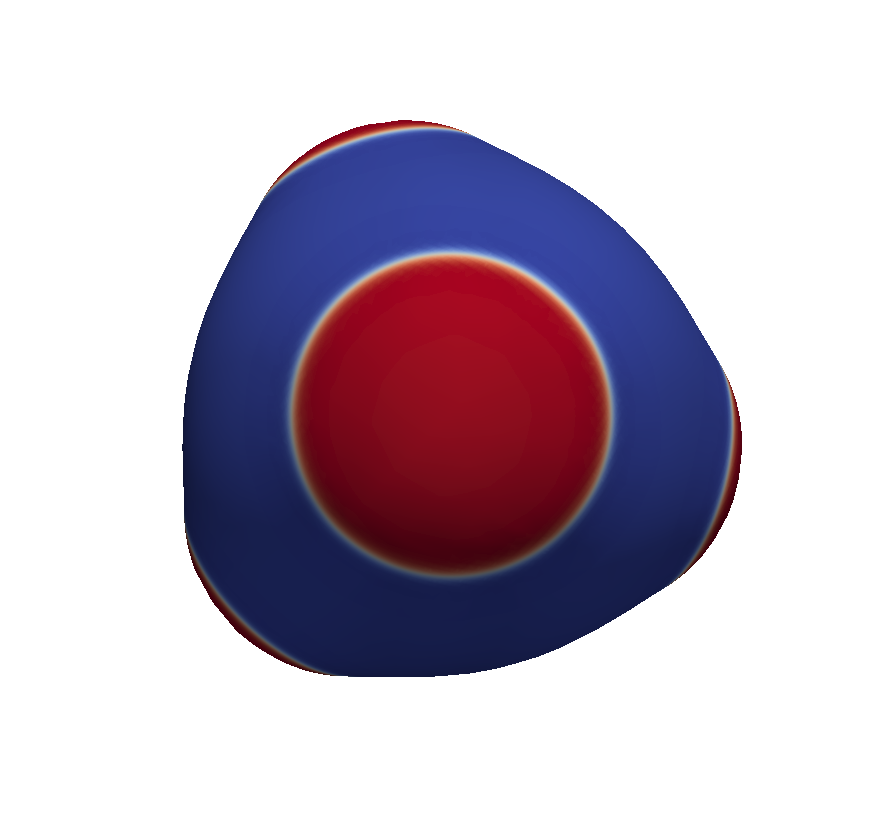}
		\caption{N=4}
	\end{subfigure}
	\caption{Stablised steady states solutions of N domains for $\Lambda=2$.}
	\label{fig-Stabilisation}
\end{figure}
\begin{figure}
	\centering
	\includegraphics[width=0.45\linewidth]{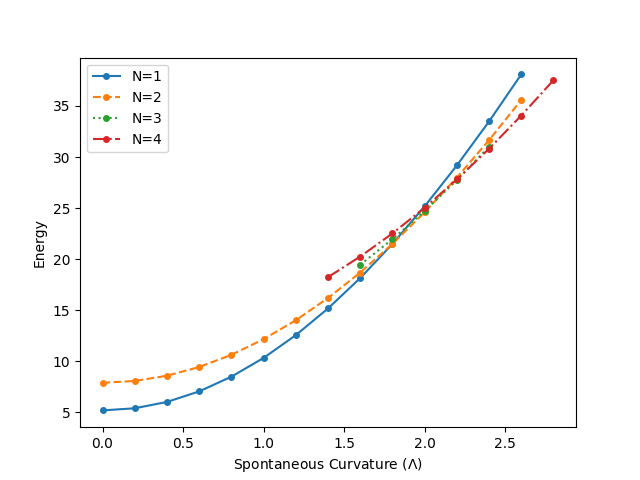}
	\includegraphics[width=0.45\linewidth]{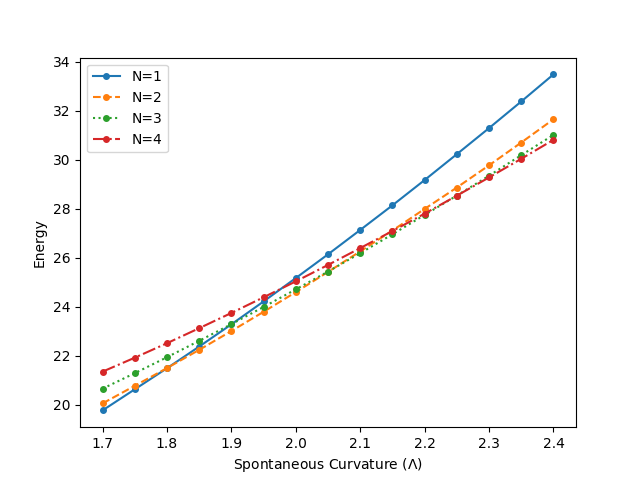}
	\caption{Energy dependence of steady state solutions of $N$ lipid raft domains on spontaneous curvature, $\Lambda$. The graph on the right is a zoomed in version of the graph on the left on an area of interest.}
	\label{fig-Energy-Lambda}
\end{figure}

In Figure \ref{fig-Stabilisation} we depict stabilised steady state solutions consisting of $N$ lipid raft domains for $\Lambda=2$. In  Figure \ref{fig-Energy-Lambda} we plot the energy \eqref{eqn-peturb-energy} against spontaneous curvature $\Lambda$ for the corresponding steady state solutions. The $\Lambda$ values considered were $0, 0.2, 0.4, 0.6 ...$. This was not possible in all cases. For each $\Lambda$ value where no corresponding energy $\mathcal{E}$ has been plotted in Figure \ref{fig-Energy-Lambda} indicates that a state consiting of $N$ lipid raft domains was not a steady state solution. For example the case $N=1$ and $\Lambda=2.8$ is illustrated in Figure \ref{fig-instability}.
\begin{figure}
	\centering
	\begin{subfigure}{.24\linewidth}
		\centering
		\includegraphics[width=\linewidth]{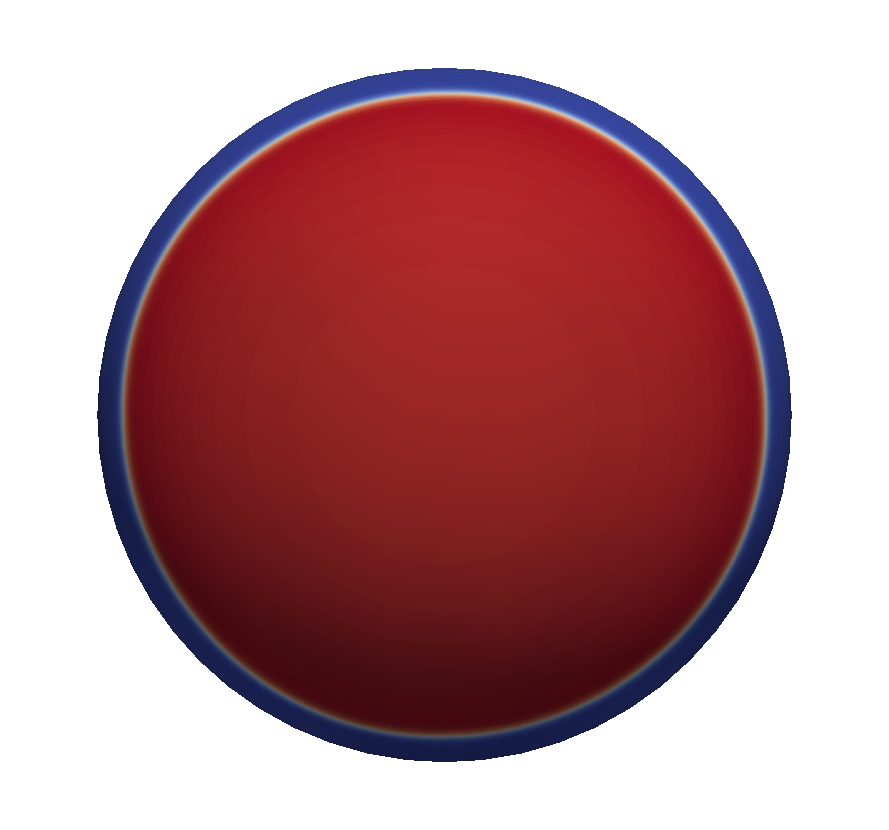}
		\caption{$\phi_h(\cdot,t=0)$}
	\end{subfigure}
	\begin{subfigure}{.24\linewidth}
		\centering
		\includegraphics[width=\linewidth]{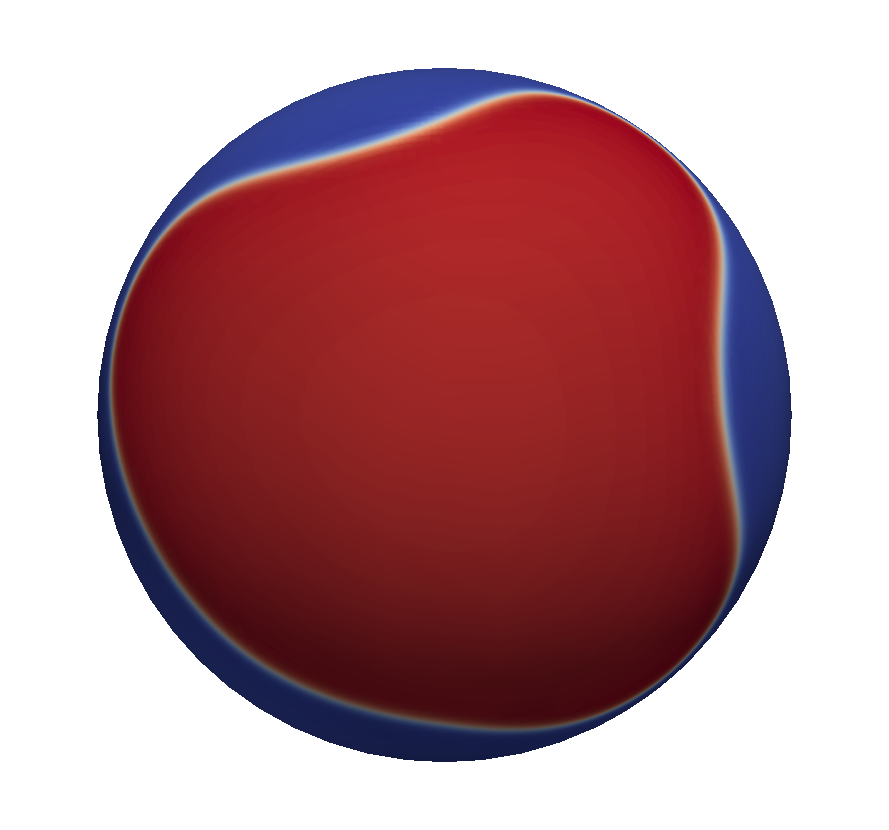}
		\caption{$\phi_h(\cdot,t=60)$}
	\end{subfigure}
	\begin{subfigure}{.24\linewidth}
		\centering
		\includegraphics[width=\linewidth]{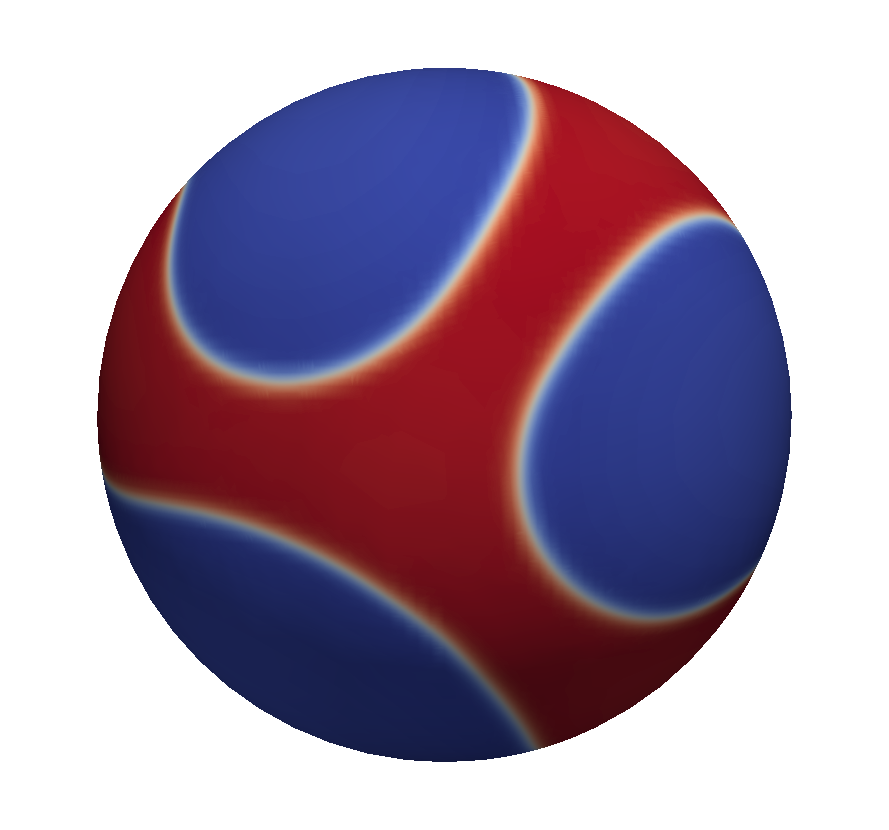}
		\caption{$\phi_h(\cdot,t=80)$}
	\end{subfigure}
	\begin{subfigure}{.24\linewidth}
		\centering
		\includegraphics[width=\linewidth]{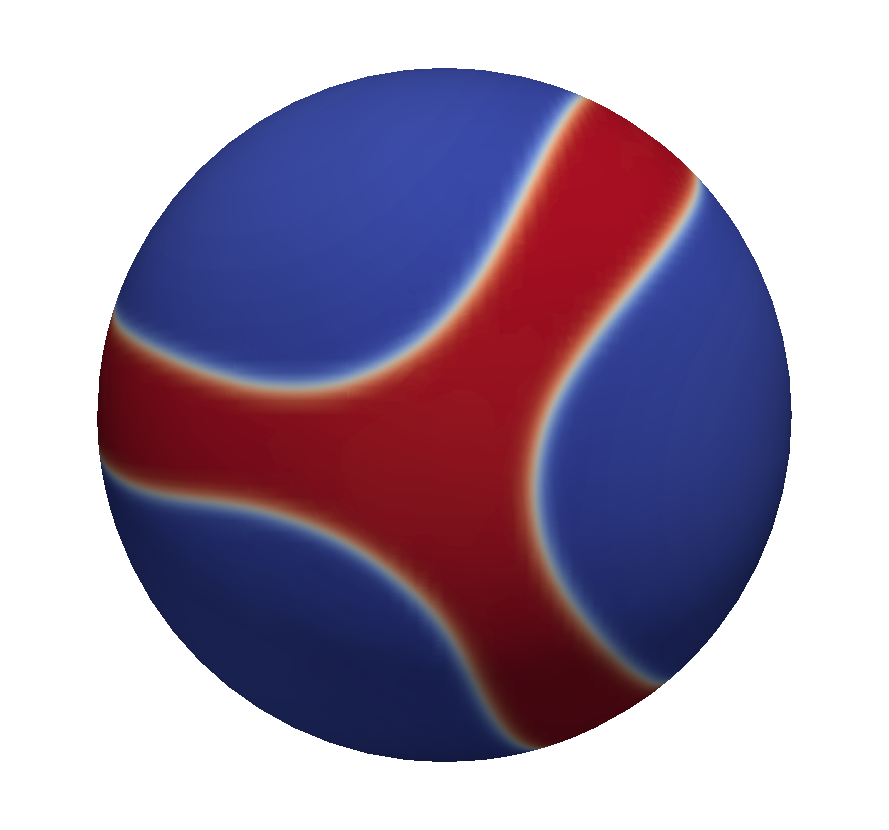}
		\caption{$\phi_h(\cdot,t=200)$}
	\end{subfigure}
	\caption{Unstable state which transitions from 1 domain towards 4 domains. Here for visualiation purposes we don't apply the deformation $u$.}
	\label{fig-instability}
\end{figure}
\subsection{Width of interface, $\epsilon$}
Since we approximated the line tension by the Ginzburg-Landau energy functional, we wish to check that in the limit $\epsilon\to 0$ we see a tightening on the width of the diffuse interface. This is confirmed in Figure \ref{fig:epsilon} where the initial condition was chosen to have icosahedral rotational symmetry, and parameter values $\kappa=1, R=1, b=1, \Lambda=5$ and $\sigma=1$ were used.
\begin{figure}
	\centering
	\begin{subfigure}{.24\linewidth}
		\centering
		\includegraphics[width=\linewidth]{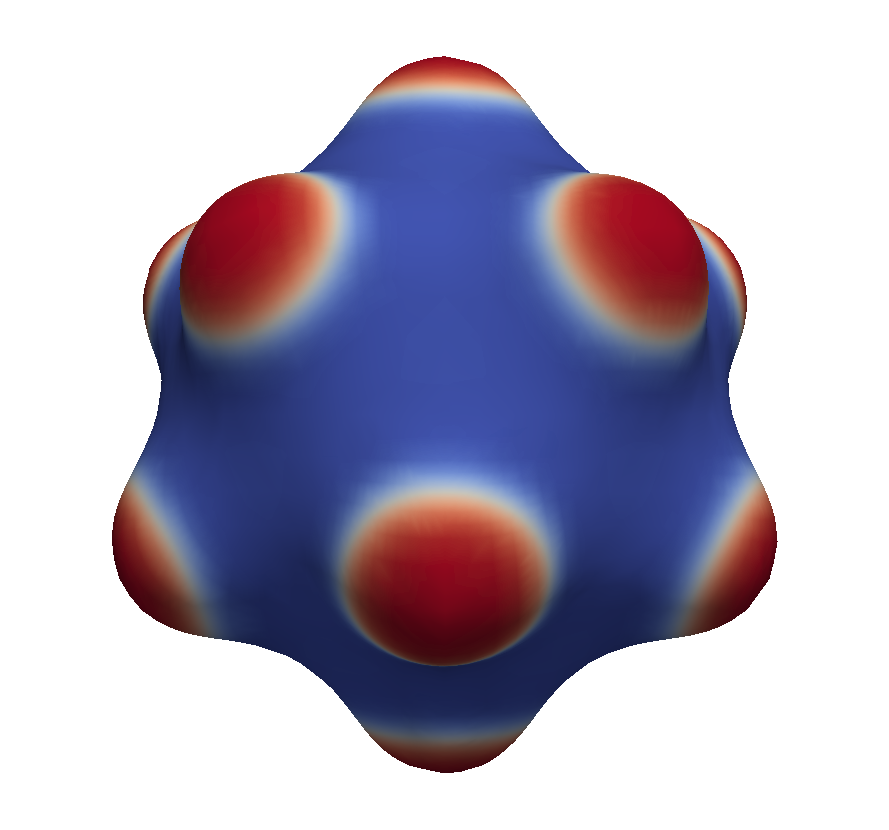}
		\caption{$\epsilon=0.04$}
	\end{subfigure}
	\begin{subfigure}{.24\linewidth}
		\centering
		\includegraphics[width=\linewidth]{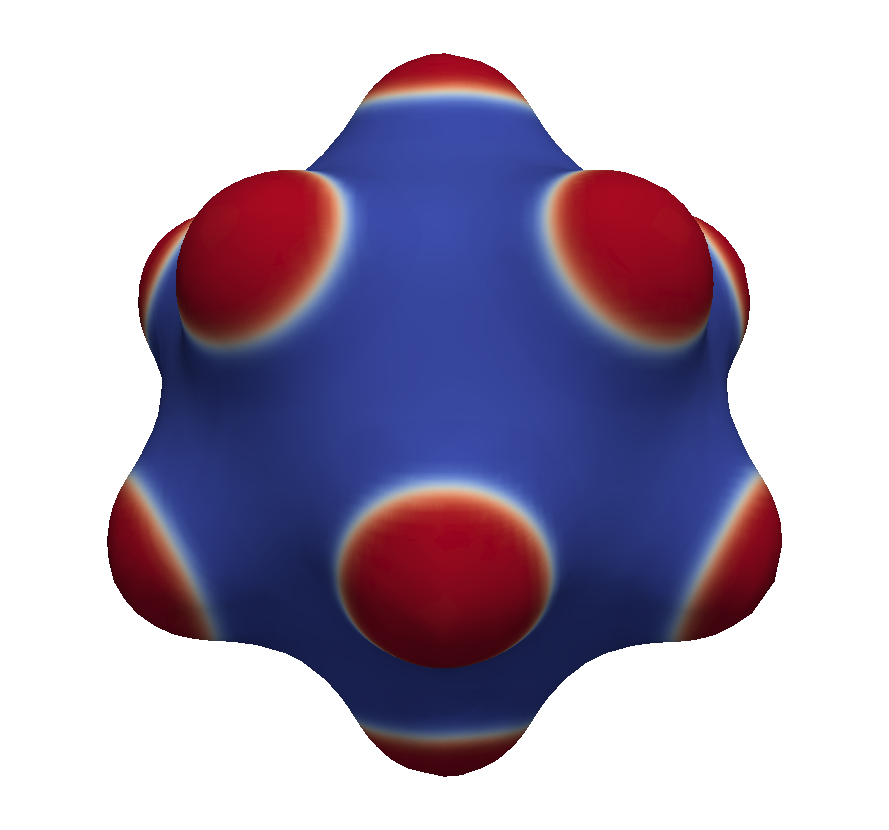}
		\caption{$\epsilon=0.02$}
	\end{subfigure}
	\begin{subfigure}{.24\linewidth}
		\centering
		\includegraphics[width=\linewidth]{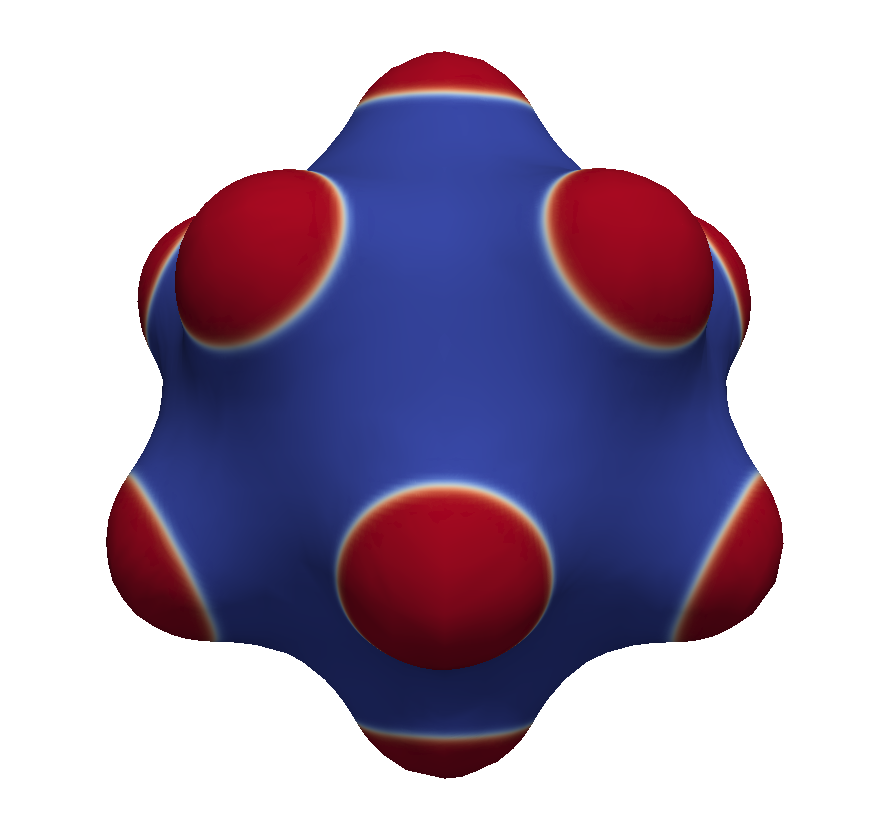}
		\caption{$\epsilon=0.01$}
	\end{subfigure}
	\begin{subfigure}{.24\linewidth}
		\centering
		\includegraphics[width=\linewidth]{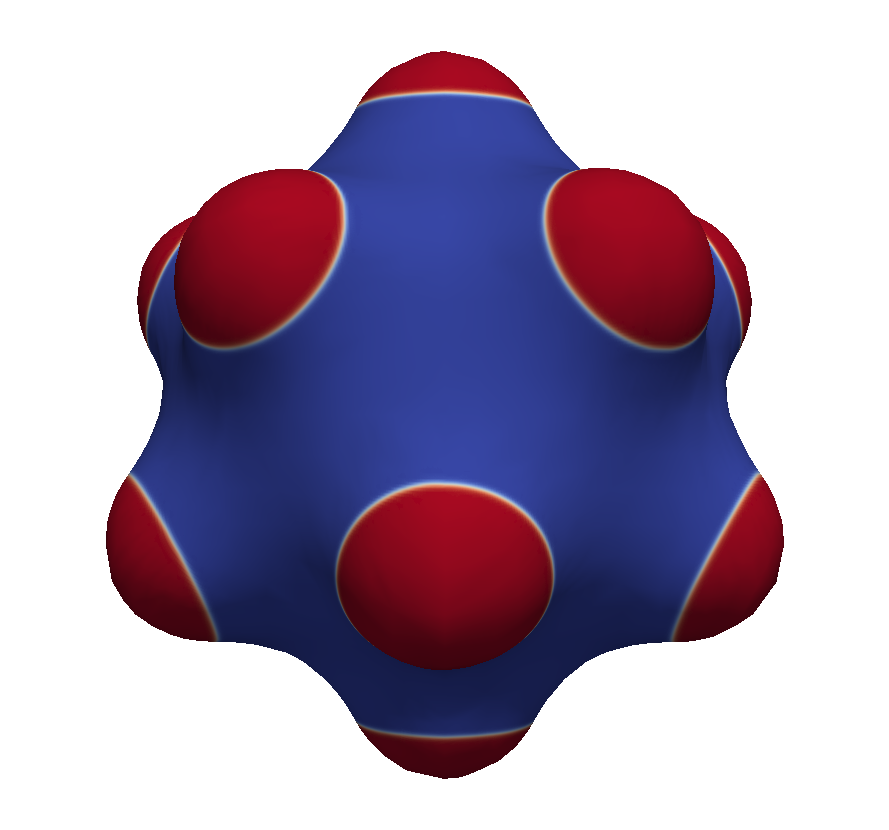}
		\caption{$\epsilon=0.005$}
	\end{subfigure}
	\caption{Almost stationary discrete solutions for varying the width of the interface, $\epsilon$.}
	\label{fig:epsilon}
\end{figure}


 
\subsection{Long time behaviour}
Starting with an initial condition of the form $\phi(\cdot,t=0)=\alpha+\mathcal{R}$ where $\mathcal{R}$ is a given small mean zero random perturbation, we investigate the long time behaviour for varying the different parameters from which a number of interesting geometric features arise.

To start with we set $R=1$ and $\epsilon=0.02$ and consider the parameters $\Lambda=5$, $b=1$, $\alpha=-0.5$, $\sigma=1$ and $\kappa=1$ as a base case, and vary each parameter in turn. Figure \ref{fig:time-evolution} gives a series of snapshots of how the solution varies in time towards an almost stationary state solution, in this case consisting of 12 lipid rafts. Since we have seen that for the same set of parameters it is possible for differing numbers of lipid rafts to stablise, we can't conclude this is a global minimiser, but is indicative of general trends that can be observed for varying certain parameters, e.g. the number of lipid rafts. 
\begin{figure}
	\centering
	\begin{subfigure}{.24\linewidth}
		\centering
		\includegraphics[width=\linewidth]{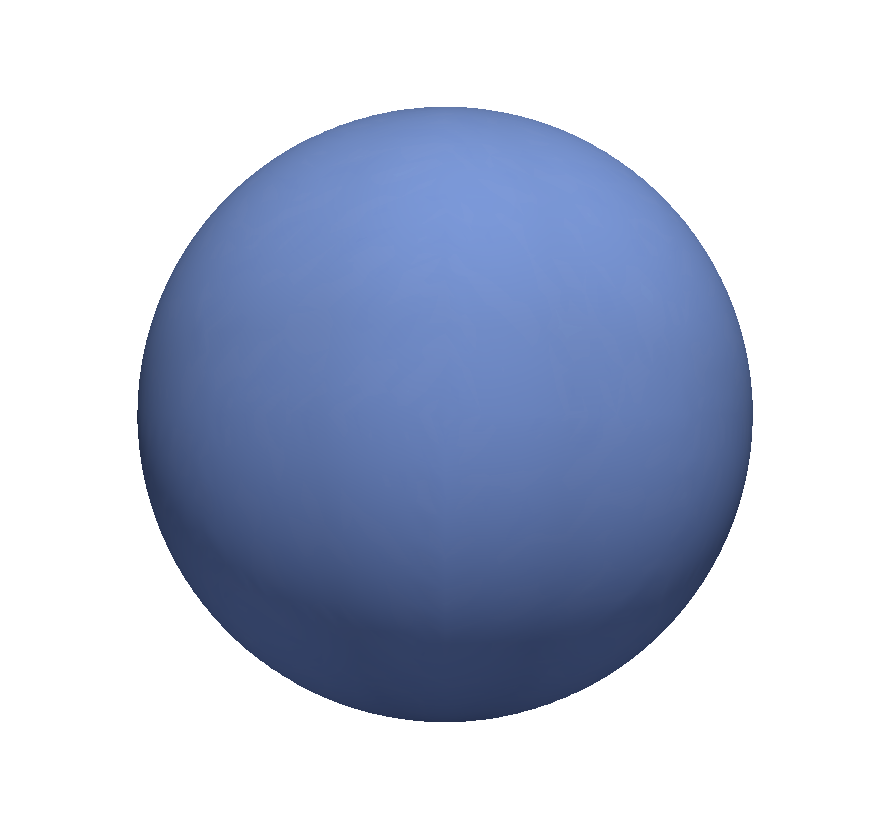}
		\caption{$\phi_h(\cdot,t=0)$}
	\end{subfigure}
	\begin{subfigure}{.24\linewidth}
		\centering
		\includegraphics[width=\linewidth]{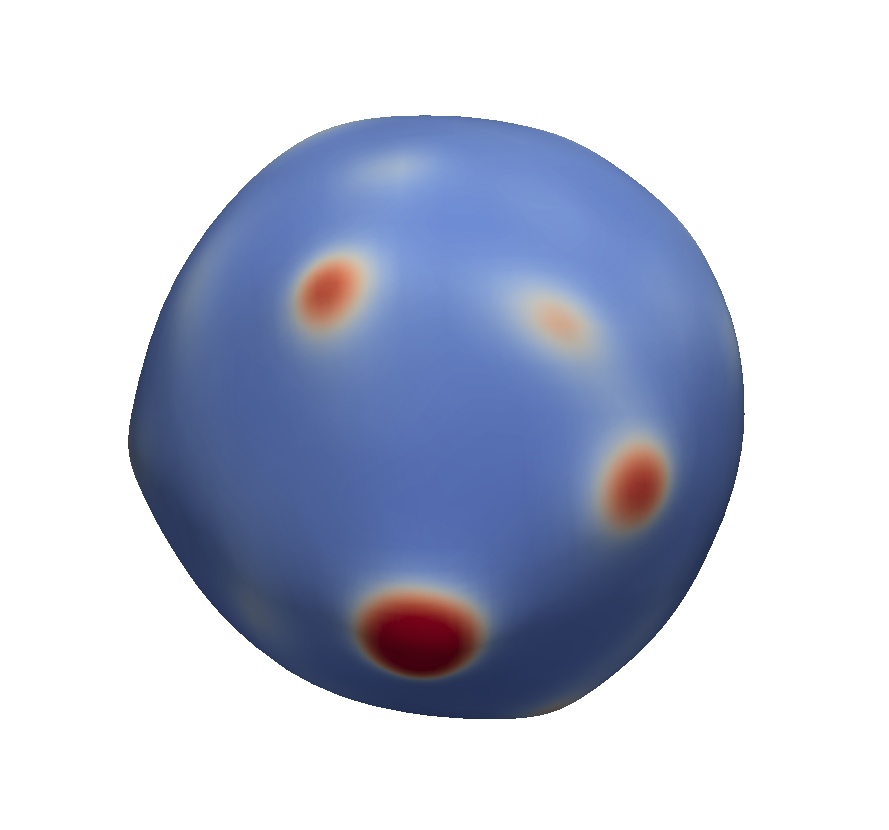}
		\caption{$\phi_h(\cdot,t=0.4)$}
	\end{subfigure}
	\begin{subfigure}{.24\linewidth}
		\centering
		\includegraphics[width=\linewidth]{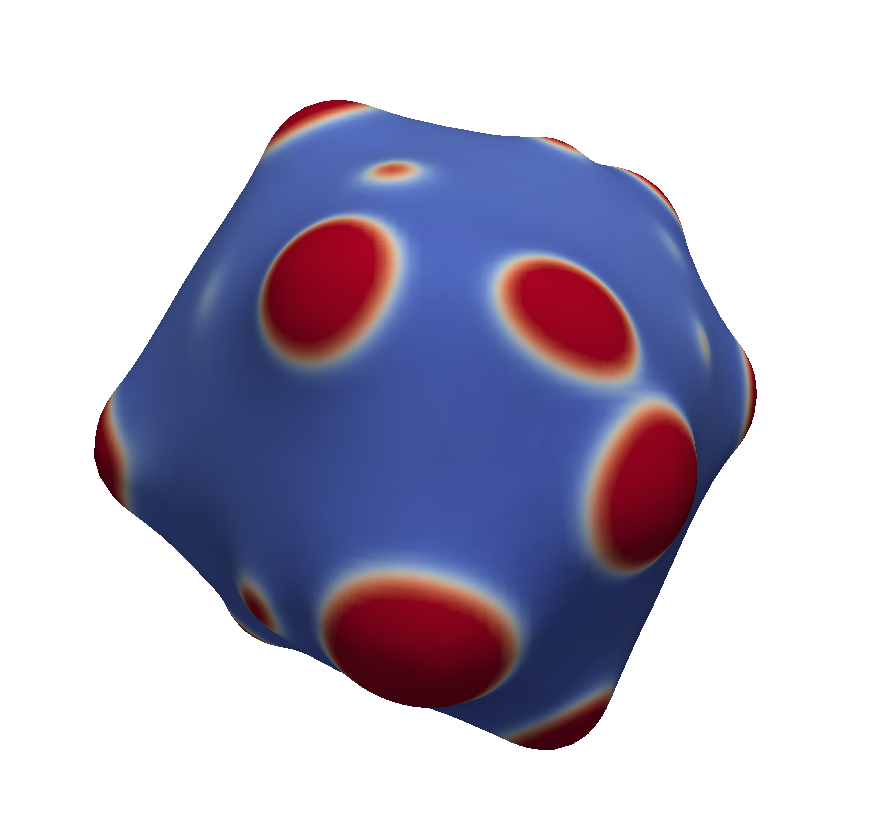}
		\caption{$\phi_h(\cdot,t=0.5)$}
	\end{subfigure}\\
	\begin{subfigure}{.24\linewidth}
		\centering
		\includegraphics[width=\linewidth]{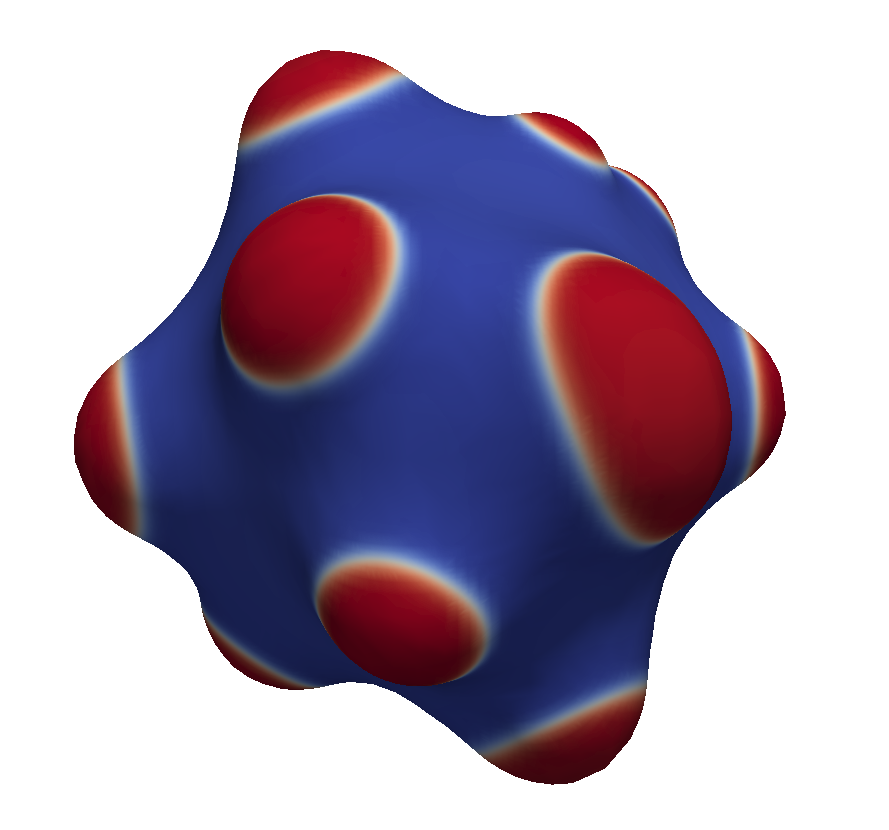}
		\caption{$\phi_h(\cdot,t\approx3.565)$}
	\end{subfigure}
	\begin{subfigure}{.24\linewidth}
		\centering
		\includegraphics[width=\linewidth]{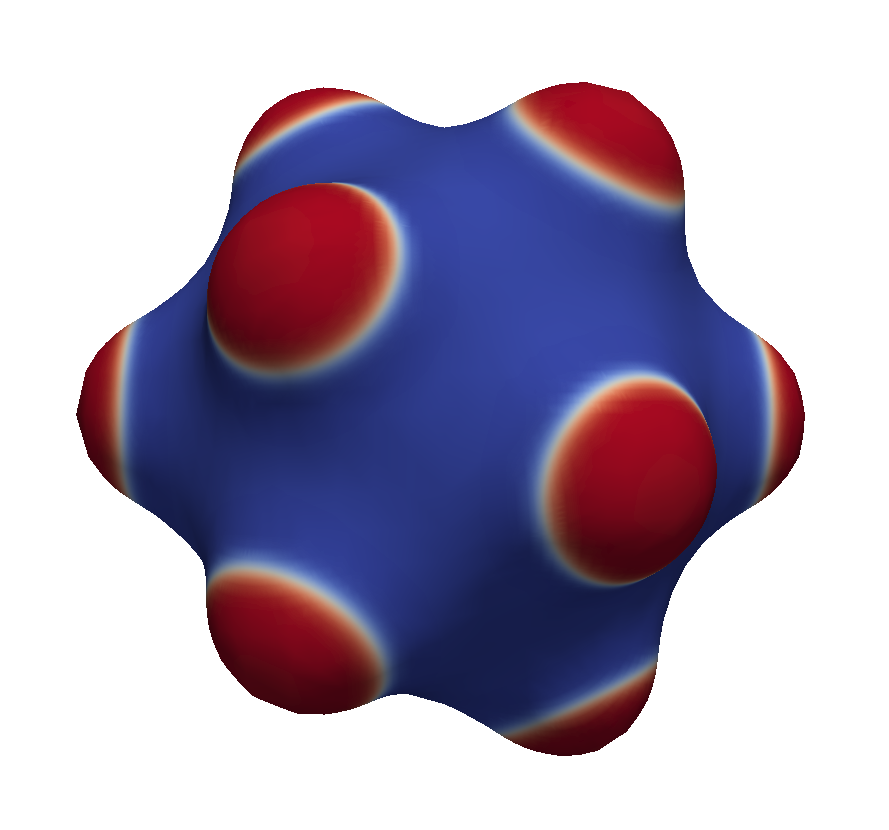}
		\caption{$\phi_h(\cdot,t\approx115.565)$}
	\end{subfigure}
	\begin{subfigure}{.24\linewidth}
		\centering
		\includegraphics[width=\linewidth]{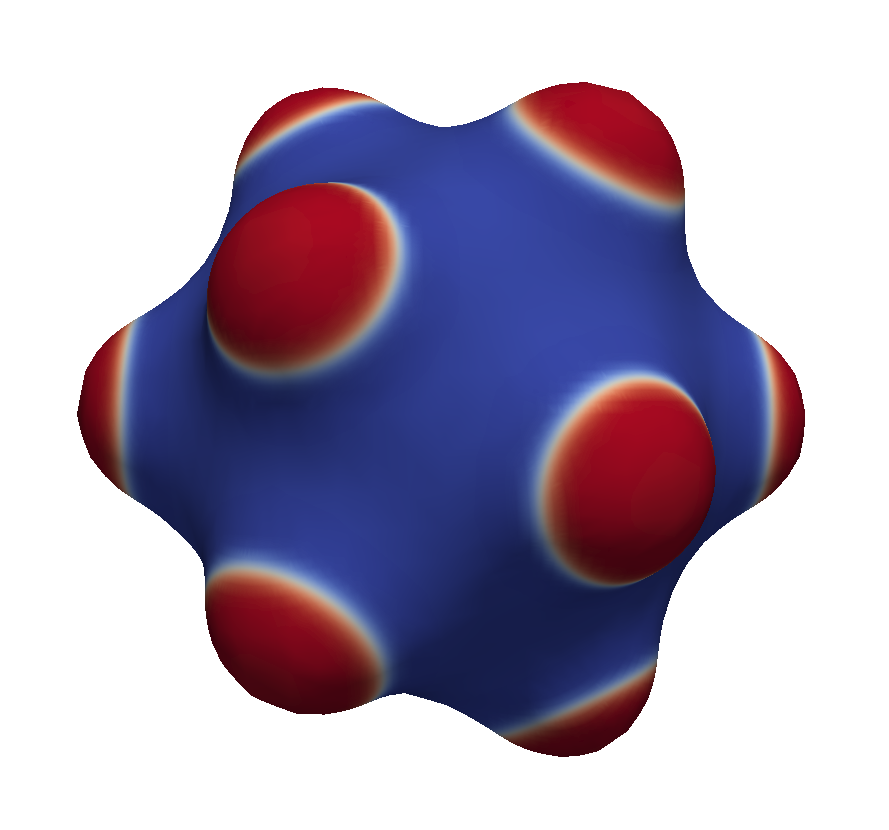}
		\caption{$\phi_h(\cdot,t\approx515.565)$}
	\end{subfigure}
	\caption{The time evolution for initial condition $\phi(\cdot,t=0)=-0.5+\mathcal{R}$ with parameters given by $\Lambda=5$, $b=1$, $\sigma=1$ and $\kappa=1$.}
	\label{fig:time-evolution}
\end{figure}
\subsubsection{Spontaneous curvature, $\Lambda$}
In the case $\Lambda=0$, then there is no coupling so $u=0$ for all time, and $\phi$ evolves according to a conserved Allen-Cahn equation. We observe that as $|\Lambda|$ increases so do the number of lipid rafts, see Figure \ref{fig:Coupling}. This is not surprising since to minimise the energy $\mathcal{E}$, larger $\Lambda$ corresponds to increased curvature. As expected the energy $\mathcal{E}$ coincides for positive and negative values of $\Lambda$ since switching the sign of $\Lambda$  amounts to switching the sign of $u$, which leaves $\mathcal{E}$ unchanged. Further details are given in Table \ref{table:Lambda}.
\begin{figure}
\parbox{.24\linewidth}{\begin{subfigure}{.9\linewidth}
		\centering
		\includegraphics[width=\linewidth]{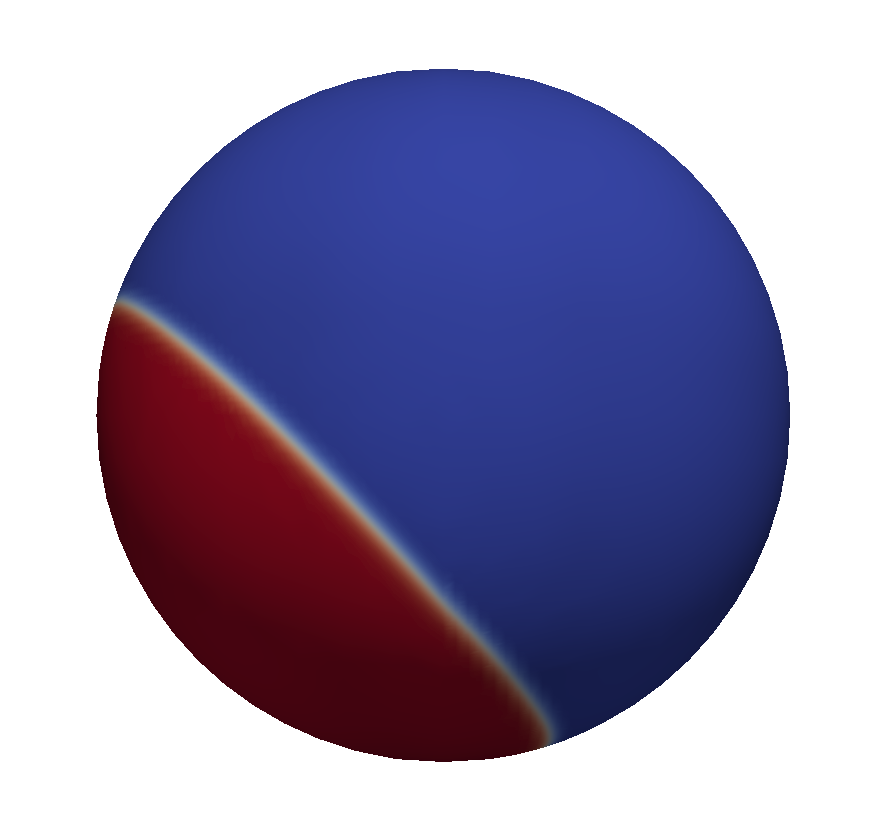}
		\caption{$\Lambda=0$}
	\end{subfigure}}
	\parbox{0.72\linewidth}{
		\begin{subfigure}{.3\linewidth}
			\centering
			\includegraphics[width=\linewidth]{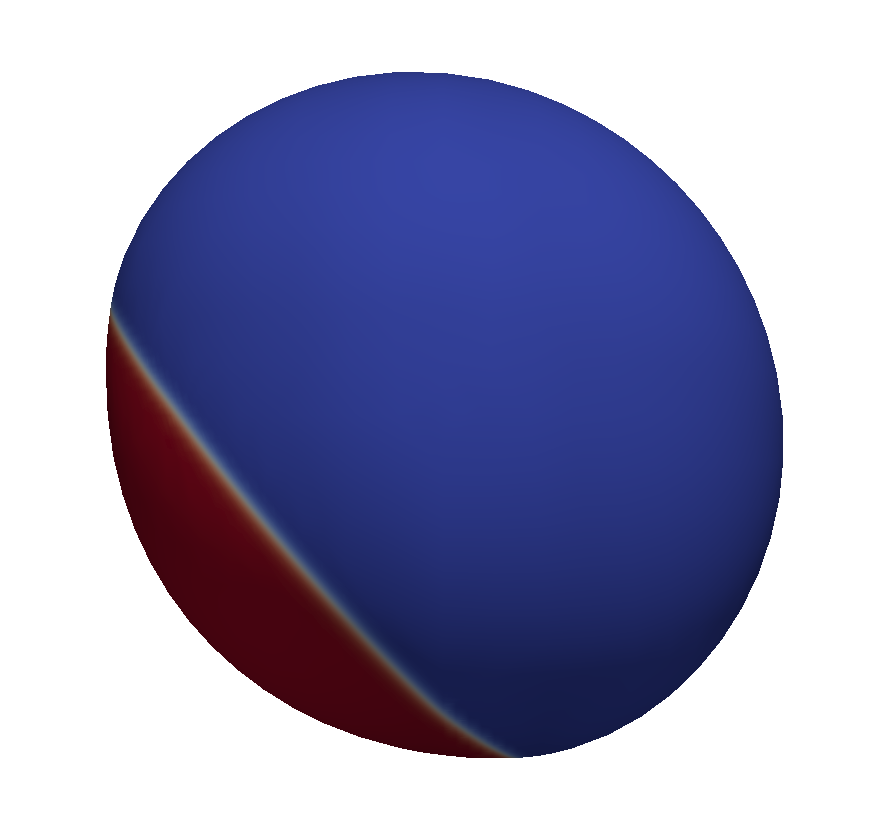}
			\caption{$\Lambda=-0.5$}
		\end{subfigure}
		\begin{subfigure}{.3\linewidth}
			\centering
			\includegraphics[width=\linewidth]{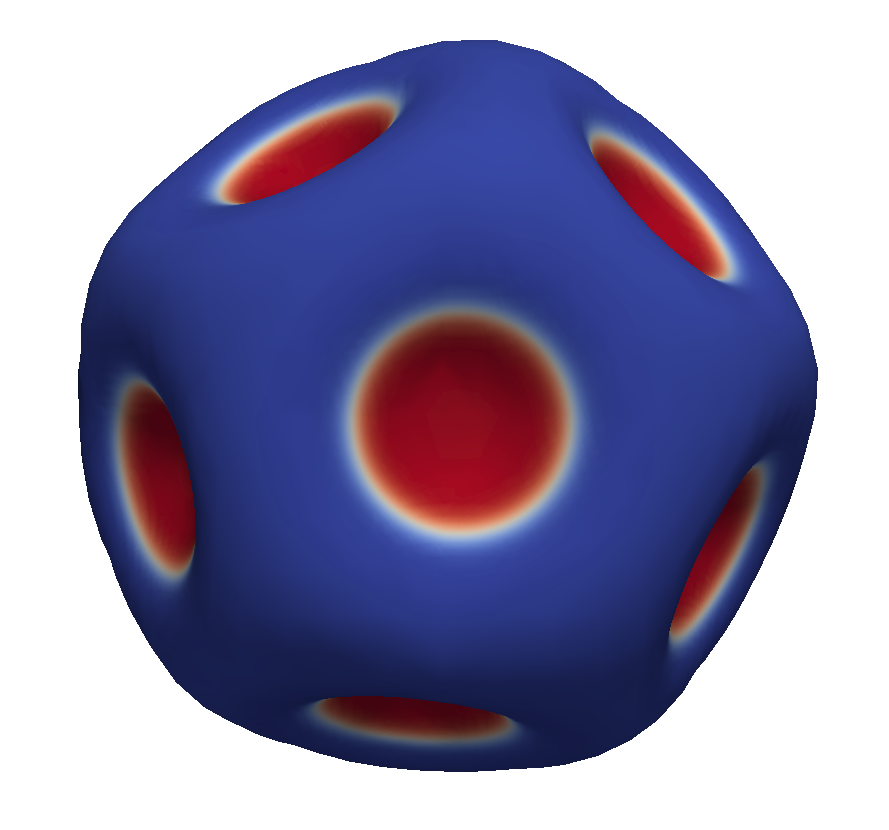}
			\caption{$\Lambda=-5$}
		\end{subfigure}
		\begin{subfigure}{.3\linewidth}
			\centering
			\includegraphics[width=\linewidth]{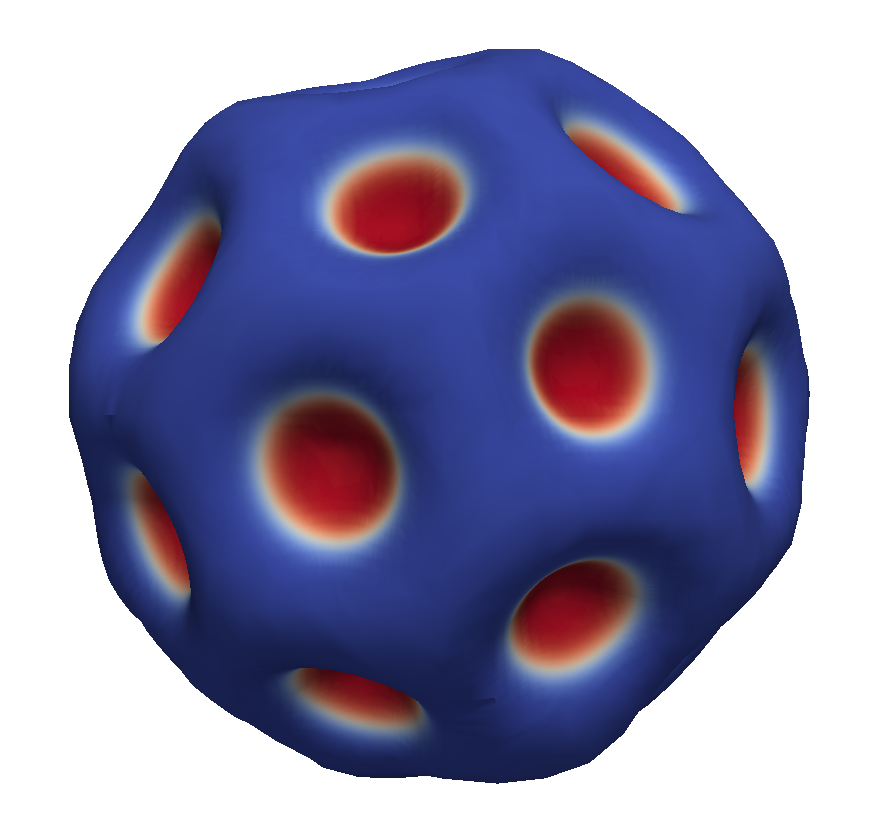}
			\caption{$\Lambda=-10$}
		\end{subfigure}
		\\
		\begin{subfigure}{.3\linewidth}
			\centering
			\includegraphics[width=\linewidth]{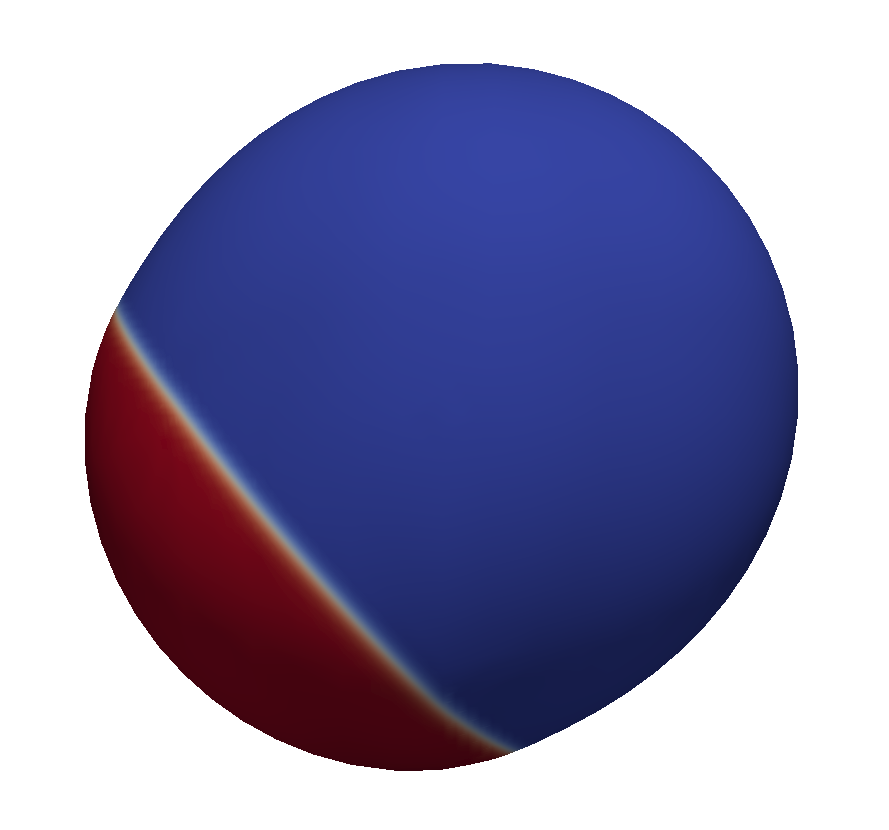}
			\caption{$\Lambda=0.5$}
		\end{subfigure}
		\begin{subfigure}{.3\linewidth}
			\centering
			\includegraphics[width=\linewidth]{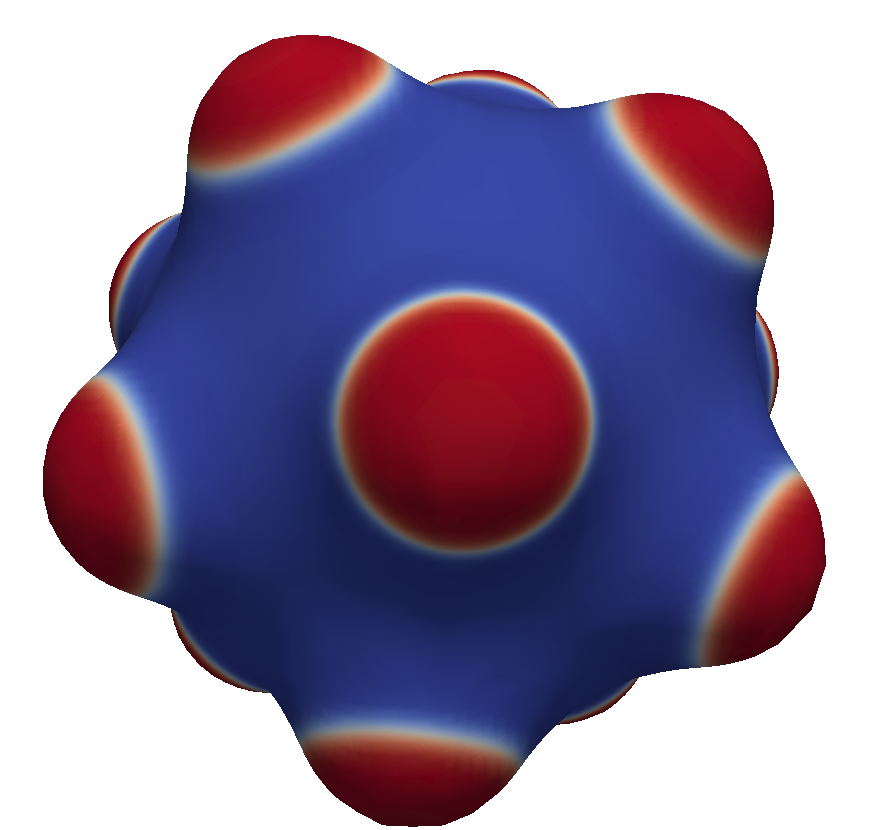}
			\caption{$\Lambda=5$}
		\end{subfigure}
		\begin{subfigure}{.3\linewidth}
			\centering
			\includegraphics[width=\linewidth]{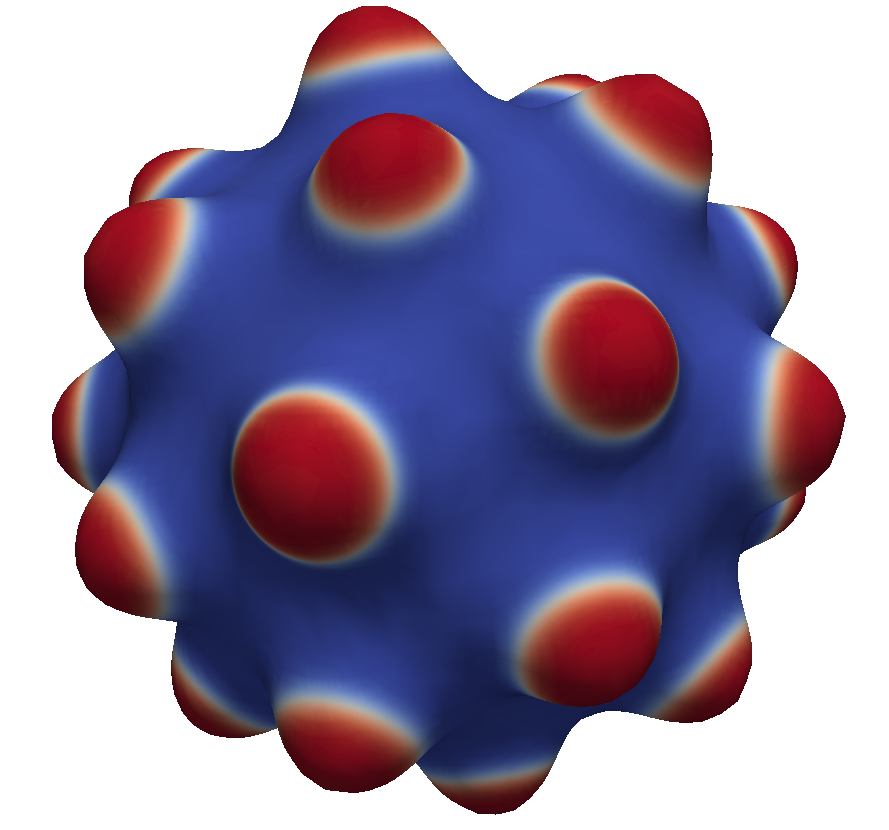}
			\caption{$\Lambda=10$}
		\end{subfigure}}
\centering
\caption{Almost stationary discrete solutions for varying the coupling coefficient $\Lambda$.}
\label{fig:Coupling}\end{figure}
\begin{table}
	\caption{} \label{table:Lambda}
	\begin{minipage}{\textwidth}
		\tabcolsep=8pt
		\begin{tabular}{cccc}
			\hline\hline
			{Figure \ref{fig:Coupling}} & {$\Lambda$} & {\# of lipid rafts}
			& {$\mathcal{E}_h$} \\
			\hline
			(a) & 0	& 1 & 5.1910 \\
			(b) & -0.5  & 1 & 6.3583 \\
			(c) & -5 	& 12 & 66.9928 \\
			(d) & -10  & 26 & 204.9876 \\ 
			(e) & 0.5  & 1 & 6.3583 \\ 
			(f) & 5 	& 12 & 66.9928 \\
			(g) & 10  & 26 & 204.9876 \\
			\hline\hline
		\end{tabular}
	\end{minipage}
\end{table}
\subsubsection{Line tension, $b$}
Similarly, we would expect that increasing the line tension $b$ would decrease the length of the interface, and hence decrease the number of lipid rafts. This agrees with the observed behaviours illustrated in Figure \ref{fig:b}. Further details are given in Table \ref{table:b}. In Figure  \ref{fig:b} (d) we observe that $u$ is positive both in the lipid raft domain but also antipodal to this. This slightly strange behaviour arises from the fact that we have removed the components of the normal $\nu_i$ from $u$. 
\begin{figure}
	\centering
	\begin{subfigure}{.24\linewidth}
		\centering
		\includegraphics[width=\linewidth]{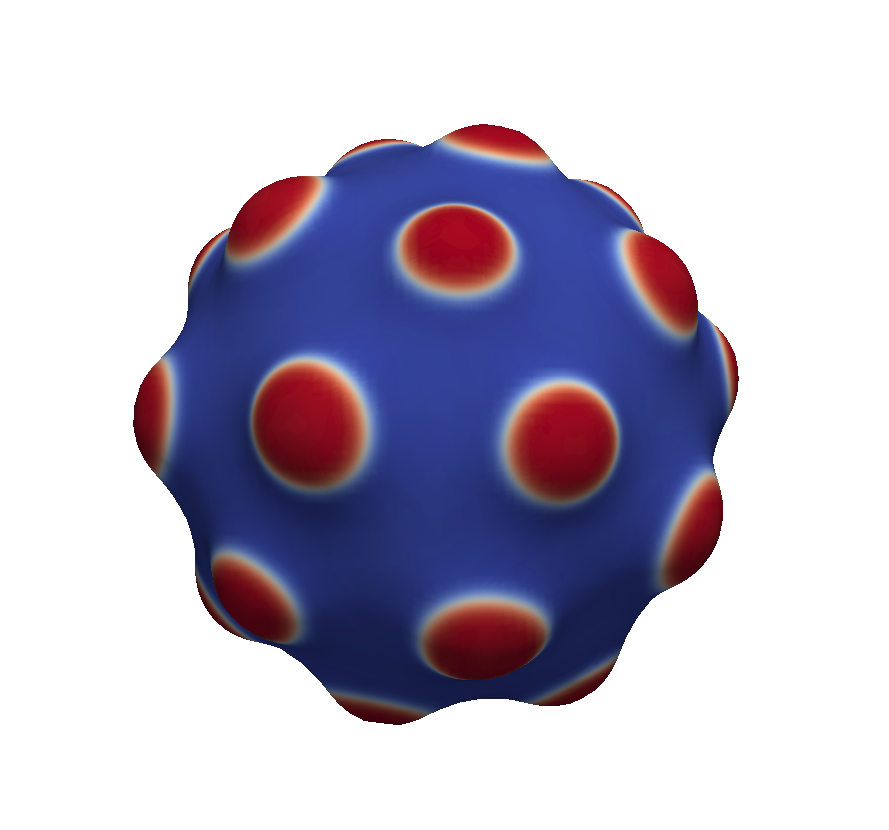}
		\caption{$b=0.2$}
	\end{subfigure}
	\begin{subfigure}{.24\linewidth}
		\centering
		\includegraphics[width=\linewidth]{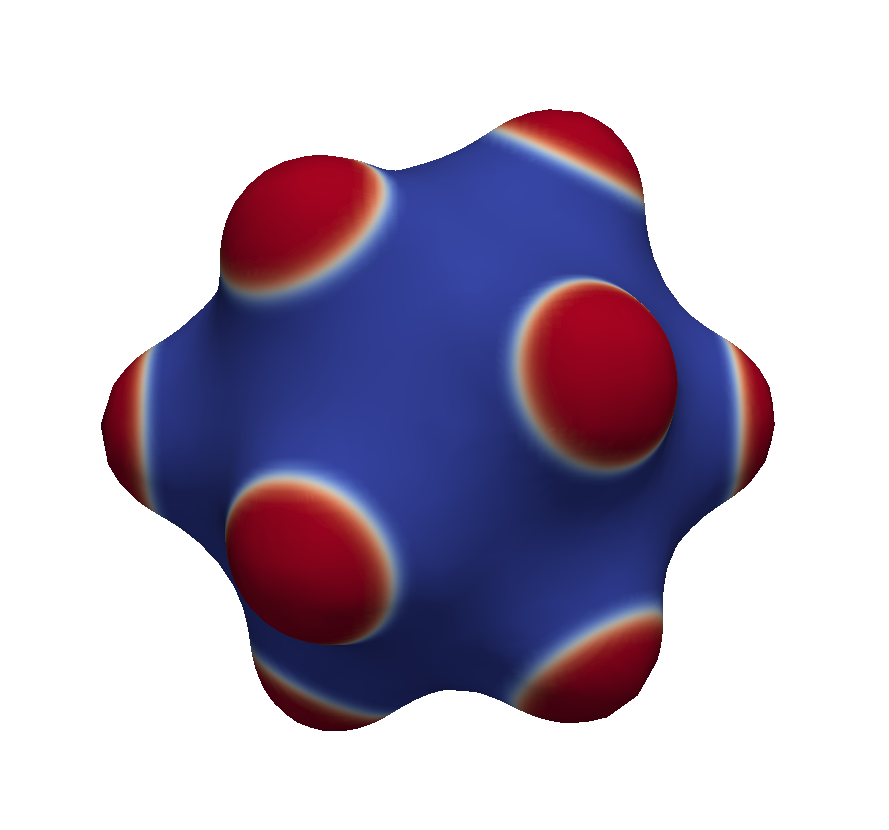}
		\caption{$b=1$}
	\end{subfigure}
	\begin{subfigure}{.24\linewidth}
		\centering
		\includegraphics[width=\linewidth]{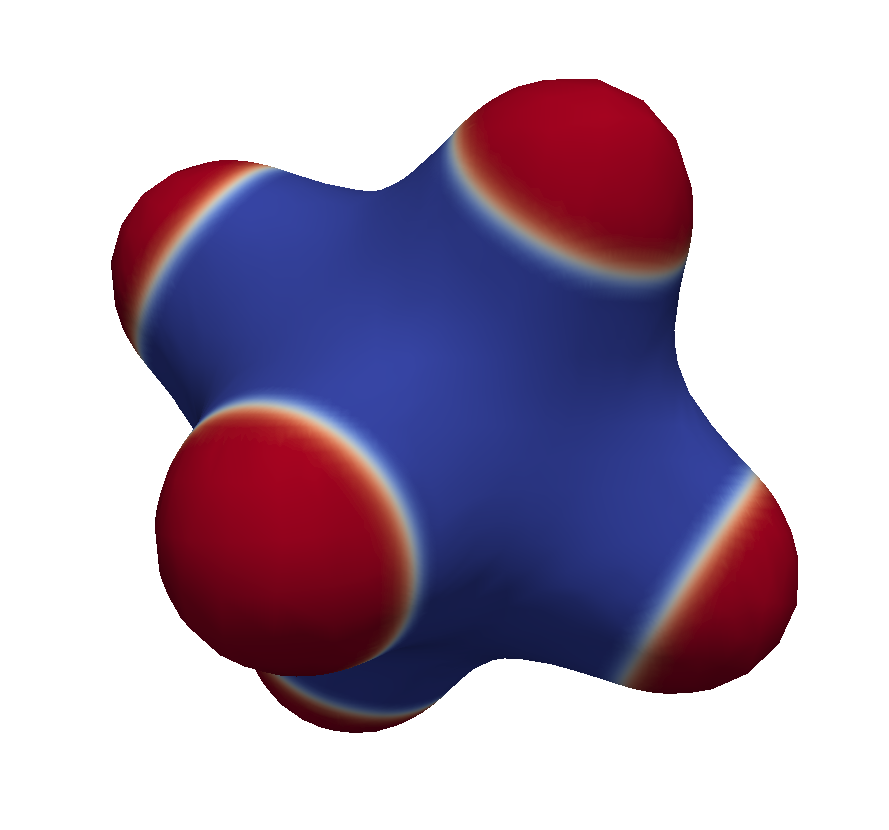}
		\caption{$b=2.5$}
	\end{subfigure}
	\begin{subfigure}{.24\linewidth}
		\centering
		\includegraphics[width=\linewidth]{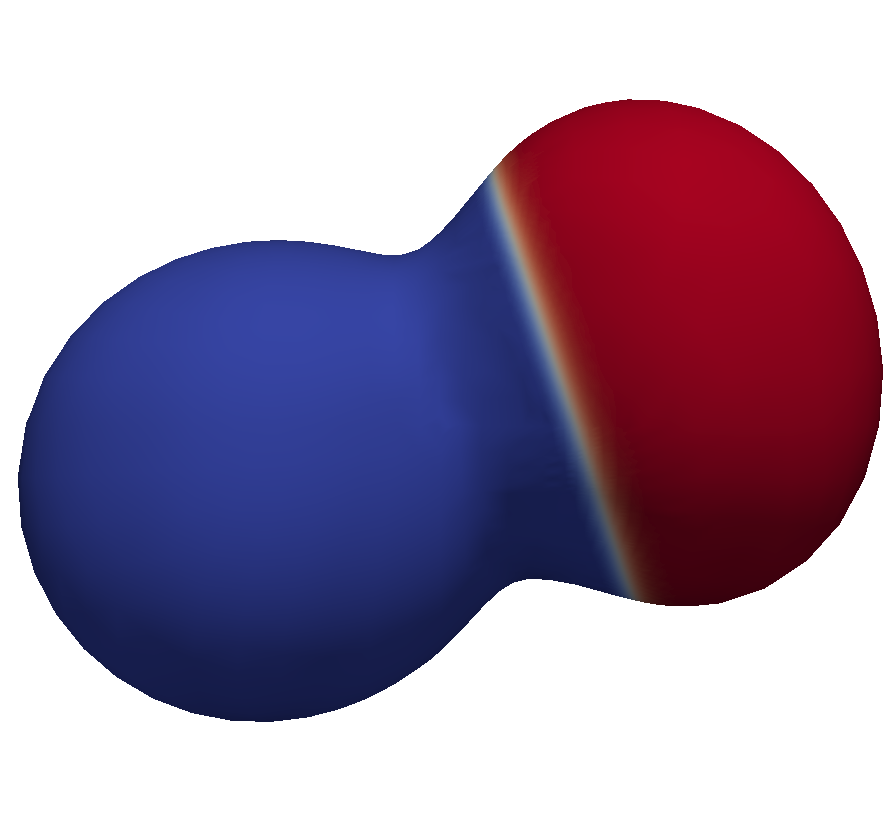}
		\caption{$b=50$}
	\end{subfigure}
	\caption{Almost stationary discrete solutions for varying the line tension term $b$.}
	\label{fig:b}
\end{figure}
\begin{table}
	\caption{} \label{table:b}
	\begin{minipage}{\textwidth}
		\tabcolsep=8pt
		\begin{tabular}{cccc}
			\hline\hline
			{Figure \ref{fig:b}} & {$b$} & {\# of lipid rafts}
			& {$\mathcal{E}_h$} \\
			\hline
			(a) & 0.2	& 26 & 49.9075 \\
			(b) & 1  & 12 & 66.9928 \\
			(c) & 2.5 	& 6 & 88.0572 \\
			(d) & 50  & 1 & 376.0834 \\
			\hline\hline
		\end{tabular}
	\end{minipage}
\end{table}
\subsubsection{Mean value of $\phi$}
Figure \ref{fig:alpha} shows the effect of varying the mean value of $\phi$, with both stripe and circular raft behaviour observed, as well as no phase separation. Further details are given in  Table \ref{table:alpha}. Although Figure \ref{fig:alpha} (a) is almost stationary, its non-symmetric nature is suggestive that this is not a local minimiser.
\begin{figure}
\centering
\begin{subfigure}{.24\linewidth}
	\centering
	\includegraphics[width=\linewidth]{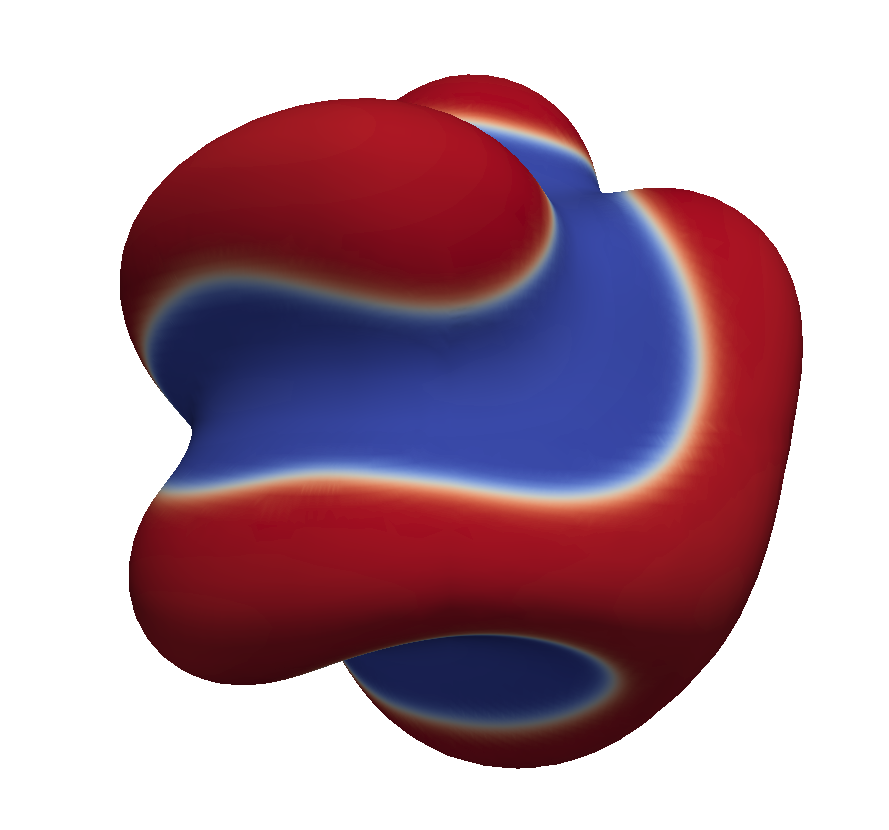}
	\caption{$\alpha=0$}
\end{subfigure}
\begin{subfigure}{.24\linewidth}
	\centering
	\includegraphics[width=\linewidth]{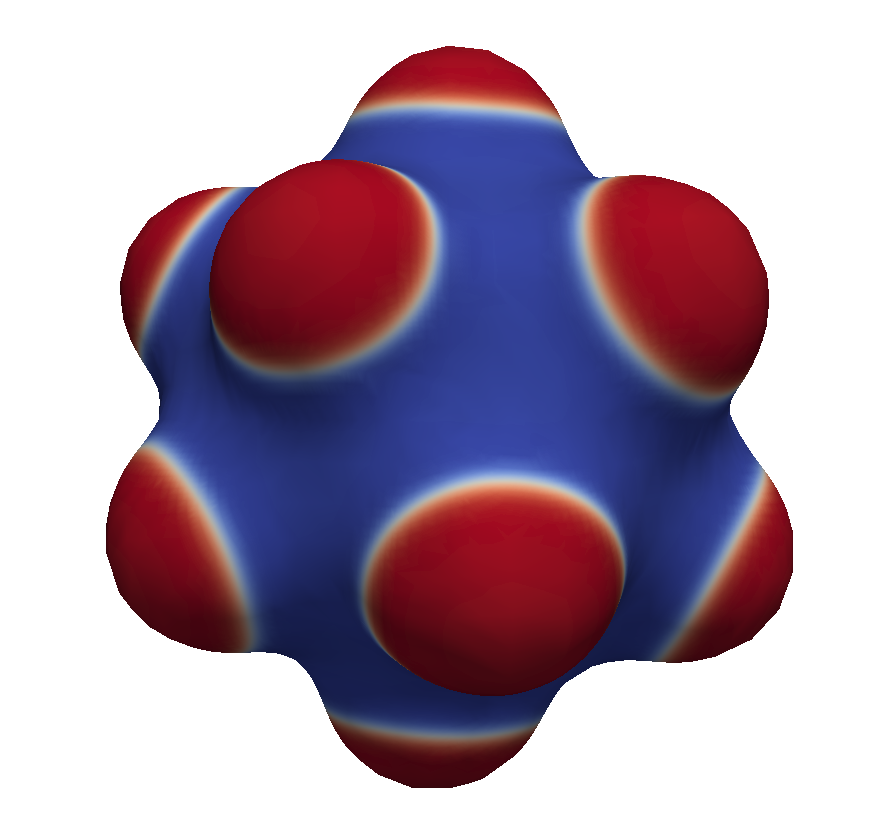}
	\caption{$\alpha=-0.25$}
\end{subfigure}
\begin{subfigure}{.24\linewidth}
	\centering
	\includegraphics[width=\linewidth]{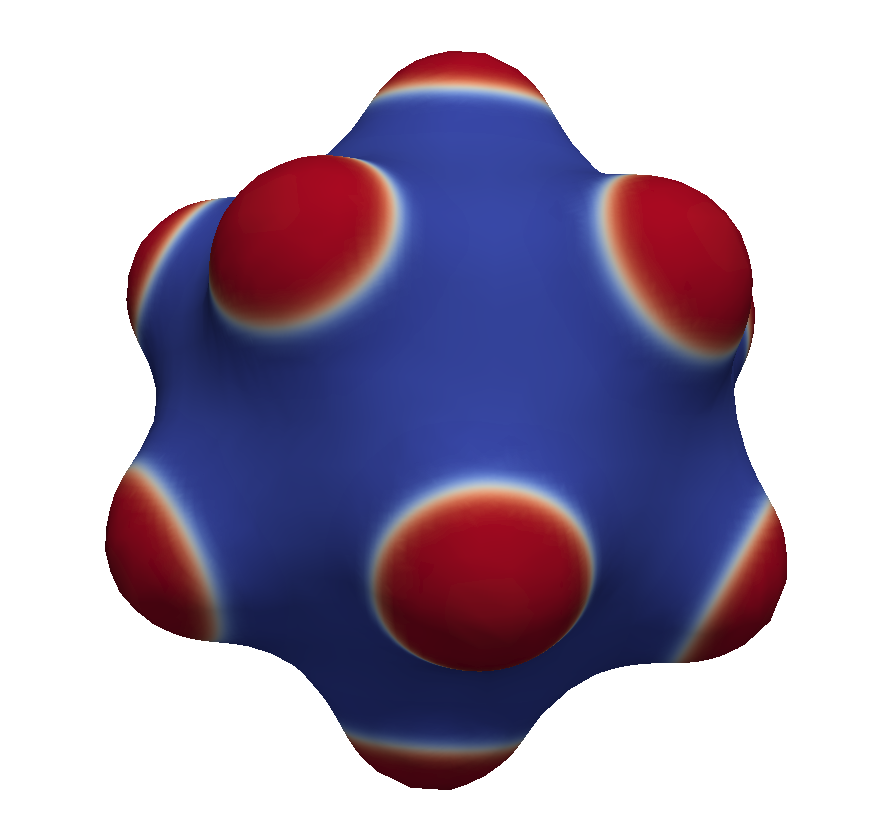}
	\caption{$\alpha=-0.5$}
\end{subfigure}
\begin{subfigure}{.24\linewidth}
	\centering
	\includegraphics[width=\linewidth]{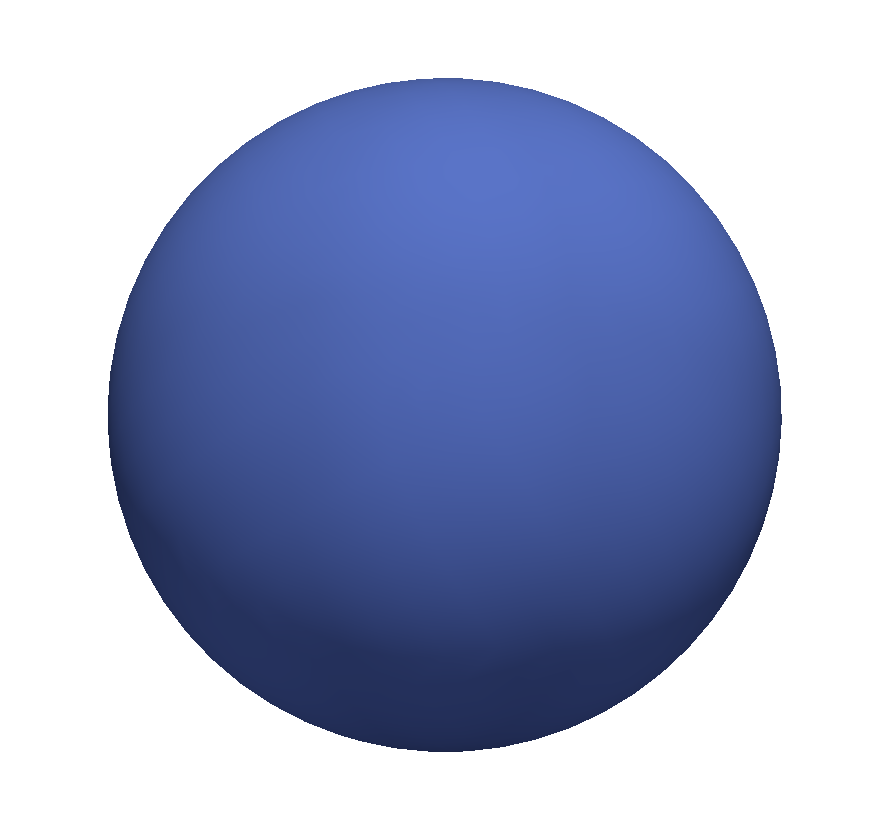}
	\caption{$\alpha=-0.75$}
\end{subfigure}
	\caption{Almost stationary discrete solutions for varying $\alpha$ - the mean value of the order parameter $\phi$.}
	\label{fig:alpha}
\end{figure}
\begin{table}
	\caption{} \label{table:alpha}
	\begin{minipage}{\textwidth}
		\tabcolsep=8pt
		\begin{tabular}{cccc}
			\hline\hline
			{Figure \ref{fig:alpha}} & {$\alpha$} & {\# of lipid rafts}
			& {$\mathcal{E}_h$} \\
			\hline
			(a) & 0	& - & 35.5574 \\
			(b) & -0.25  & 12  & 44.2027 \\
			(c) & -0.5 	&  12& 66.9928 \\
			(d) & -0.75  & - & 118.0643 \\
			\hline\hline
		\end{tabular}
	\end{minipage}
\end{table}
\subsubsection{Surface tension, $\sigma$}
Figure \ref{fig:sigma} shows the effect of varying the surface tension $\sigma$, with increasing $\sigma$ corresponding to increasing numbers of lipid rafts. Further details are given in  Table \ref{table:sigma}. Since in the case $\sigma=0$, there is not a unique solution to \eqref{secant2}, we used a nullspace method from PETSc to enforce that $\int u=0$.
\begin{figure}
	\centering
	\begin{subfigure}{.24\linewidth}
		\centering
		\includegraphics[width=\linewidth]{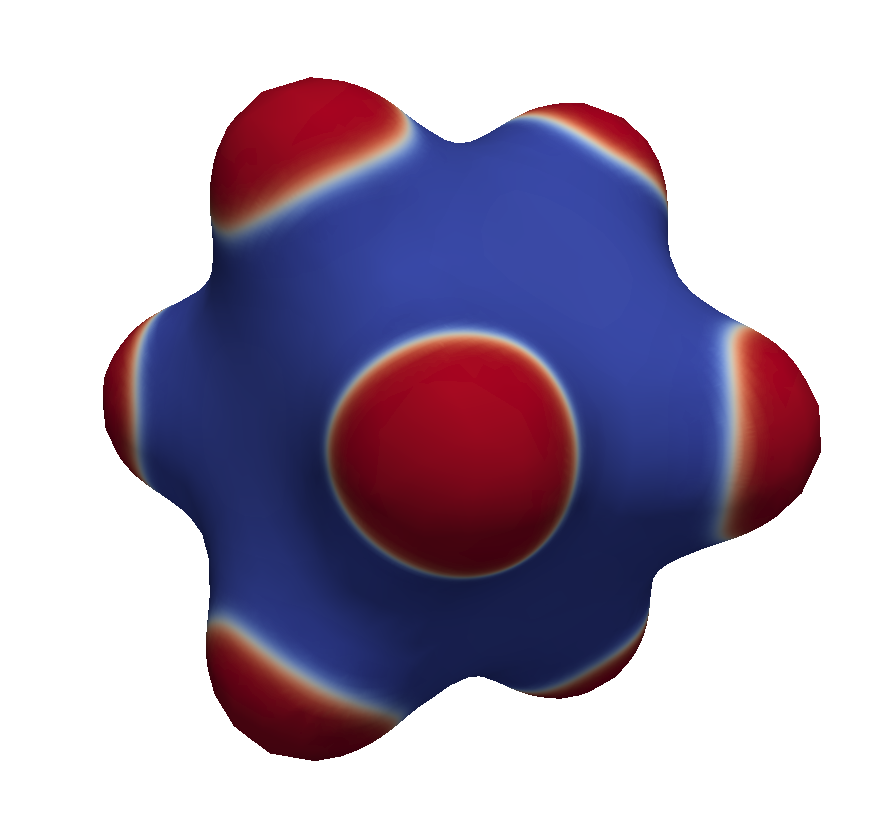}
		\caption{$\sigma=0$}
	\end{subfigure}
	\begin{subfigure}{.24\linewidth}
		\centering
		\includegraphics[width=\linewidth]{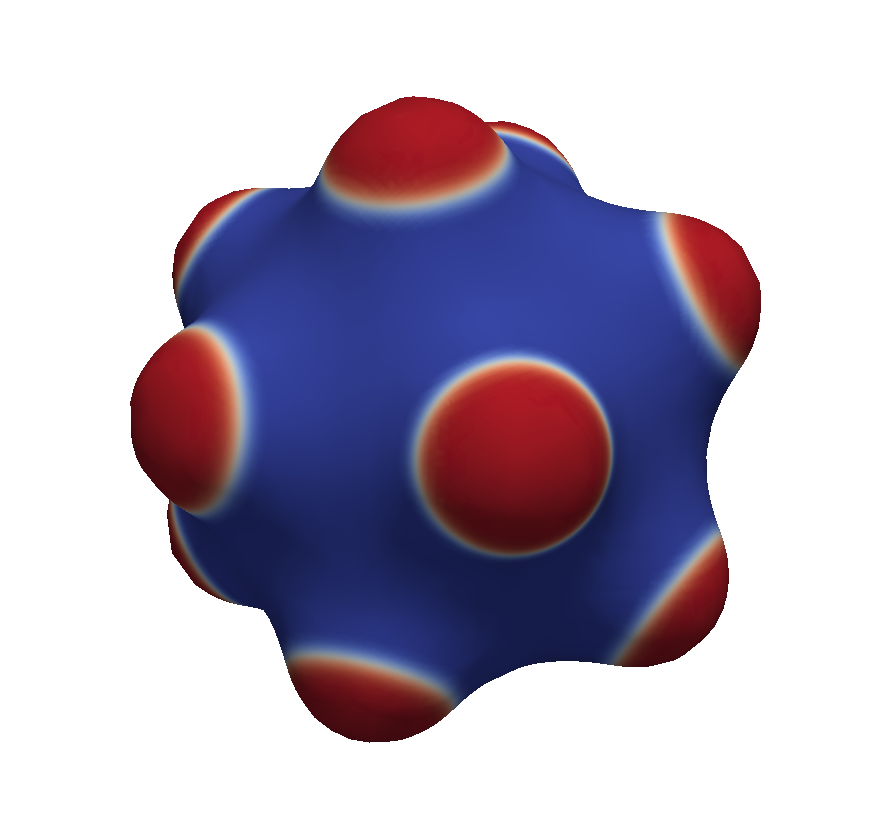}
		\caption{$\sigma=1$}
	\end{subfigure}
	\begin{subfigure}{.24\linewidth}
		\centering
		\includegraphics[width=\linewidth]{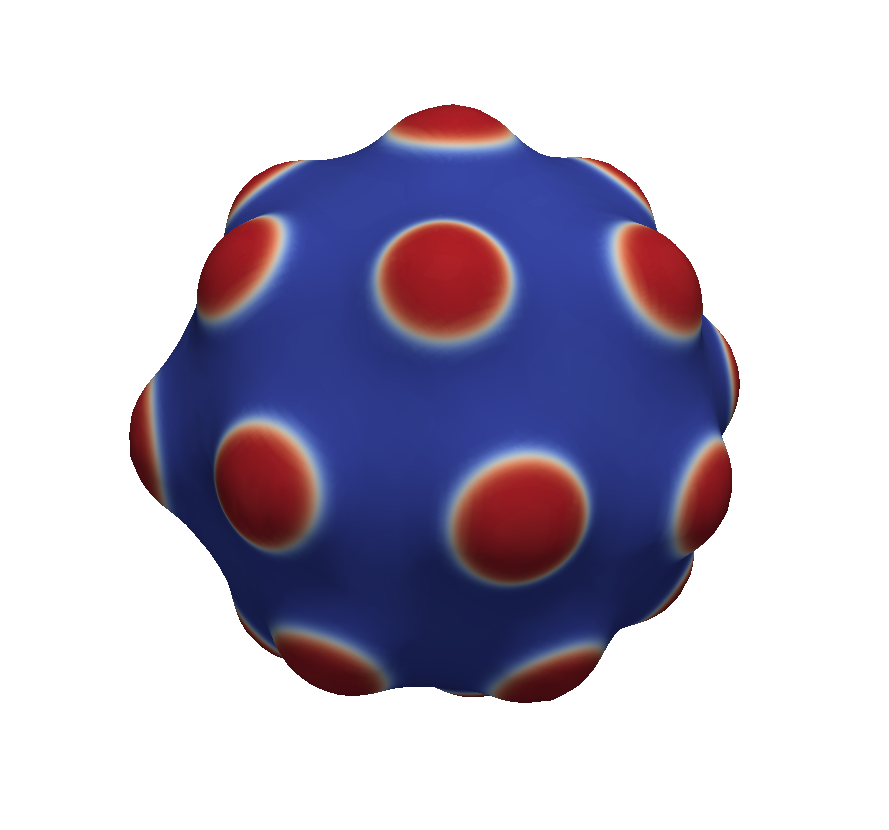}
		\caption{$\sigma=10$}
	\end{subfigure}
	\caption{Almost stationary discrete solutions for varying $\sigma$ - the surface tension.}
	\label{fig:sigma}
\end{figure}
\begin{table}
	\caption{} \label{table:sigma}
	\begin{minipage}{\textwidth}
		\tabcolsep=8pt
		\begin{tabular}{cccc}
			\hline\hline
			{Figure \ref{fig:sigma}} & {$\sigma$} & {\# of lipid rafts}
			& {$\mathcal{E}_h$} \\
			\hline
			(a) & 0	& 8 & 64.0906 \\
			(b) & 1  &12  & 66.9928 \\
			(c) & 10 	& 23 & 79.1846 \\
			\hline\hline
		\end{tabular}
	\end{minipage}
\end{table}
\subsubsection{Bending rigitity, $\kappa$}
Figure \ref{fig:kappa} illustrates the effect of varying the bending rigidity $\kappa$. We observe that increasing $\kappa$ leads to an increase in the number of lipid rafts.  Further details are given in  Table \ref{table:kappa}.
\begin{figure}
\centering
\begin{subfigure}{.24\linewidth}
	\centering
	\includegraphics[width=\linewidth]{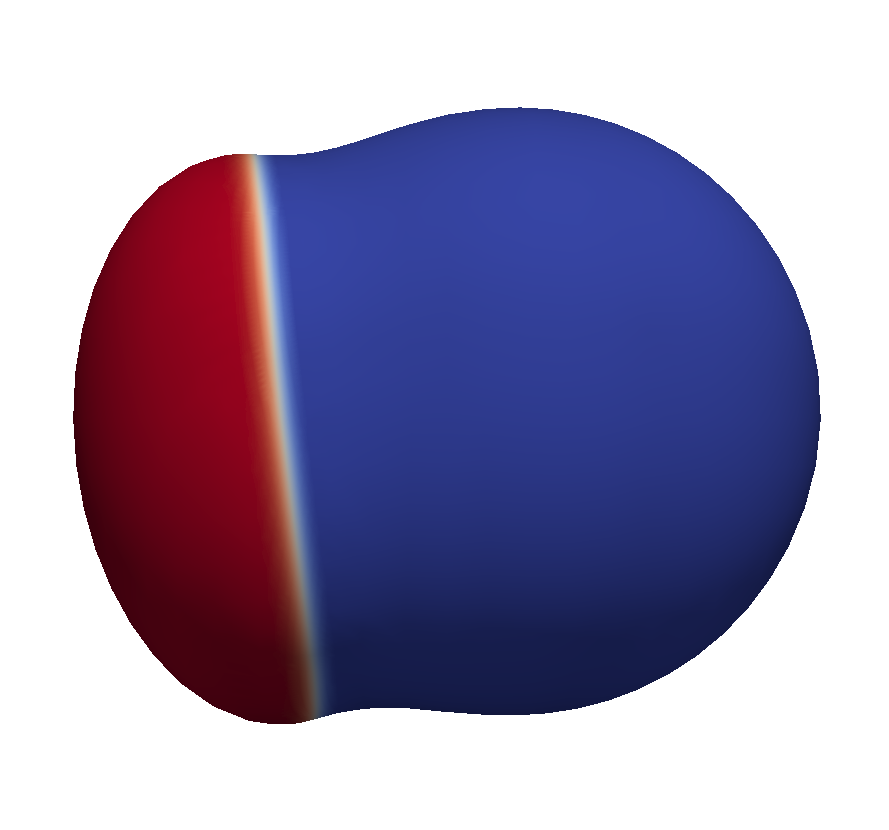}
	\caption{$\kappa=0.05$}
\end{subfigure}
\begin{subfigure}{.24\linewidth}
	\centering
	\includegraphics[width=\linewidth]{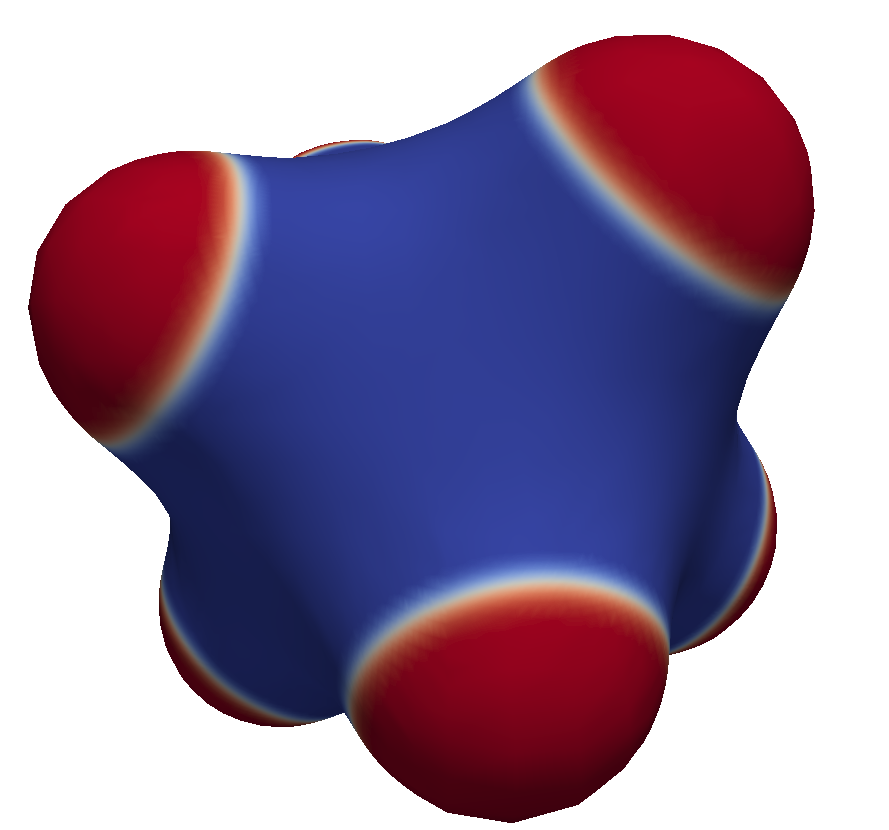}
	\caption{$\kappa=0.1$}
\end{subfigure}
\begin{subfigure}{.24\linewidth}
	\centering
	\includegraphics[width=\linewidth]{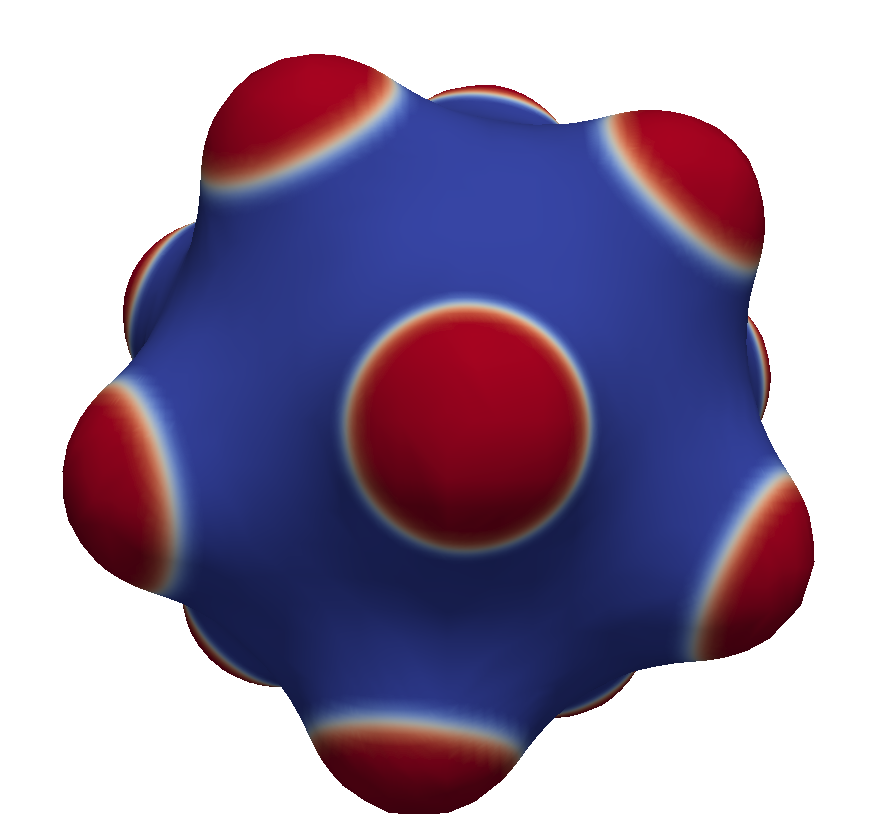}
	\caption{$\kappa=1$}
\end{subfigure}
\begin{subfigure}{.24\linewidth}
	\centering
	\includegraphics[width=\linewidth]{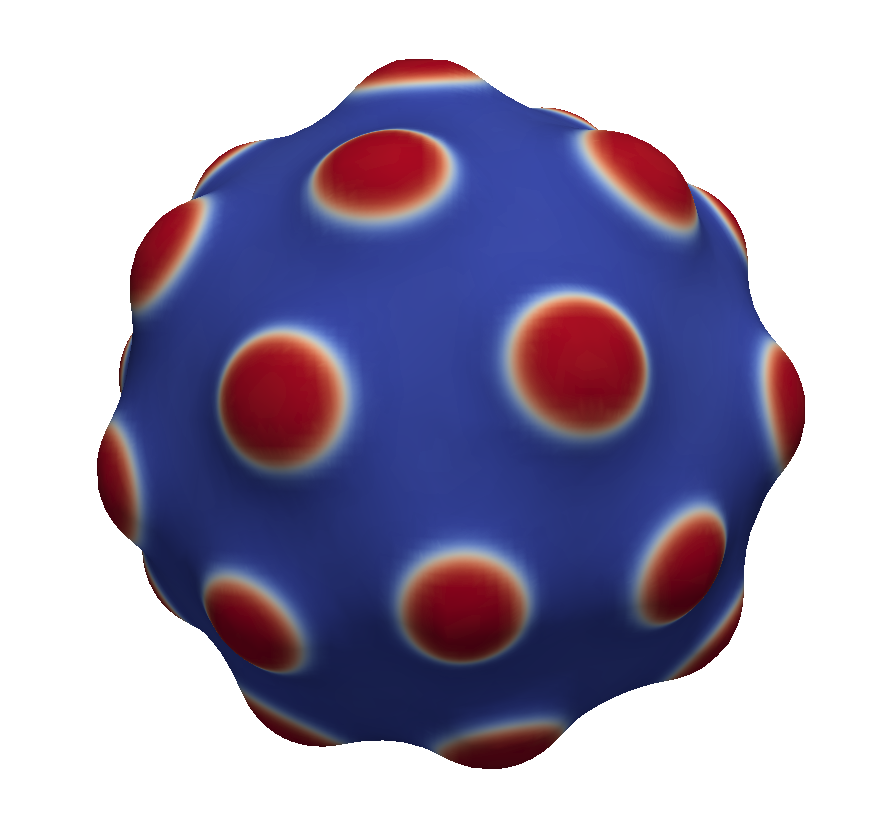}
	\caption{$\kappa=10$}
\end{subfigure}
\caption{Almost stationary discrete solutions for varying the bending rigidity $\kappa$.}
\label{fig:kappa}
\end{figure}
\begin{table}
	\caption{} \label{table:kappa}
	\begin{minipage}{\textwidth}
		\tabcolsep=8pt
		\begin{tabular}{cccc}
			\hline\hline
			{Figure \ref{fig:kappa}} & {$\kappa$} & {\# of lipid rafts}
			& {$\mathcal{E}_h$} \\
			\hline
			(a) & 0.05	& 1 & 16.9889 \\
			(b) & 0.1  & 6 & 37.4941 \\
			(c) &  1	& 12 & 66.9928 \\
			(d) &  10 & 30 & 440.1609 \\
			\hline\hline
		\end{tabular}
	\end{minipage}
\end{table}

\section{Outlook}
The relationship of the diffuse interface approach considered here and a sharp interface problem via asymptotics will be considered in a work in preparation by the authors. Another interesting direction to consider would be a phase-dependent bending rigidity for the Gauss curvature within this perturbation approach, and the exploration of whether this could be sufficient to produce raft like regions as well.
\acknowledgements{
The work of CME was partially supported by the Royal Society via a Wolfson Research Merit Award. 
The research of LH was funded by the Engineering and Physical Sciences Research Council grant  EP/H023364/1
under the MASDOC centre for doctoral training at the University of Warwick.}

\appendix
\newpage

\label{lastpage}
\end{document}